\documentclass[11pt]{amsart}

\usepackage[T1]{fontenc}
\usepackage[utf8]{inputenc}
\usepackage{lmodern}
\usepackage{microtype}
\usepackage{amssymb,amsfonts,mathtools}
\usepackage{mathrsfs}
\usepackage{enumitem}
\usepackage{hyperref}
\usepackage[nameinlink,noabbrev]{cleveref}

\hypersetup{
  colorlinks=true,
  linkcolor=blue,
  citecolor=blue,
  urlcolor=blue
}

\numberwithin{equation}{section}

\newtheorem{theorem}{Theorem}[section]
\newtheorem{proposition}[theorem]{Proposition}
\newtheorem{lemma}[theorem]{Lemma}

\theoremstyle{definition}
\newtheorem{definition}[theorem]{Definition}

\theoremstyle{remark}
\newtheorem{remark}[theorem]{Remark}

\newcommand{\RR}{\mathbb{R}}
\newcommand{\TT}{\mathbb{T}}
\newcommand{\ZZ}{\mathbb{Z}}
\newcommand{\NN}{\mathbb{N}}
\newcommand{\PP}{\mathbb{P}}
\newcommand{\EE}{\mathbb{E}}
\newcommand{\SL}{\mathrm{SL}}

\newcommand{\Prob}{\mathrm{Prob}}
\newcommand{\supp}{\mathrm{supp}}

\newcommand{\coc}{\mathcal{G}}

\newcommand{\cF}{\mathcal{F}}

\newcommand{\cM}{\mathcal{M}}

\begin{document}

\title[Lyapunov positivity and monodromy criteria]
{Lyapunov Positivity and Monodromy Criteria for Moving-Law Random--Quasiperiodic Cocycles}

\author[Z. Liu]{Zhongmin Liu}
\address{College of Science, National University of Defense Technology, Changsha, 410073, PR China}
\email{1254642090@qq.com}

\author[D. Chen]{Dengdi Chen}
\address{College of Science, National University of Defense Technology, Changsha, 410073, PR China}
\email{dengdichen@yeah.net}

\author[Y. Zheng]{Yan Zheng}
\address{College of Science, National University of Defense Technology, Changsha, 410073, PR China}
\email{yanzhengyl@163.com}
\thanks{Corresponding author: Yan Zheng.}
\thanks{This work was supported by the National Natural Science Foundation of China under Grant 12571189.}

\subjclass[2020]{37H15, 37A20, 37A25, 60B20, 47B36}
\keywords{moving-law random--quasiperiodic cocycles, Lyapunov exponents,
Furstenberg criterion, monodromy criterion, Schr\"odinger cocycles}
\date{\today}

\begin{abstract}
We study moving-law random-environment matrix cocycles whose one-step
quasiperiodic cocycle law varies stationarily along an ergodic environment
orbit.  Two fixed-law structures then cease to suffice: a single stationary
projective measure does not record transport between successive environment
fibers, and pooling the moving supports may create products that are not
dynamically admissible along any environment orbit.  We replace the first by
environment-indexed stationary projective families and prove an
environment-wise Furstenberg formula.  When the top Lyapunov exponent
vanishes, averaged stationarity upgrades to exact equivariance at the support
level,
\(g_*\eta^\omega=\eta^{\Theta\omega}\), which forces either an
environment-wise compact reduction or a finite support-invariant family of
nonzero proper measurable subbundles.  Environment-wise noncompactness and
environment-wise strong irreducibility therefore imply positivity.  To
address the second obstruction, we compare admissible products along matched
environment segments with equal total torus translation. Thus  their quotients are
genuine returns on a common vector fiber and generate environment-matched
monodromy groups.  Stable noncompactness and stable strong irreducibility of
these groups verify the cocycle-level hypotheses.  Applied to stationarily
varying random--quasiperiodic Schr\"odinger cocycles, the framework yields
strict positivity of the top Lyapunov exponent at every real energy under the
stated assumptions.
\end{abstract}

\maketitle

\section{Introduction}

Lyapunov exponents quantify the asymptotic exponential growth rates of linear
cocycles.  A positive top Lyapunov exponent means that typical long products
expand some directions exponentially.  Criteria for this positivity are
therefore central in random matrix products, linear cocycles, random dynamical
systems, and the transfer-matrix analysis of one-dimensional Schr\"odinger
operators \cite{FK60,Oseledets68,Viana14,BL85}.

For products generated by a single probability law, Furstenberg's theory
relates positivity of the top Lyapunov exponent to stationary measures on projective space and to the
geometry of the group generated by the support \cite{Fur63,FK83,BL85}.
Cai, Duarte, and Klein (CDK) extended this framework to fixed-law mixed
random--quasiperiodic cocycles \cite{CDK22,CDK23}.  In their model, a
quasiperiodic cocycle element \(g=(\alpha,A)\) shifts the torus phase by
\(\alpha\) and acts on the vector fiber through the phase-dependent matrix
\(A\), and the cocycle elements are sampled independently across time from a
single compactly supported law \(\nu\).  They prove a Furstenberg-type
positivity criterion under ergodicity, noncompactness, and strong
irreducibility, and introduce a monodromy mechanism that provides more readily
verifiable sufficient conditions for the latter two hypotheses.

Discrete Schr\"odinger operators provide a principal motivation for CDK's
mixed random--quasiperiodic framework.  CDK apply the preceding
Furstenberg--monodromy mechanism to the associated dynamics
\[
        \theta_{n+1}=\theta_n+\alpha_n,
        \qquad
        A_{E,\zeta_n}(\theta_n)
        =
        \begin{pmatrix}
        v(\theta_n)+\zeta_n-E&-1\\
        1&0
        \end{pmatrix}.
\]
Here
\(\theta_n\) is the phase, \(\alpha_n\) the frequency or torus translation,
\(v\) the quasiperiodic background potential, \(\zeta_n\) the random
perturbation, and \(E\) the energy.  In CDK's model, successive
frequency--noise pairs \((\alpha_n,\zeta_n)\) are sampled independently across
time from a single fixed joint law on \(\mathbb T^d\times\mathbb R\).
Products of the above matrices propagate the two-component solution data of the
associated Schr\"odinger eigenvalue equation, providing the standard
connection between cocycle growth and the spectral theory of one-dimensional
Schr\"odinger operators.

CDK's fixed-law i.i.d. framework, however, does not cover situations in which
an external ergodic environment changes the admissible one-step law over time.
Environment-driven temporal variation has precedents in random dynamical
systems and in work on random environments, Markov-dependent products, and
independent non-identically distributed matrix products
\cite{Arnold98,Kif01,Goldsheid22,GK26}.  In a finite-state Markov mixed
random--quasiperiodic setting, Tall \cite{Tall26} likewise retains the current
Markov state through a finite family of conditional projective measures while
studying the Lyapunov exponent as a function of transition probabilities,
with the cocycle maps, translations, and support graph fixed.  These works
treat important forms of temporal dependence.  Here we consider a mixed
random--quasiperiodic model in which a general ergodic environment selects the
one-step law at each time and the corresponding supports may vary along the
environment orbit.

We now introduce the moving-law random-environment setting.  Here and
throughout, ``moving-law'' means stationary variation driven by the ergodic
environment, not arbitrary deterministic nonstationarity.  Let
\((\Omega,\cF,\PP,\Theta)\) be an invertible, ergodic,
probability-preserving environment system.  Each state \(\omega\) determines
a one-step cocycle law \(\nu^\omega\) on the quasiperiodic cocycle group
\(G=\coc(d,r)\), so time \(n\) uses \(\nu^{\Theta^n\omega}\).  Conditioned
on the initial state \(\omega\), the path law is
\[
        \mathbf P^\omega
        =
        \bigotimes_{n\ge0}\nu^{\Theta^n\omega}.
\]
Thus the environment orbit determines the one-step law at each time, and
\(\mathbf P^\omega\) governs the resulting random sequence of 
quasiperiodic cocycle elements whose time-ordered compositions generate
the moving-law random-environment matrix cocycle.  In the
Schr\"odinger specialization, the measurable family
\(\omega\mapsto\kappa^\omega\in\Prob_c(\TT^d\times\RR)\) assigns to each
environment the joint law of the frequency--noise pair.  Conditioned on the
initial environment  \(\omega\), the pair at time \(n\) is sampled from
\(\kappa^{\Theta^n\omega}\), whose push-forward under
\((\alpha,\zeta)\mapsto(\alpha,A_{E,\zeta})\) is the energy-dependent
one-step cocycle law \(\nu_E^{\Theta^n\omega}\).  Thus CDK's
single fixed joint law is replaced by an environment-indexed family varying
stationarily along the environment orbit.  This variation affects both the
projective stationary structure and the support products that can occur along
the dynamics.

At the projective level, a single stationary measure cannot record how
projective data pass from the \(\omega\)-fiber to the
\(\Theta\omega\)-fiber.  The appropriate object is a measurable family
\(\eta^\bullet=\{\eta^\omega\}_{\omega\in\Omega}\) satisfying averaged stationarity:
\[
        \eta^{\Theta\omega}
        =
        \int_G g_*\eta^\omega\,d\nu^\omega(g)
        \qquad\text{for }\PP\text{-a.e. }\omega .
\]
We prove an environment-wise Furstenberg formula that recovers the variational
characterization of the top Lyapunov exponent over this family domain.  When
the exponent vanishes, however, the formula only identifies the variational maximum
as zero.  A separate regularization-and-rigidity argument produces a
stationary projective family and a full-measure environment set on which
\(g_*\eta^\omega=\eta^{\Theta\omega}\) for every
\(g\in\supp\nu^\omega\).  Thus CDK's invariance of one measure under the group
generated by a fixed support is replaced by exact transport across successive
environment fibers.  This exact equivariance forces at least one of two
degeneracies: an environment-wise compact reduction or a finite
support-invariant family of nonzero proper measurable subbundles.  Under the
standing mixed-base ergodicity and logarithmic-integrability assumptions,
environment-wise noncompactness and environment-wise strong irreducibility
exclude these alternatives, respectively, and yield the environment-wise positivity criterion of Theorem~\ref{thm:env-positivity}.  

At the support level, the admissible cocycle factor at time \(j\) is
constrained by \(\supp\nu^{\Theta^j\omega}\), and  pooling the moving supports may
therefore create formal products that occur along no admissible environment
orbit.  We compare only products drawn along the same forward or backward
environment segment, so they have the same environment endpoints and follow
the same ordered sequence of supports.  We additionally require equal total
torus translation, so their phase endpoints agree.  The relative quotient is
then a genuine return on a common vector fiber.  These returns generate the
environment-matched equal-total monodromy groups.  Stable noncompactness and
stable strong irreducibility of the resulting groups provide the cocycle-level
conditions required by Theorem~\ref{thm:env-positivity}.  Applied to
stationarily varying random--quasiperiodic Schr\"odinger cocycles, the current
and preceding environments supply lower- and upper-unipotent returns,
respectively, and environment matching places these witnesses in the same
environment-matched equal-total monodromy group.  The criterion then yields strict
positivity of the top Lyapunov exponent at every real energy under the stated
hypotheses.  In the fixed-law specialization, environment matching is
automatic, and the criterion recovers the corresponding all-energy positivity
conclusions of CDK \cite[Propositions~4.1 and~4.4]{CDK23} as special cases,
thereby extending those positivity results to stationarily varying laws.

The remainder of the paper is organized as follows.  \Cref{sec:model} develops
the moving-law random-environment matrix cocycle, the translation factor,
mixed-base ergodicity, and the top Lyapunov exponent.
\Cref{sec:stationary-families} treats stationary projective families, the Haar
torus marginal, and the environment-wise Furstenberg formula.
\Cref{sec:exact-equivariance} proves exact equivariance, the degeneracy
alternative, and the environment-wise positivity criterion.
\Cref{sec:return-monodromy} constructs environment-matched equal-total word
quotients and monodromy groups and proves the monodromy exclusion and
positivity criteria.  \Cref{sec:schrodinger} gives the stationarily varying
random--quasiperiodic Schr\"odinger application.

\section{Moving-Law Random-Environment Matrix Cocycle Model}
\label{sec:model}

This section gives the formal definition of the moving-law random-environment
matrix cocycle.  Each sampled cocycle element has a torus translation
component, which advances the phase, and a phase-dependent matrix component,
which acts on the vector fiber.  Their sampled sequence defines the mixed
random--quasiperiodic structure, while the one-step cocycle law changes with
the environment state.  We also isolate the translation factor, which is the only
obstruction to uniqueness of the Haar torus family and hence to the Haar torus
marginal statement used in Section~\ref{sec:stationary-families}.

\subsection{Cocycle group, conditional path laws, and the mixed base}

Throughout the paper, \((\Omega,\cF,\PP)\) is a standard Borel probability
space, and
\(\Theta:\Omega\to\Omega\)
is a bimeasurable invertible probability-preserving transformation.  Unless
explicitly stated otherwise, we assume environment ergodicity, meaning that
\(\Theta\) is ergodic.  

Set
\(
        G:=\coc(d,r)=\TT^d\times C^0(\TT^d,\SL_r(\RR)).
\)

\begin{definition}[Cocycle group and coordinates]
\label{def:cocycle-group-convention}
Equip \(C^0(\TT^d,\SL_r(\RR))\) with the uniform topology and \(G\) with the
product topology.  For \(g=(\alpha,A)\in G\), write
\(\mathfrak a(g):=\alpha, \ \mathcal A(g):=A\).
Thus
\(\mathfrak a:G\to\TT^d, \ \mathcal A:G\to C^0(\TT^d,\SL_r(\RR))\)
are the coordinate projections.

For \(g=(\alpha,A)\) and \(h=(\beta,B)\), define
\[
        g\circ h
        :=
        \bigl(\alpha+\beta,(A\circ\tau_\beta)B\bigr),
        \qquad
        \tau_\beta(\theta):=\theta+\beta .
\]
Equivalently,
\(\mathcal A(g\circ h)(\theta) = A(\theta+\beta)B(\theta)\).
The identity is \(e=(0,I)\), where \(I\) is the constant identity matrix, and
\[
        g^{-1}
        =
        \bigl(-\alpha,A^{-1}\circ\tau_{-\alpha}\bigr),
        \qquad
        A^{-1}(\theta):=A(\theta)^{-1}.
\]
With these operations, \(G\) is a Polish topological group.
\end{definition}

Let
\(\omega\longmapsto \nu^\omega\in\Prob(G)\)
be a measurable Borel probability kernel.  At environment state \(\omega\), the
one-step cocycle law is \(\nu^\omega\). After one step, the environment is
\(\Theta\omega\).  Hence the conditional path law starting from \(\omega\) is
\[
        \mathbf P^\omega
        :=
        \bigotimes_{n\ge0}\nu^{\Theta^n\omega}
\]
on \(G^{\NN}\).  Since \(G\) is Polish and the one-step laws form a
measurable probability kernel, the Ionescu--Tulcea product-kernel theorem
\cite[Theorem~8.24]{Kallenberg21} gives a measurable kernel
\(\omega\mapsto\mathbf P^\omega\).

Let \(\sigma\) be the left shift on \(G^{\NN}\).  The mixed-base map is
\[
        \mathfrak F:
        \Omega\times G^{\NN}\times\TT^d
        \longrightarrow
        \Omega\times G^{\NN}\times\TT^d
\]
defined by
\(
        \mathfrak F(\omega,\xi,\theta)
        :=
        (\Theta\omega,\sigma\xi,\theta+\mathfrak a(\xi_0)).
\)
Let \(m\) denote Haar probability measure on \(\TT^d\), and define
\[
        d\rho(\omega,\xi,\theta)
        =
        d\PP(\omega)\,d\mathbf P^\omega(\xi)\,dm(\theta).
\]
Proposition~\ref{prop:mixed-base-invariance} shows that \(\rho\) is
invariant under \(\mathfrak F\).  We call the resulting measure-preserving
system the \emph{mixed base} and \(\rho\) its \emph{mixed invariant measure}.

\begin{proposition}[Mixed-base invariance]
\label{prop:mixed-base-invariance}
The map \(\mathfrak F\) preserves \(\rho\).
\end{proposition}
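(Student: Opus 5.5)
The plan is to check invariance on a bounded measurable test function \(F\) on \(\Omega\times G^{\NN}\times\TT^d\), i.e.\ to show \(\int F\circ\mathfrak F\,d\rho=\int F\,d\rho\). A monotone-class argument (or the \(\pi\)-system of measurable rectangles) then upgrades this to equality of measures. Write
\[
\int F\circ\mathfrak F\,d\rho=\int_\Omega\int_{G^{\NN}}\int_{\TT^d}F\bigl(\Theta\omega,\sigma\xi,\theta+\mathfrak a(\xi_0)\bigr)\,dm(\theta)\,d\mathbf P^\omega(\xi)\,d\PP(\omega).
\]
This manipulation is legitimate by Fubini/Tonelli for the kernel \(\omega\mapsto\mathbf P^\omega\), whose measurability is supplied by the Ionescu--Tulcea theorem as noted above.

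The first step handles the innermost integral. For fixed \((\omega,\xi)\), translation invariance of Haar measure \(m\) removes the shift \(\theta\mapsto\theta+\mathfrak a(\xi_0)\). The integrand then becomes \(\int F(\Theta\omega,\sigma\xi,\theta)\,dm(\theta)=:\Phi(\Theta\omega,\sigma\xi)\). Since \(\mathfrak a\) is a continuous coordinate projection, the translation is jointly measurable in \((\xi,\theta)\), so \(\Phi\) is a bounded measurable function on \(\Omega\times G^{\NN}\).

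The second step uses the product structure \(\mathbf P^\omega=\nu^\omega\otimes\bigotimes_{n\ge1}\nu^{\Theta^n\omega}=\nu^\omega\otimes\mathbf P^{\Theta\omega}\), where the identification is through \(\xi=(\xi_0,\sigma\xi)\). It follows that \(\sigma_*\mathbf P^\omega=\mathbf P^{\Theta\omega}\): integrating out \(\xi_0\) against the probability \(\nu^\omega\) leaves the tail law. Hence
\[
\int_{G^{\NN}}\Phi(\Theta\omega,\sigma\xi)\,d\mathbf P^\omega(\xi)=\int_{G^{\NN}}\Phi(\Theta\omega,\xi')\,d\mathbf P^{\Theta\omega}(\xi').
\]
This is the step I expect to need the most care. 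One has to verify the identity \(\sigma_*\mathbf P^\omega=\mathbf P^{\Theta\omega}\) for every \(\omega\), not just almost every \(\omega\). The natural way is to check it on cylinder sets, where both sides equal \(\prod_{n\ge1}\nu^{\Theta^n\omega}(B_n)\), and then appeal to uniqueness of the product measure. One also has to make sure the composite function \(\omega\mapsto\int\Phi(\Theta\omega,\cdot)\,d\mathbf P^{\Theta\omega}\) is measurable, which follows from the kernel property composed with the measurable map \(\Theta\).

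The final step is to apply \(\Theta\)-invariance of \(\PP\) to the function \(\psi(\omega):=\int\Phi(\omega,\xi')\,d\mathbf P^{\omega}(\xi')\). This gives \(\int\psi(\Theta\omega)\,d\PP=\int\psi\,d\PP\), which unwinds to \(\int F\,d\rho\). Ergodicity and invertibility of \(\Theta\) are not needed here; only measure preservation is used.
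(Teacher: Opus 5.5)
Your proof is correct and follows essentially the same route as the paper. Both test against bounded measurable functions and combine translation invariance of \(m\), the product structure \(\mathbf P^\omega=\nu^\omega\otimes\mathbf P^{\Theta\omega}\) (your \(\sigma_*\mathbf P^\omega=\mathbf P^{\Theta\omega}\) is its marginalized form), and \(\Theta_*\PP=\PP\); you simply remove the Haar shift for fixed \(\xi\) before integrating out \(\xi_0\), whereas the paper splits off \(\xi_0=g\) first (and since your test class already contains indicators, the monotone-class step is unnecessary).
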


\begin{proof}
Let \(\Phi:\Omega\times G^{\NN}\times\TT^d\to\RR\) be bounded measurable.  Under
the canonical identification \(G^{\NN}\simeq G\times G^{\NN}\),
\(\mathbf P^\omega=\nu^\omega\otimes\mathbf P^{\Theta\omega}\).
Define
\[
        H(\omega)
        :=
        \int_{G^{\NN}}\int_{\TT^d}
        \Phi(\omega,\zeta,\theta)\,dm(\theta)\,
        d\mathbf P^\omega(\zeta).
\]
Then
\[
\begin{aligned}
\int \Phi\circ\mathfrak F\,d\rho
&=
\int_\Omega\int_G\int_{G^{\NN}}\int_{\TT^d}
\Phi(\Theta\omega,\zeta,\theta+\mathfrak a(g))\,
dm(\theta)\,d\mathbf P^{\Theta\omega}(\zeta)\,
d\nu^\omega(g)\,d\PP(\omega)\\
&=
\int_\Omega\int_G\int_{G^{\NN}}\int_{\TT^d}
\Phi(\Theta\omega,\zeta,\theta)\,
dm(\theta)\,d\mathbf P^{\Theta\omega}(\zeta)\,
d\nu^\omega(g)\,d\PP(\omega)\\
&=
\int_\Omega H(\Theta\omega)\,d\PP(\omega)
=
\int_\Omega H(\omega)\,d\PP(\omega)
=
\int \Phi\,d\rho .
\end{aligned}
\]
We used the product-kernel decomposition, translation invariance of Haar
measure, that \(\nu^\omega\) is a probability measure, and
\(\Theta_*\PP=\PP\).
\end{proof}

\subsection{Translation factor and mixed-base ergodicity}

Only the translation component of \(g\in G\) enters the base motion on the
torus.  Set
\[
        \Sigma:=\TT^d,
        \qquad
        \mu^\omega:=\mathfrak a_*\nu^\omega,
        \qquad
        \mathbf Q^\omega:=\bigotimes_{n\ge0}\mu^{\Theta^n\omega}.
\]
We call \(\mu^\omega\) the \emph{translation marginal} of the one-step cocycle
law, and \(\mathbf Q^\omega\) the \emph{conditional path law of the translation
coordinates}.
Let \(\hat\sigma\) be the left shift on \(\Sigma^{\NN}\), and define
\(\hat{\mathfrak F}(\omega,\beta,\theta) := (\Theta\omega,\hat\sigma\beta,\theta+\beta_0)\)
on \(\Omega\times\Sigma^{\NN}\times\TT^d\).  Put
\(d\hat\rho(\omega,\beta,\theta) = d\PP(\omega)\,d\mathbf Q^\omega(\beta)\,dm(\theta)\).
The factor map
\(\Pi(\omega,(\xi_n)_{n\ge0},\theta) := \bigl(\omega,(\mathfrak a(\xi_n))_{n\ge0},\theta\bigr)\)
satisfies
\(\Pi\circ\mathfrak F=\hat{\mathfrak F}\circ\Pi, \ \Pi_*\rho=\hat\rho\).
Thus
\((\Omega\times\Sigma^{\NN}\times\TT^d,\hat{\mathfrak F},\hat\rho)\)
is the translation factor of
\((\Omega\times G^{\NN}\times\TT^d,\mathfrak F,\rho)\).
We use \emph{mixed-base ergodicity} for ergodicity of the full mixed base and
\emph{translation-factor ergodicity} for ergodicity of this factor.  Neither
term means environment ergodicity, which concerns only \(\Theta\).

A measurable family
\(\lambda^\bullet=\{\lambda^\omega\}_{\omega\in\Omega}, \ \lambda^\omega\in\Prob(\TT^d)\),
is called a \emph{stationary torus family} if
\(\lambda^{\Theta\omega}=\mu^\omega*\lambda^\omega \ \text{for }\PP\text{-a.e. }\omega\).
The family \(\lambda^\omega=m\) is the \emph{Haar torus family}.

For a bounded measurable function \(\Phi\) on \(\Omega\times\TT^d\), define the
skew Markov operator
\[
        (\mathcal Q\Phi)(\omega,\theta)
        :=
        \int_{\TT^d}\Phi(\Theta\omega,\theta+\alpha)\,d\mu^\omega(\alpha).
\]
Since the Haar torus family is stationary, \(\mathcal Q\) is well defined as a
contraction on \(L^\infty(\Omega\times\TT^d,\PP\times m)\).  We call
\(\Phi\in L^\infty(\Omega\times\TT^d,\PP\times m)\)
\(\mathcal Q\)-stationary if
\(\mathcal Q\Phi=\Phi \  \text{in }L^\infty(\Omega\times\TT^d,\PP\times m)\).

For \(\beta=(\beta_j)_{j\ge0}\in\Sigma^{\NN}\), set
\[
        \Gamma_0(\beta):=0,
        \qquad
        \Gamma_n(\beta):=\sum_{j=0}^{n-1}\beta_j
        \quad(n\ge1).
\]
For \(\xi=(\xi_j)_{j\ge0}\in G^{\NN}\), set
\[
        \Gamma_0(\xi):=0,
        \qquad
        \Gamma_n(\xi):=\sum_{j=0}^{n-1}\mathfrak a(\xi_j)
        \quad(n\ge1).
\]
If \(\beta_j=\mathfrak a(\xi_j)\) for all \(j\), then
\(\Gamma_n(\beta)=\Gamma_n(\xi)\).  We call these quantities the
\emph{cumulative torus translations} along the corresponding path prefixes.

For \(k\in\ZZ^d\), set
\(e_k(x):=e^{2\pi i k\cdot x}, \ x\in\TT^d\).
For a finite complex Borel measure \(\zeta\) on \(\TT^d\), define
\[
        \widehat{\zeta}(k)
        :=
        \int_{\TT^d}e_k(\theta)\,d\zeta(\theta).
\]
Then
\(\widehat{\zeta_1*\zeta_2}(k) = \widehat{\zeta_1}(k)\widehat{\zeta_2}(k)\).
In particular, if \(\lambda^\bullet\) is a stationary torus family, then
\[
        \widehat{\lambda^{\Theta\omega}}(k)
        =
        \widehat{\mu^\omega}(k)\widehat{\lambda^\omega}(k)
        \qquad
        \text{for }\PP\text{-a.e. }\omega .
\]

The following criterion is the environment-wise torus version of the usual
Fourier criteria for compact-group ergodicity and random walks
\cite{EW11,Ros81}.

\begin{theorem}[Mixed-base ergodicity criterion]
\label{thm:ergodicity-random-environment}
The following statements are equivalent.
\begin{enumerate}
\item The measure-preserving system
\((\Omega\times G^{\NN}\times\TT^d,\mathfrak F,\rho)\)
is ergodic.

\item The translation factor
\((\Omega\times\Sigma^{\NN}\times\TT^d,\hat{\mathfrak F},\hat\rho)\)
is ergodic.

\item Every \(\mathcal Q\)-stationary observable
\(\Phi\in L^\infty(\Omega\times\TT^d,\PP\times m)\)
is constant \((\PP\times m)\)-a.e.

\item For every \(k\in\ZZ^d\setminus\{0\}\), the equation
\(
        u(\omega)=\widehat{\mu^\omega}(k)\,u(\Theta\omega)
        \ 
        \text{for }\PP\text{-a.e. }\omega
\)
has no nonzero solution \(u\in L^\infty(\Omega,\PP)\).

\item Every stationary torus family is the Haar torus family.
\end{enumerate}
\end{theorem}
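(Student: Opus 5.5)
The plan is to prove the cycle (1)\(\Rightarrow\)(2)\(\Rightarrow\)(3)\(\Rightarrow\)(1) and, separately, the equivalences (3)\(\Leftrightarrow\)(4)\(\Leftrightarrow\)(5).

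\emph{(1)\(\Rightarrow\)(2).} This is immediate, since a factor of an ergodic system is ergodic. The factor relations are \(\Pi\circ\mathfrak F=\hat{\mathfrak F}\circ\Pi\) and \(\Pi_*\rho=\hat\rho\).

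\emph{(2)\(\Rightarrow\)(3) and (3)\(\Rightarrow\)(1).} Both use the same martingale argument. Since \(\mathcal Q\) is real, it suffices to treat real \(\Phi\).

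For (2)\(\Rightarrow\)(3), let \(\Phi\) be \(\mathcal Q\)-stationary. On the translation factor put
\[
\Phi_n(\omega,\beta,\theta):=\Phi(\Theta^n\omega,\theta+\Gamma_n(\beta)).
\]
Let \(\mathcal G_n\) be generated by \(\omega\), \(\theta\) and \(\beta_0,\dots,\beta_{n-1}\). Under \(\mathbf Q^\omega\) the coordinate \(\beta_n\) has law \(\mu^{\Theta^n\omega}\), independent of \(\mathcal G_n\). Hence stationarity gives \(\EE[\Phi_{n+1}\mid\mathcal G_n]=\Phi_n\). Thus \((\Phi_n)\) is a bounded martingale and converges a.e. and in \(L^1\) to a limit \(\Psi\). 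Since \(\Phi_{n+1}=\Phi_n\circ\hat{\mathfrak F}\), the limit satisfies \(\Psi\circ\hat{\mathfrak F}=\Psi\). By (2), \(\Psi\) is constant, and therefore \(\Phi=\EE[\Psi\mid\mathcal G_0]\) is constant.

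For (3)\(\Rightarrow\)(1), let \(f\in L^\infty(\rho)\) be \(\mathfrak F\)-invariant. Set
\[
\Phi(\omega,\theta):=\int f(\omega,\xi,\theta)\,d\mathbf P^\omega(\xi).
\]
Using \(\mathbf P^\omega=\nu^\omega\otimes\mathbf P^{\Theta\omega}\) and \(f=f\circ\mathfrak F\), one checks that \(\Phi\) is \(\mathcal Q\)-stationary. The Markov property gives
\[
\EE[f\mid \omega,\theta,\xi_0,\dots,\xi_{n-1}]
=\EE[f\circ\mathfrak F^n\mid\cdots]
=\Phi(\Theta^n\omega,\theta+\Gamma_n(\xi)).
\]
By martingale convergence, \(f\) is the a.e. limit of these conditional expectations. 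By (3), \(\Phi\) is constant, so \(f\) is constant.

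\emph{(3)\(\Leftrightarrow\)(4).} For (3)\(\Rightarrow\)(4), suppose \(u\) solves the equation in (4) for some \(k\neq0\). Take \(\Phi(\omega,\theta)=u(\omega)e_k(\theta)\). Then
\[
\mathcal Q\Phi(\omega,\theta)=u(\Theta\omega)\widehat{\mu^\omega}(k)e_k(\theta)=\Phi(\omega,\theta),
\]
so the real and imaginary parts of \(\Phi\) are \(\mathcal Q\)-stationary. Because \(k\neq0\), \(\Phi\) has zero \(\theta\)-mean, so it is constant only if \(u=0\).

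For (4)\(\Rightarrow\)(3), let \(\Phi\) be stationary and expand in \(\theta\). The coefficients \(c_k(\omega):=\int\Phi(\omega,\theta)e_{-k}(\theta)\,dm\) lie in \(L^\infty\), and stationarity becomes \(c_k(\omega)=\widehat{\mu^\omega}(k)c_k(\Theta\omega)\). By (4), \(c_k=0\) for all \(k\neq0\). The coefficient \(c_0\) is \(\Theta\)-invariant, hence constant by environment ergodicity, so \(\Phi\) is constant.

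\emph{(4)\(\Leftrightarrow\)(5).} The key observation is a modulus argument that runs in either direction. Suppose \(v(\Theta\omega)=a(\omega)v(\omega)\) with \(|a|\le1\), or \(u(\omega)=a(\omega)u(\Theta\omega)\). Then \(|v|\circ\Theta\le|v|\) (respectively \(|u|\le|u|\circ\Theta\)). Since \(\Theta_*\PP=\PP\) and the functions are bounded, this forces \(|v|\circ\Theta=|v|\) a.e. By environment ergodicity \(|v|\equiv c\), and if \(c>0\) then \(|a|=1\) a.e. In that case complex conjugation converts one equation into the other: \(\bar v\) solves \(u=a\,u\circ\Theta\), and conversely.

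For (4)\(\Rightarrow\)(5), let \(\lambda^\bullet\) be a stationary torus family and \(k\neq0\). Then \(v=\widehat{\lambda^\bullet}(k)\) satisfies \(v(\Theta\omega)=\widehat{\mu^\omega}(k)v(\omega)\). If \(v\neq0\), then \(\bar v\) is a nonzero solution of the equation in (4), which is excluded. So every nonzero Fourier coefficient vanishes and \(\lambda^\omega=m\).

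For (5)\(\Rightarrow\)(4), a nonzero solution \(u\), normalized so that \(|u|\equiv1\), gives \(v=\bar u\) with \(v(\Theta\omega)=\widehat{\mu^\omega}(k)v(\omega)\). Define
\[
d\lambda^\omega:=\bigl(1+\tfrac12(v(\omega)e_{-k}+\overline{v(\omega)}e_k)\bigr)\,dm .
\]
Its \(\pm k\) Fourier coefficients transform correctly under convolution with \(\mu^\omega\) (using \(\widehat{\mu}(-k)=\overline{\widehat\mu(k)}\)). So \(\lambda^\bullet\) is a non-Haar stationary torus family, contradicting (5).

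\emph{Main obstacle.} I expect the main difficulty to be measure-theoretic rigor in (3)\(\Rightarrow\)(1). Specifically, one must justify the Markov-property identity for the conditional expectation under the non-identically distributed product kernels \(\mathbf P^\omega\). This requires the Ionescu--Tulcea disintegration \(\mathbf P^\omega=\nu^\omega\otimes\mathbf P^{\Theta\omega}\) iterated \(n\) times, together with measurability of \(\Phi\). The Fourier steps are routine once the modulus argument is in place.
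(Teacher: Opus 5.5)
Your proof is correct. It differs from the paper's in how ergodicity is lifted from the translation factor to the full mixed base.

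\textbf{The paper's route.} The paper proves (2)$\Rightarrow$(1) by a correlation-transfer argument. It disintegrates $\nu^\omega$ along $\mathfrak a$ into conditional kernels $\nu^\omega_\alpha$, forms the conditional path laws $\mathbf P^{\omega,\beta}$, and disintegrates $\rho$ over $\hat\rho$. It then shows that for cylinder observables $\int (F\circ\mathfrak F^N)G\,d\rho=\int(\widehat F\circ\hat{\mathfrak F}^N)\widehat G\,d\hat\rho$ when $N>n$. Separately, it proves (3)$\Rightarrow$(2) by exactly the L\'evy-upward argument you use, but only on the translation factor.

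\textbf{Your route.} You run that upward-martingale argument directly on $(\Omega\times G^{\NN}\times\TT^d,\rho)$. This closes the cycle (1)$\Rightarrow$(2)$\Rightarrow$(3)$\Rightarrow$(1). It makes the paper's correlation step unnecessary, and also its separate (3)$\Rightarrow$(2) step, which is the special case of invariant functions pulled back through $\Pi$. Your route needs only two inputs. The first is the product structure $\mathbf P^\omega=\nu^\omega\otimes\cdots\otimes\nu^{\Theta^{n-1}\omega}\otimes\mathbf P^{\Theta^n\omega}$. The second is that $(\omega,\xi,\theta)\mapsto(\Theta^n\omega,\theta+\Gamma_n(\xi))$ pushes $\rho$ to $\PP\times m$. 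You should state the second fact explicitly: it is what makes $\Phi(\Theta^n\omega,\theta+\Gamma_n(\xi))$ well defined on $L^\infty$ classes, and what turns $\Phi=c$ into $\mathbb E[f\mid\omega,\theta,\xi_0,\dots,\xi_{n-1}]=c$ $\rho$-a.e. In this way you avoid the disintegration along $\mathfrak a$ and the conditional path kernels entirely. You also make transparent that invariant functions of the full base are governed by $\mathcal Q$-stationary functions on $\Omega\times\TT^d$, so the matrix coordinate plays no role.

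\textbf{What the paper's heavier route buys.} It gives a relative-independence statement: correlations of cylinder observables on the full base literally coincide with correlations on the factor. The same identity would therefore transfer mixing or weak mixing from the translation factor, not only ergodicity.

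In (4)$\Leftrightarrow$(5) you conjugate to stay at the mode $k$, whereas the paper passes to the mode $-k_0$; this difference is immaterial.
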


\begin{proof}
\smallskip
\noindent
\((1)\Rightarrow(2)\) follows from the factor relation.

\smallskip
\noindent
\((2)\Rightarrow(1)\).
It suffices to prove Cesaro convergence of correlations on a dense subspace of
\(L^2(\rho)\).  Let \(\mathcal V_n\) consist of functions
\(F(\omega,\xi,\theta) = u(\omega,\xi_0,\dots,\xi_n)\varphi(\theta)\),
where \(u\) is bounded measurable and \(\varphi\in C(\TT^d)\), and set
\(\mathcal V:=\bigcup_{n\ge0}\mathcal V_n\).  The linear span of \(\mathcal V\)
is dense in \(L^2(\rho)\).

Apply the disintegration theorem
\cite[Theorem~8.5]{Kallenberg21} to the joint probability
\(\PP(d\omega)\nu^\omega(dg)\) and the Borel map
\((\omega,g)\mapsto(\omega,\mathfrak a(g))\).  Since all spaces involved are
standard Borel, there is a jointly measurable conditional kernel
\((\omega,\alpha)\mapsto\nu^\omega_\alpha\) such that
\[
        \nu^\omega
        =
        \int_{\TT^d}\nu^\omega_\alpha\,d\mu^\omega(\alpha),
        \qquad
        \nu^\omega_\alpha(\mathfrak a^{-1}\{\alpha\})=1
        \quad
        \text{for }\mu^\omega\text{-a.e. }\alpha .
\]
For \((\omega,\beta)\in\Omega\times\Sigma^{\NN}\), the product-kernel theorem
\cite[Theorem~8.24]{Kallenberg21} gives the measurable conditional path kernel
\[
        \mathbf P^{\omega,\beta}
        :=
        \bigotimes_{j\ge0}\nu^{\Theta^j\omega}_{\beta_j}.
\]
Iterating the preceding disintegration coordinate by coordinate shows that
\(\rho\) disintegrates over \(\hat\rho\) along \(\Pi\):
\begin{equation}\label{eq:rho-disintegration-random-env}
        \int H\,d\rho
        =
        \int_{\Omega\times\Sigma^{\NN}\times\TT^d}
        \left(
        \int_{G^{\NN}}H(\omega,\xi,\theta)\,
        d\mathbf P^{\omega,\beta}(\xi)
        \right)
        d\hat\rho(\omega,\beta,\theta).
\end{equation}

For bounded measurable \(u\) on \(\Omega\times G^{n+1}\), define
\[
        (\mathsf E_nu)(\omega,\beta)
        :=
        \int_{G^{n+1}}
        u(\omega,\xi_0,\dots,\xi_n)
        \prod_{j=0}^n
        \nu^{\Theta^j\omega}_{\beta_j}(d\xi_j).
\]
Let
\(T(\omega,\xi):=(\Theta\omega,\sigma\xi), \  \hat T(\omega,\beta):=(\Theta\omega,\hat\sigma\beta)\).
Product independence under \(\mathbf P^{\omega,\beta}\) gives, for \(N>n\),
\begin{equation}\label{eq:factorization-random-env}
        \mathsf E_{N+n}\bigl[(u\circ T^N)v\bigr]
        =
        (\mathsf E_nu\circ \hat T^N)(\mathsf E_nv).
\end{equation}

Take \(F,G\in\mathcal V\), enlarging \(n\) so that
\(F=u(\omega,\xi_0,\dots,\xi_n)\varphi(\theta), \ G=v(\omega,\xi_0,\dots,\xi_n)\psi(\theta)\).
Using \eqref{eq:rho-disintegration-random-env} and
\eqref{eq:factorization-random-env}, for \(N>n\),
\[
\begin{aligned}
\int (F\circ\mathfrak F^N)G\,d\rho
&=
\int_{\Omega\times\Sigma^{\NN}\times\TT^d}
(\mathsf E_nu\circ\hat T^N)(\omega,\beta)
(\mathsf E_nv)(\omega,\beta) \\
&\hspace{3.5cm}\times
\varphi(\theta+\Gamma_N(\beta))\psi(\theta)\,
d\hat\rho(\omega,\beta,\theta).
\end{aligned}
\]
Define
\[
        \widehat F(\omega,\beta,\theta)
        :=
        (\mathsf E_nu)(\omega,\beta)\varphi(\theta),
        \qquad
        \widehat G(\omega,\beta,\theta)
        :=
        (\mathsf E_nv)(\omega,\beta)\psi(\theta).
\]
Since
\(\hat{\mathfrak F}^N(\omega,\beta,\theta) = (\Theta^N\omega,\hat\sigma^N\beta,\theta+\Gamma_N(\beta))\),
we obtain
\begin{equation}\label{eq:correlation-transfer-random-env}
        \int (F\circ\mathfrak F^N)G\,d\rho
        =
        \int (\widehat F\circ\hat{\mathfrak F}^N)\widehat G\,d\hat\rho,
        \qquad N>n.
\end{equation}
Moreover,
\[
        \int \widehat F\,d\hat\rho=\int F\,d\rho,
        \qquad
        \int \widehat G\,d\hat\rho=\int G\,d\rho .
\]
By ergodicity of \((\hat{\mathfrak F},\hat\rho)\), the correlation criterion for
ergodicity, and \eqref{eq:correlation-transfer-random-env}, Cesaro correlation
convergence holds for all \(F,G\in\mathcal V\).  Density then gives ergodicity
of \((\mathfrak F,\rho)\).

\smallskip
\noindent
\((2)\Rightarrow(3)\).
Let
\(\Phi\in L^\infty(\Omega\times\TT^d,\PP\times m), \ \mathcal Q\Phi=\Phi\).
Choose a bounded representative.  For \(n\ge0\), set
\(M_n(\omega,\beta,\theta) := \Phi(\Theta^n\omega,\theta+\Gamma_n(\beta))\).
Let
\[
        \mathscr G_0:=\sigma(\omega,\theta),
        \qquad
        \mathscr G_n:=\sigma(\omega,\theta,\beta_0,\dots,\beta_{n-1})
        \quad(n\ge1).
\]
The map
\((\omega,\beta,\theta) \longmapsto (\Theta^n\omega,\theta+\Gamma_n(\beta))\)
pushes \(\hat\rho\) forward to \(\PP\times m\).  Hence \(M_n\) is bounded and
\(\mathscr G_n\)-measurable.  Since, conditionally on \(\mathscr G_n\),
\(\beta_n\) has law \(\mu^{\Theta^n\omega}\),
\[
\begin{aligned}
\EE_{\hat\rho}[M_{n+1}\mid\mathscr G_n]
&=
\int_{\TT^d}
\Phi(\Theta^{n+1}\omega,\theta+\Gamma_n(\beta)+\alpha)\,
d\mu^{\Theta^n\omega}(\alpha)\\
&=
(\mathcal Q\Phi)(\Theta^n\omega,\theta+\Gamma_n(\beta))
=
M_n .
\end{aligned}
\]
Thus \((M_n,\mathscr G_n)\) is a bounded, hence uniformly integrable,
martingale.  The martingale convergence theorems
\cite[Theorems~9.19 and~9.22]{Kallenberg21} give convergence a.e. and in
\(L^1(\hat\rho)\) to some \(M_\infty\).  Since
\(M_n\circ\hat{\mathfrak F}=M_{n+1}\),
and \(\hat{\mathfrak F}\) preserves \(\hat\rho\), the limit satisfies
\(M_\infty\circ\hat{\mathfrak F}=M_\infty, \ \hat\rho\text{-a.e.}\)
By ergodicity of the translation factor, \(M_\infty=c\) a.e.  Since
\(\EE_{\hat\rho}[M_n\mid\mathscr G_0]=M_0 \ \text{for all }n\),
passing to the \(L^1\)-limit gives
\(M_0=\EE_{\hat\rho}[M_\infty\mid\mathscr G_0]=c\).
As \(M_0(\omega,\beta,\theta)=\Phi(\omega,\theta)\), \(\Phi=c\)
\((\PP\times m)\)-a.e.

\smallskip
\noindent
\((3)\Rightarrow(2)\).
Let \(H\in L^\infty(\hat\rho)\) satisfy
\(H\circ\hat{\mathfrak F}=H \ \hat\rho\text{-a.e.}\)
Define
\[
        \Phi(\omega,\theta)
        :=
        \int_{\Sigma^{\NN}}H(\omega,\beta,\theta)\,
        d\mathbf Q^\omega(\beta).
\]
Using
\(\mathbf Q^\omega=\mu^\omega\otimes\mathbf Q^{\Theta\omega}\)
and \(H=H\circ\hat{\mathfrak F}\), we get
\(\Phi=\mathcal Q\Phi\).
By \((3)\), \(\Phi=c\) \((\PP\times m)\)-a.e.

For the filtration \((\mathscr G_n)_{n\ge0}\) above,
\(
        \EE_{\hat\rho}[H\mid\mathscr G_n](\omega,\beta,\theta)
        =
        \Phi(\Theta^n\omega,\theta+\Gamma_n(\beta)).
\)
Indeed, this follows by testing against bounded \(\mathscr G_n\)-measurable
functions, using \(H=H\circ\hat{\mathfrak F}^n\), and integrating out the tail.
The right-hand side is equal to \(c\) a.e. because the displayed push-forward
map has marginal \(\PP\times m\).  Hence
\(\EE_{\hat\rho}[H\mid\mathscr G_n]=c, \ \hat\rho\text{-a.e.}\)
for every \(n\).  Since \(\mathscr G_n\) increases to the full
\(\sigma\)-algebra, the upward theorem for conditional expectations
\cite[Theorem~9.24]{Kallenberg21} gives
\(\EE_{\hat\rho}[H\mid\mathscr G_n]\to H\) in \(L^1\), and therefore
\(H=c\) a.e.  Thus the translation factor is ergodic.

\smallskip
\noindent
\((3)\Leftrightarrow(4)\).
For
\(\Phi\in L^\infty(\Omega\times\TT^d,\PP\times m) \subset L^2(\Omega\times\TT^d,\PP\times m)\),
define
\[
        \widehat{\Phi}(\omega,k)
        :=
        \int_{\TT^d}\Phi(\omega,\theta)\overline{e_k(\theta)}\,dm(\theta).
\]
Then \(\widehat{\Phi}(\cdot,k)\in L^\infty(\Omega,\PP)\).  A direct Fourier
calculation gives
\(
        \widehat{(\mathcal Q\Phi)}(\omega,k)
        =
        \widehat{\mu^\omega}(k)\widehat{\Phi}(\Theta\omega,k).
\)
Therefore \(\mathcal Q\Phi=\Phi\) if and only if, for every \(k\in\ZZ^d\),
\(
        \widehat{\Phi}(\omega,k)
        =
        \widehat{\mu^\omega}(k)\widehat{\Phi}(\Theta\omega,k)
        \ 
        \PP\text{-a.e.}
\)

Assume \((4)\).  If \(\mathcal Q\Phi=\Phi\), then all nonzero Fourier modes
vanish by \((4)\).  For \(k=0\),
\(\widehat{\Phi}(\omega,0)=\widehat{\Phi}(\Theta\omega,0)\),
so ergodicity of \(\Theta\) makes the zero mode constant.  Hence \(\Phi\) is
constant, proving \((3)\).

Conversely, if \((4)\) fails, choose \(k_0\neq0\) and nonzero
\(u\in L^\infty(\Omega,\PP)\) such that
\(u(\omega)=\widehat{\mu^\omega}(k_0)u(\Theta\omega)\).
Then
\(\Phi(\omega,\theta):=u(\omega)e_{k_0}(\theta)\)
is nonconstant and satisfies \(\mathcal Q\Phi=\Phi\).  Hence \((3)\) fails.

\smallskip
\noindent
\((4)\Leftrightarrow(5)\).
Assume first that \((4)\) holds, and let
\(\lambda^\bullet=\{\lambda^\omega\}_{\omega\in\Omega}\)
be stationary on \(\TT^d\).  For \(k\in\ZZ^d\), put
\(u_k(\omega):=\widehat{\lambda^\omega}(k), \ a_k(\omega):=\widehat{\mu^\omega}(k)\).
Stationarity gives
\(u_k(\Theta\omega)=a_k(\omega)u_k(\omega), \ \PP\text{-a.e.}\)
Suppose that \(\lambda^\bullet\) is not the Haar torus family.  Then for some
\(k_0\neq0\), \(u:=u_{k_0}\) is nonzero on a set of positive measure.  Writing
\(a:=a_{k_0}\), we have
\(u(\Theta\omega)=a(\omega)u(\omega), \ |a(\omega)|\le1\).
Thus \(|u\circ\Theta|\le |u|\).  Integration and \(\Theta_*\PP=\PP\) imply
equality a.e.; by ergodicity, \(u\neq0\) a.e. and \(|a|=1\) a.e.  Hence
\[
        u(\omega)=\overline{a(\omega)}\,u(\Theta\omega)
        =
        \widehat{\mu^\omega}(-k_0)u(\Theta\omega),
\]
which is a nonzero bounded solution forbidden by \((4)\) for the mode
\(-k_0\).  Therefore all nonzero Fourier--Stieltjes coefficients of
\(\lambda^\omega\) vanish for \(\PP\)-a.e. \(\omega\).  By countability of
\(\ZZ^d\setminus\{0\}\) and uniqueness of Fourier--Stieltjes coefficients,
\(\lambda^\omega=m\) for \(\PP\)-a.e. \(\omega\).

Conversely, assume \((5)\) and suppose that \((4)\) fails.  Then for some
\(k_0\neq0\) there is a nonzero \(u\in L^\infty(\Omega,\PP)\) satisfying
\(u(\omega)=a(\omega)u(\Theta\omega), \ a(\omega):=\widehat{\mu^\omega}(k_0)\).
As above, \(|a|=1\) a.e. and \(u\neq0\) a.e.  Define
\[
        \zeta(\omega):=\frac{u(\omega)}{|u(\omega)|}
\]
on the full-measure set where \(u\neq0\), and define it arbitrarily on the
remaining null set.  Then \(|\zeta|=1\) and
\(\zeta(\Theta\omega)=\overline{a(\omega)}\,\zeta(\omega), \ \PP\text{-a.e.}\)

Fix \(0<\varepsilon<1/2\), and define
\[
        d\lambda^\omega(\theta)
        =
        \left(
        1+
        \varepsilon\zeta(\omega)e_{k_0}(\theta)
        +
        \varepsilon\overline{\zeta(\omega)}e_{-k_0}(\theta)
        \right)dm(\theta).
\]
Equivalently, the density is
\(1+2\varepsilon\Re\bigl(\zeta(\omega)e_{k_0}(\theta)\bigr)\),
so \(\lambda^\omega\) is a probability measure.  It is not Haar, since
\[
        \widehat{\lambda^\omega}(k_0)
        =
        \varepsilon\overline{\zeta(\omega)},
        \qquad
        \widehat{\lambda^\omega}(-k_0)
        =
        \varepsilon\zeta(\omega).
\]
The zero Fourier mode is stationary.  For \(-k_0\),
\[
\begin{aligned}
        \widehat{\mu^\omega*\lambda^\omega}(-k_0)
        &=
        \widehat{\mu^\omega}(-k_0)
        \widehat{\lambda^\omega}(-k_0)  \\
        &=
        \overline{a(\omega)}\,\varepsilon\zeta(\omega)
        =
        \varepsilon\zeta(\Theta\omega)
        =
        \widehat{\lambda^{\Theta\omega}}(-k_0).
\end{aligned}
\]
For \(k_0\), the conjugate identity gives
\(\widehat{\mu^\omega*\lambda^\omega}(k_0) = \widehat{\lambda^{\Theta\omega}}(k_0)\).
All other nonzero Fourier modes vanish on both sides.  Fourier--Stieltjes
uniqueness gives
\(\mu^\omega*\lambda^\omega=\lambda^{\Theta\omega}, \ \PP\text{-a.e.}\)
Thus \(\lambda^\bullet\) is a non-Haar stationary torus family, contradicting \((5)\).
Therefore \((4)\) holds.

The theorem is proved.
\end{proof}

\subsection{Linear cocycle and top Lyapunov exponent}

The linear skew product over the mixed base is
\[
        \mathscr F:
        \Omega\times G^{\NN}\times\TT^d\times\RR^r
        \longrightarrow
        \Omega\times G^{\NN}\times\TT^d\times\RR^r
\]
defined by
\[
        \mathscr F(\omega,\xi,\theta,v)
        :=
        \bigl(
        \Theta\omega,
        \sigma\xi,
        \theta+\mathfrak a(\xi_0),
        \mathcal A(\xi_0)(\theta)v
        \bigr).
\]
Thus \(\mathscr F\) covers \(\mathfrak F\), and its vector-fiber action over
\((\omega,\xi,\theta)\) is \(\mathcal A(\xi_0)(\theta)\).

Using the path-sum notation above, set
\(\mathcal A_0(\xi,\theta):=I\),
and, for \(n\ge1\),
\[
        \mathcal A_n(\xi,\theta)
        :=
        \mathcal A(\xi_{n-1})\bigl(\theta+\Gamma_{n-1}(\xi)\bigr)
        \cdots
        \mathcal A(\xi_1)\bigl(\theta+\Gamma_1(\xi)\bigr)
        \mathcal A(\xi_0)(\theta).
\]
We call \(\mathcal A_n(\xi,\theta)\) the \emph{\(n\)-step matrix product}.
Then
\[
        \mathscr F^n(\omega,\xi,\theta,v)
        =
        \bigl(
        \Theta^n\omega,
        \sigma^n\xi,
        \theta+\Gamma_n(\xi),
        \mathcal A_n(\xi,\theta)v
        \bigr).
\]
Although \(\mathcal A_n(\xi,\theta)\) is not written with an explicit
\(\omega\)-dependence, its law depends on \(\omega\) through
\[
        \mathbf P^\omega
        =
        \bigotimes_{n\ge0}\nu^{\Theta^n\omega}.
\]

For Lyapunov-exponent statements, we assume the logarithmic integrability
condition
\begin{equation}\label{eq:log-integrability-G}
        \int_\Omega\int_G
        \Lambda_{\mathcal A}(g)
        \,d\nu^\omega(g)\,d\PP(\omega)
        <\infty,
\end{equation}
where
\(\Lambda_{\mathcal A}(g) := \sup_{\theta\in\TT^d} \log^+\|\mathcal A(g)(\theta)\|\).
Since the matrices lie in \(\SL_r(\RR)\), this also gives inverse logarithmic
integrability.  Indeed, for \(B\in\SL_r(\RR)\),
\[
        \|B^{-1}\|\le \|B\|^{r-1},
        \qquad
        \log^+\|B^{-1}\|
        \le
        (r-1)\log^+\|B\|.
\]
Consequently,
\[
        \sup_{\theta\in\TT^d}
        \log^+\|\mathcal A(g)(\theta)^{-1}\|
        \le
        (r-1)\Lambda_{\mathcal A}(g).
\]
We shall use this inverse estimate repeatedly.  For notational convenience set
\(
        C_{\mathrm{inv}}:=\max\{1,r-1\}.
\)

For \(n\ge1\), define
\(f_n(\omega,\xi,\theta) := \log\|\mathcal A_n(\xi,\theta)\|\).
The cocycle identity
\[
        \mathcal A_{n+m}(\xi,\theta)
        =
        \mathcal A_m(\sigma^n\xi,\theta+\Gamma_n(\xi))
        \mathcal A_n(\xi,\theta)
\]
implies
\[
        f_{n+m}(\omega,\xi,\theta)
        \le
        f_n(\omega,\xi,\theta)
        +
        f_m(\mathfrak F^n(\omega,\xi,\theta)).
\]
Moreover, \eqref{eq:log-integrability-G} gives
\(f_1^+\in L^1(\rho)\).
Kingman's subadditive ergodic theorem therefore applies \cite{Kin68}.  Hence
\[
        \lim_{n\to\infty}
        \frac1n\log\|\mathcal A_n(\xi,\theta)\|
\]
exists for \(\rho\)-a.e. \((\omega,\xi,\theta)\).  If
\((\Omega\times G^{\NN}\times\TT^d,\mathfrak F,\rho)\)
is ergodic, the limit is constant \(\rho\)-a.e.. This constant is the top
Lyapunov exponent of \(\mathscr F\), denoted by \(L_1(\rho)\).  Thus
\[
        L_1(\rho)
        =
        \lim_{n\to\infty}
        \frac1n\log\|\mathcal A_n(\xi,\theta)\|
        \qquad
        \text{for }\rho\text{-a.e. }(\omega,\xi,\theta).
\]
In the same ergodic case, Kingman's theorem also gives
\[
        L_1(\rho)
        =
        \lim_{n\to\infty}
        \int_{\Omega\times G^{\NN}\times\TT^d}
        \frac1n\log\|\mathcal A_n(\xi,\theta)\|\,
        d\rho(\omega,\xi,\theta)
\]
and
\[
        L_1(\rho)
        =
        \inf_{n\ge1}
        \frac1n
        \int_{\Omega\times G^{\NN}\times\TT^d}
        \log\|\mathcal A_n(\xi,\theta)\|\,
        d\rho(\omega,\xi,\theta).
\]

\begin{remark}[Norm convention]
\label{rem:norm-convention}
Throughout the paper, \(\|\cdot\|\) denotes the fixed Euclidean norm on
\(\RR^r\) and the induced operator norm.  Replacing it by any other norm changes
\(\log\|B_n\cdots B_1\|\) by a uniformly bounded additive error.  Indeed, if
\(C^{-1}\|v\|\le \|v\|_*\le C\|v\|, \ v\in\RR^r\),
then the induced operator norms satisfy
\(C^{-2}\|M\|\le \|M\|_*\le C^2\|M\|, \ M\in\mathrm{End}(\RR^r)\).
Therefore
\[
        \left|
        \log\|B_n\cdots B_1\|_*-
        \log\|B_n\cdots B_1\|
        \right|
        \le 2\log C,
\]
and division by \(n\) leaves the top Lyapunov exponent unchanged.
\end{remark}

\section{Stationary Projective Families and the Environment-Wise Furstenberg Formula}
\label{sec:stationary-families}

This section supplies the projective variational layer for the mixed base fixed
in Section~\ref{sec:model}.  The projective action turns the matrix cocycle
into a random transformation of
\(Q=\TT^d\times\mathbb P(\RR^r)\), and the moving one-step cocycle law
\(\nu^\omega\) leads naturally to stationary projective families rather than
to one stationary projective measure.  We construct the compact stationary
Young-measure domain, record exact equivariance for later use in
Section~\ref{sec:exact-equivariance}, use mixed-base ergodicity to force Haar
torus marginals, and then prove the environment-wise Furstenberg formula.

\subsection{Stationary projective families and exact equivariance}

We first pass from the linear cocycle to its projective dynamics.  This provides
the variational domain for the Furstenberg functional, while the exact version
of equivariance is recorded for the zero-exponent rigidity argument in the next
section.

The \emph{projective state space} is
\(Q:=\TT^d\times\mathbb P(\RR^r)\).
For \(g=(\alpha,A)\in G\), define the projective action on \(Q\) by
\(
        g\cdot(\theta,\hat p)
        :=
        \bigl(\theta+\alpha,\widehat{A(\theta)p}\bigr),
\)
where \(p\in\hat p\) is any nonzero representative.  This is independent of the
choice of representative.

\begin{definition}[Stationary projective family]
A measurable family
\(\eta^\bullet=\{\eta^\omega\}_{\omega\in\Omega}, \  \eta^\omega\in\Prob(Q)\),
is called a \emph{stationary projective family} if
\begin{equation}\label{eq:averaged-stationary}
        \eta^{\Theta\omega}
        =
        \int_G g_*\eta^\omega\,d\nu^\omega(g)
        \qquad
        \text{for }\PP\text{-a.e. }\omega .
\end{equation}
\end{definition}

We write
\(\mathscr S_Q := \{\eta^\bullet:\eta^\bullet\text{ is a stationary projective family}\}\).

\begin{definition}[Exact equivariant projective family]
\label{def:exactly-equivariant-family}
A measurable family \(\eta^\bullet=\{\eta^\omega\}_{\omega\in\Omega}\), with
\(\eta^\omega\in\Prob(Q)\), is called an \emph{exact equivariant projective family
on \(Q\)} if there is a Borel set \(\Omega_0\subset\Omega\) with
\(\PP(\Omega_0)=1\) such that, for every \(\omega\in\Omega_0\) and every
\(g\in\supp\nu^\omega\),
\begin{equation}\label{eq:exact-equivariant}
        g_*\eta^\omega=\eta^{\Theta\omega}.
\end{equation}
Here \(\supp\nu^\omega\) is the topological support of \(\nu^\omega\) in \(G\).
\end{definition}

\begin{remark}[Averaged stationarity versus exact equivariance]
Averaged stationarity is the fixed-point relation obtained after integration
against \(\nu^\omega\).  Exact equivariance is the stronger exact support
transport identity required for every support element.
It is not needed for the Furstenberg formula itself, but it is the rigidity
output used in Section~\ref{sec:exact-equivariance}.
\end{remark}

To prove that the stationary domain is nonempty and compact, we use the
Young-measure compactification of random probability fields over \(Q\), with
the stable topology of \cite[Chapter~2, Section~2.1]{CRV04}.  Let
\[
\mathscr Y_{\PP}(Q)
:=
\bigl\{\lambda\in\Prob(\Omega\times Q):(\pi_\Omega)_*\lambda=\PP\bigr\},
\]
where \(\pi_\Omega:\Omega\times Q\to\Omega\) is the canonical projection.  We
endow \(\mathscr Y_{\PP}(Q)\) with the \emph{stable topology}, namely the coarsest
topology for which
\[
        \lambda\longmapsto
        \int_{\Omega\times Q}\varphi(\omega,x)\,d\lambda(\omega,x)
\]
is continuous for every \(\varphi\in L^1(\Omega;C(Q))\).  Equivalently,
\(\varphi\) is jointly measurable, \(\varphi(\omega,\cdot)\in C(Q)\) for
\(\PP\)-a.e. \(\omega\), and
\[
        \int_\Omega \sup_{x\in Q}|\varphi(\omega,x)|\,d\PP(\omega)<\infty .
\]

Since \((\Omega,\cF,\PP)\) and \(Q\) are standard Borel, the disintegration
theorem \cite[Theorem~8.5]{Kallenberg21} gives, for every
\(\lambda\in\mathscr Y_{\PP}(Q)\), a measurable probability kernel
\(\omega\mapsto\lambda_\omega\) such that
\(\lambda(d\omega,dx)=\PP(d\omega)\lambda_\omega(dx)\).  Thus
\(\mathscr Y_{\PP}(Q)\) is the Young-measure space over \((\Omega,\PP)\) with
values in the compact metric space \(Q\).  Because \(Q\) is compact, this class
is strictly tight.  The stable compactness theorem
\cite[Theorem~4.3.5(A1)]{CRV04} gives relative compactness, and the
fixed-marginal condition is stable closed; hence
\(\mathscr Y_{\PP}(Q)\) is compact in the stable topology.  It is also convex and
nonempty.

\begin{proposition}[Existence via Young measures]
\label{prop:stationary-existence}
There exists at least one stationary projective family
\(
\eta^\bullet=\{\eta^\omega\}_{\omega\in\Omega}
\)
on \(Q\).
\end{proposition}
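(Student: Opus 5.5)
We construct a stationary family as a fixed point of the averaged transfer operator on the compact convex set \(\mathscr Y_{\PP}(Q)\). For \(\lambda\in\mathscr Y_{\PP}(Q)\) with disintegration \(\lambda(d\omega,dx)=\PP(d\omega)\lambda_\omega(dx)\), define \(\mathcal T\lambda\) through its disintegration
\[
        (\mathcal T\lambda)_{\Theta\omega}
        :=
        \int_G g_*\lambda_\omega\,d\nu^\omega(g).
\]
Equivalently, for bounded measurable \(\varphi\) on \(\Omega\times Q\),
\[
        \int\varphi\,d(\mathcal T\lambda)
        =
        \int_\Omega\int_G\int_Q
        \varphi(\Theta\omega,g\cdot x)\,d\lambda_\omega(x)\,d\nu^\omega(g)\,d\PP(\omega).
\]
Invertibility of \(\Theta\) and \(\Theta_*\PP=\PP\) give \((\pi_\Omega)_*\mathcal T\lambda=\PP\). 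Measurability of \(\omega\mapsto(\mathcal T\lambda)_\omega\) follows from joint measurability of \((g,x)\mapsto g\cdot x\), since the evaluation \(G\times Q\to Q\) is continuous, together with measurability of the kernels \(\nu^\bullet\) and \(\lambda_\bullet\) and bimeasurability of \(\Theta\). Thus \(\mathcal T\) maps \(\mathscr Y_{\PP}(Q)\) to itself. It is clearly affine, and a fixed point \(\lambda=\mathcal T\lambda\) has disintegration \(\eta^\omega:=\lambda_\omega\) satisfying \eqref{eq:averaged-stationary} for \(\PP\)-a.e. \(\omega\), by a.e. uniqueness of disintegrations.

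To obtain a fixed point, we use the Markov--Kakutani theorem, or a Krylov--Bogolyubov Ces\`aro average \(\frac1N\sum_{n<N}\mathcal T^n\lambda_0\) combined with compactness. Both routes require continuity of \(\mathcal T\) in the stable topology, and this is the main step. Take \(\varphi\in L^1(\Omega;C(Q))\) and set
\[
        \psi(\omega,x):=\int_G\varphi(\Theta\omega,g\cdot x)\,d\nu^\omega(g).
\]
Then \(\int\varphi\,d\mathcal T\lambda=\int\psi\,d\lambda\), so continuity of \(\mathcal T\) reduces to showing \(\psi\in L^1(\Omega;C(Q))\). The integrability bound is immediate:
\[
        \int_\Omega\sup_x|\psi(\omega,x)|\,d\PP
        \le
        \int_\Omega\sup_x|\varphi(\Theta\omega,x)|\,d\PP
        =
        \|\varphi\|_{L^1(\Omega;C(Q))}.
\]
Joint measurability of \(\psi\) follows from Fubini applied to the kernel. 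For continuity in \(x\), fix \(\omega\) with \(\varphi(\Theta\omega,\cdot)\in C(Q)\). For each fixed \(g=(\alpha,A)\), the map \(x\mapsto g\cdot x\) is continuous on \(Q\), because \(A\) is continuous and the projective action of \(\SL_r(\RR)\) is continuous. Hence \(x\mapsto\varphi(\Theta\omega,g\cdot x)\) is continuous and bounded by \(\sup|\varphi(\Theta\omega,\cdot)|\), and dominated convergence gives continuity of \(\psi(\omega,\cdot)\). The null set where \(\varphi(\Theta\omega,\cdot)\) fails to be continuous is harmless, since \(\Theta\) preserves \(\PP\). No uniformity in \(g\) is needed, so noncompactness of \(\supp\nu^\omega\) causes no difficulty.

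With \(\mathcal T\) continuous and affine on the nonempty compact convex set \(\mathscr Y_{\PP}(Q)\), we conclude as follows. The stable topology is induced by the linear functionals \(\lambda\mapsto\int\varphi\,d\lambda\), so \(\mathscr Y_{\PP}(Q)\) is a compact convex subset of a locally convex space, namely finite signed measures with this weak topology. Markov--Kakutani, or Schauder--Tychonoff for a single continuous affine map, then yields a fixed point. For a more elementary alternative, choose any \(\lambda_0\), for instance \(\PP\otimes(m\otimes\text{any fixed measure on }\mathbb P(\RR^r))\), and form \(\bar\lambda_N:=\frac1N\sum_{n<N}\mathcal T^n\lambda_0\). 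Since \(\mathscr Y_{\PP}(Q)\) is compact but possibly non-metrizable, take a limit point \(\bar\lambda\) along a subnet. Then \(\mathcal T\bar\lambda_N-\bar\lambda_N=\frac1N(\mathcal T^N\lambda_0-\lambda_0)\), and each functional applied to this difference is bounded by \(2\|\varphi\|/N\), so it tends to zero. Continuity of \(\mathcal T\) then gives \(\mathcal T\bar\lambda=\bar\lambda\). Disintegrating \(\bar\lambda\) produces the required stationary projective family \(\eta^\bullet\).
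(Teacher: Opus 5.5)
Your proposal is correct and follows essentially the same route as the paper. The paper also defines the averaged transfer operator on \(\mathscr Y_{\PP}(Q)\) by duality, checks that it preserves the \(\Omega\)-marginal and maps \(L^1(\Omega;C(Q))\) into itself (hence is stably continuous), applies the Tychonoff fixed point theorem, and reads off averaged stationarity by uniqueness of disintegration; your Krylov--Bogolyubov subnet alternative is a valid but unnecessary substitute for that fixed-point step.
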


\begin{proof}
Fix \(\lambda\in\mathscr Y_{\PP}(Q)\), and write
\(\lambda(d\omega,dx)=\PP(d\omega)\lambda_\omega(dx)\).  Define an affine operator
\(\mathcal P:\mathscr Y_{\PP}(Q)\to\mathscr Y_{\PP}(Q)\) by duality: for every
\(\varphi\in L^1(\Omega;C(Q))\),
\begin{equation}\label{eq:def-P-on-young}
\int_{\Omega\times Q}\varphi(\omega,x)\,d(\mathcal P\lambda)(\omega,x)
:=
\int_\Omega\int_Q
\left[
\int_G
\varphi(\Theta\omega,g\cdot x)\,d\nu^\omega(g)
\right]
d\lambda_\omega(x)\,d\PP(\omega).
\end{equation}

For \(\varphi\in L^1(\Omega;C(Q))\), set
\[
        (\mathcal L\varphi)(\omega,x)
        :=
        \int_G \varphi(\Theta\omega,g\cdot x)\,d\nu^\omega(g).
\]
The continuity of the action \(G\times Q\to Q\), the measurability of the kernel
\(\omega\mapsto\nu^\omega\), and dominated convergence imply that
\(\mathcal L\varphi\) is jointly measurable and that
\((\mathcal L\varphi)(\omega,\cdot)\in C(Q)\) for \(\PP\)-a.e. \(\omega\).  Moreover,
\[
        \| (\mathcal L\varphi)(\omega,\cdot)\|_{C(Q)}
        \le
        \|\varphi(\Theta\omega,\cdot)\|_{C(Q)}.
\]
Using \(\Theta_*\PP=\PP\), we get
\[
        \int_\Omega
        \| (\mathcal L\varphi)(\omega,\cdot)\|_{C(Q)}\,d\PP(\omega)
        \le
        \int_\Omega
        \|\varphi(\omega,\cdot)\|_{C(Q)}\,d\PP(\omega)<\infty.
\]
Thus \(\mathcal L\) maps \(L^1(\Omega;C(Q))\) into itself.

The operator \(\mathcal P\) preserves the base marginal.  Indeed, for bounded
measurable \(\chi:\Omega\to\RR\), viewed as independent of \(x\),
\[
\begin{aligned}
\int_{\Omega\times Q}\chi(\omega)\,d(\mathcal P\lambda)(\omega,x)
&=
\int_\Omega \chi(\Theta\omega)\,d\PP(\omega)  \\
&=
\int_\Omega \chi(\omega)\,d\PP(\omega).
\end{aligned}
\]
Hence \(\mathcal P(\mathscr Y_{\PP}(Q))\subset\mathscr Y_{\PP}(Q)\).  Since
\(\mathcal L\varphi\in L^1(\Omega;C(Q))\), the dual formula
\eqref{eq:def-P-on-young} also shows that \(\mathcal P\) is continuous in the
stable topology.

The stable topology is induced by the integration pairings above, so
\(\mathscr Y_{\PP}(Q)\) is a compact convex subset of a Hausdorff locally convex
space.  The Tychonoff fixed point theorem \cite{Tychonoff35} therefore gives
\(\lambda\in\mathscr Y_{\PP}(Q)\) with \(\mathcal P\lambda=\lambda\).  Disintegrate
it as
\(\lambda(d\omega,dx)=\PP(d\omega)\eta^\omega(dx)\).
It remains only to rewrite the fixed-point identity pointwise in the moving
environment.  Since \(\Theta\) is invertible, define
\[
        \zeta^\omega
        :=
        \int_G g_*\eta^{\Theta^{-1}\omega}\,
        d\nu^{\Theta^{-1}\omega}(g)
        \qquad
        \text{for }\PP\text{-a.e. }\omega .
\]
For every bounded measurable \(\chi:\Omega\to\RR\) and every \(f\in C(Q)\), the
change of variables \(\omega\mapsto\Theta\omega\), the definition of
\(\mathcal P\), and \(\mathcal P\lambda=\lambda\) give
\[
\begin{aligned}
\int_\Omega \chi(\omega)\int_Q f\,d\zeta^\omega\,d\PP(\omega)
&=
\int_\Omega \chi(\Theta\omega)
\int_Q\int_G f(g\cdot x)\,d\nu^\omega(g)\,d\eta^\omega(x)\,d\PP(\omega) \\
&=
\int_{\Omega\times Q}\chi(\omega)f(x)\,
d(\mathcal P\lambda)(\omega,x) \\
&=
\int_\Omega \chi(\omega)\int_Q f\,d\eta^\omega\,d\PP(\omega).
\end{aligned}
\]
Uniqueness in the disintegration theorem
\cite[Theorem~8.5]{Kallenberg21}, applied over the fixed base marginal \(\PP\),
yields
\[
        \eta^\omega=
        \int_G g_*\eta^{\Theta^{-1}\omega}\,
        d\nu^{\Theta^{-1}\omega}(g)
        \qquad
        \text{for }\PP\text{-a.e. }\omega .
\]
Replacing \(\omega\) by \(\Theta\omega\) gives \eqref{eq:averaged-stationary}.
Thus \(\eta^\bullet\in\mathscr S_Q\).
\end{proof}

\subsection{Haar torus marginals and projective disintegration}

The preceding subsection gives stationary projective families without using
mixed-base ergodicity.  We now use the ergodicity criterion from
Theorem~\ref{thm:ergodicity-random-environment} to identify their torus
marginals.  This is the step that lets later arguments use the fixed Haar torus
marginal rather than an arbitrary stationary torus family.

\begin{theorem}[Haar torus marginal]
\label{prop:Haar-marginal}
Assume that the measure-preserving system
\((\Omega\times G^{\NN}\times\TT^d,\mathfrak F,\rho)\)
is ergodic.  If
\(\eta^\bullet=\{\eta^\omega\}_{\omega\in\Omega}\) is a stationary projective
family on \(Q\), then \(\eta^\omega\) has Haar torus marginal \(m\) for
\(\PP\)-a.e. \(\omega\).
\end{theorem}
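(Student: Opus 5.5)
The plan is to push the stationary projective family forward to the torus and invoke the equivalence \((1)\Leftrightarrow(5)\) of Theorem~\ref{thm:ergodicity-random-environment}. Let \(\pi_{\TT}:Q\to\TT^d\) denote the projection \((\theta,\hat p)\mapsto\theta\), and set
\[
        \lambda^\omega:=(\pi_{\TT})_*\eta^\omega\in\Prob(\TT^d).
\]
Measurability of \(\omega\mapsto\lambda^\omega\) follows from that of \(\omega\mapsto\eta^\omega\), since for \(f\in C(\TT^d)\) we have \(\int f\,d\lambda^\omega=\int_Q f\circ\pi_{\TT}\,d\eta^\omega\). The first step is therefore to show that \(\lambda^\bullet\) is a stationary torus family in the sense of Section~\ref{sec:model}.

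The key identity is the equivariance of \(\pi_{\TT}\) with the translation coordinate: for \(g=(\alpha,A)\in G\) and \(x=(\theta,\hat p)\in Q\),
\[
        \pi_{\TT}(g\cdot x)=\theta+\mathfrak a(g)=\tau_{\mathfrak a(g)}(\pi_{\TT}(x)).
\]
Hence \((\pi_{\TT})_*g_*\eta^\omega=(\tau_{\mathfrak a(g)})_*\lambda^\omega\). Next I apply \((\pi_{\TT})_*\) to the averaged stationarity relation \eqref{eq:averaged-stationary}; it commutes with the \(\nu^\omega\)-integral, as one checks by testing against \(f\in C(\TT^d)\) and using Fubini. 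This gives, for \(\PP\)-a.e. \(\omega\),
\[
        \lambda^{\Theta\omega}
        =\int_G(\tau_{\mathfrak a(g)})_*\lambda^\omega\,d\nu^\omega(g)
        =\int_{\TT^d}(\tau_\alpha)_*\lambda^\omega\,d\mu^\omega(\alpha)
        =\mu^\omega*\lambda^\omega .
\]
The middle equality uses \(\mu^\omega=\mathfrak a_*\nu^\omega\) together with the change of variables formula. Thus \(\lambda^\bullet\) is a stationary torus family.

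Finally, ergodicity of \((\mathfrak F,\rho)\) is statement \((1)\) of Theorem~\ref{thm:ergodicity-random-environment}, so statement \((5)\) holds. Every stationary torus family is then the Haar torus family, and so \(\lambda^\omega=m\) for \(\PP\)-a.e. \(\omega\), which is the claim.

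There is no serious obstacle. The one point to check carefully is that the exceptional null sets are handled consistently. The stationarity of \(\eta^\bullet\) holds off a null set, and so the stationarity of \(\lambda^\bullet\) holds off the same null set. Statement \((5)\) is itself formulated up to \(\PP\)-null sets, so the argument goes through. It is also worth noting that the projective coordinate plays no role: only the fibered torus-translation structure of the action is used.
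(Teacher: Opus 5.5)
Your proposal is correct and follows essentially the same route as the paper: you push \(\eta^\omega\) forward to the torus, use \(\pi(g\cdot x)=\pi(x)+\mathfrak a(g)\) to get \(\lambda^{\Theta\omega}=\mu^\omega*\lambda^\omega\), and then apply item (5) of Theorem~\ref{thm:ergodicity-random-environment}. The paper argues in exactly this way, and your remarks on measurability and null sets are the right checks.
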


\begin{proof}
Let
\(\pi_{\Sigma}:Q=\TT^d\times \mathbb P(\RR^r)\to\TT^d, \ \pi_{\Sigma}(\theta,\hat p):=\theta\),
and set \(\lambda^\omega:=(\pi_{\Sigma})_*\eta^\omega\).  The family
\(\lambda^\bullet\) is measurable.  For \(\varphi\in C(\TT^d,\mathbb C)\),
stationarity of \(\eta^\bullet\) and
\(\pi_{\Sigma}(g\cdot x)=\pi_{\Sigma}(x)+\mathfrak a(g)\) give, for
\(\PP\)-a.e. \(\omega\),
\[
\begin{aligned}
\int_{\TT^d}\varphi\,d\lambda^{\Theta\omega}
&=
\int_G\int_Q
\varphi(\pi_{\Sigma}(g\cdot x))\,d\eta^\omega(x)\,d\nu^\omega(g) \\
&=
\int_{\TT^d}\int_{\TT^d}
\varphi(\theta+\alpha)\,d\mu^\omega(\alpha)\,d\lambda^\omega(\theta) \\
&=
\int_{\TT^d}\varphi\,d(\mu^\omega*\lambda^\omega).
\end{aligned}
\]
Hence
\(\lambda^{\Theta\omega}=\mu^\omega*\lambda^\omega \qquad \text{for }\PP\text{-a.e. }\omega\).
Thus \(\lambda^\bullet\) is a stationary torus family.  By item \textup{(5)} of
Theorem~\ref{thm:ergodicity-random-environment}, mixed-base ergodicity makes the Haar
torus family the only stationary torus family.  Therefore \(\lambda^\omega=m\) for
\(\PP\)-a.e. \(\omega\).
\end{proof}

Under the hypotheses of Theorem~\ref{prop:Haar-marginal}, we use the
following \emph{Haar projective disintegration}.  For every stationary projective family
\(\eta^\bullet\in\mathscr S_Q\), choose versions such that
\[
        \eta^\omega(d\theta,d\hat p)
        =
        m(d\theta)\,\eta^\omega_\theta(d\hat p)
        \qquad
        \text{for }\PP\text{-a.e. }\omega,
\]
where \((\omega,\theta)\mapsto\eta^\omega_\theta\in\Prob(\mathbb P(\RR^r))\) is
jointly measurable.  Existence and uniqueness of this disintegration follow
from \cite[Theorem~8.5]{Kallenberg21}; the conditional kernel is unique up to a
\((\PP\otimes m)\)-null set.  For \(B\in\SL_r(\RR)\), the notation
\(B_*\eta^\omega_\theta\) denotes the push-forward of \(\eta^\omega_\theta\) under
the projective action of \(B\) on \(\mathbb P(\RR^r)\).

With this convention, averaged stationarity becomes the projective-fiber equation
\begin{equation}\label{eq:stationary-disintegration}
        \eta^{\Theta\omega}_{\theta}
        =
        \int_G
        \mathcal A(g)(\theta-\mathfrak a(g))_*
        \eta^\omega_{\theta-\mathfrak a(g)}
        \,d\nu^\omega(g)
\end{equation}
for \(\PP\)-a.e. \(\omega\) and \(m\)-a.e. \(\theta\).  Indeed, for every
\(\Phi\in C(Q)\), averaged stationarity and the Haar disintegration give
\[
\begin{aligned}
\int_Q \Phi\,d\eta^{\Theta\omega}
&=
\int_G\int_{\TT^d}\int_{\mathbb P(\RR^r)}
\Phi\bigl(\theta+\mathfrak a(g),
\mathcal A(g)(\theta)\hat p\bigr)
\,d\eta^\omega_\theta(\hat p)\,dm(\theta)\,d\nu^\omega(g) \\
&=
\int_{\TT^d}\int_G\int_{\mathbb P(\RR^r)}
\Phi\bigl(\theta,\mathcal A(g)(\theta-\mathfrak a(g))\hat p\bigr)
\,d\eta^\omega_{\theta-\mathfrak a(g)}(\hat p)\,
d\nu^\omega(g)\,dm(\theta),
\end{aligned}
\]
where the second equality uses translation invariance of \(m\).  Uniqueness of
Rokhlin disintegration over \(m\) yields
\eqref{eq:stationary-disintegration}.

If \(\eta^\bullet\) is exact equivariant and the mixed base is ergodic, then the same
Haar disintegration gives the projective-fiber form of exact equivariance.  After choosing the
versions above on a common full-measure set of environments, there exists a
Borel set \(\Omega_0\subset\Omega\) with \(\PP(\Omega_0)=1\) such that, for every
\(\omega\in\Omega_0\) and every \(g\in\supp\nu^\omega\),
\begin{equation}\label{eq:exact-equivariant-disintegration}
        \mathcal A(g)(\theta)_*\eta^\omega_\theta
        =
        \eta^{\Theta\omega}_{\theta+\mathfrak a(g)}
\end{equation}
for \(m\)-a.e. \(\theta\).  The exceptional \(m\)-null set may depend on
\((\omega,g)\).

\subsection{Environment-Wise Furstenberg Formula}

We now define the one-step projective growth functional on \(\mathscr S_Q\) and
compare its maximum with the annealed operator-norm growth. 

For \(B\in\SL_r(\RR)\) and \(\hat p\in\mathbb P(\RR^r)\), define the
\emph{projective growth observable}
\begin{equation}\label{eq:euclidean-projective-growth}
        \sigma(B,\hat p)
        :=
        \log\frac{\|Bp\|}{\|p\|},
        \qquad p\in\hat p,
        \quad p\neq0 .
\end{equation}
This is independent of the representative \(p\). If \(\|p\|=1\), then
\(\sigma(B,\hat p)=\log\|Bp\|\).

For \(g=(\alpha,A)\in G\) and \(x=(\theta,\hat p)\in Q\), set
\(\Psi(g,x):=\sigma(A(\theta),\hat p)\)
and
\[
        \psi(\omega,x):=\int_G \Psi(g,x)\,d\nu^\omega(g).
\]

\begin{lemma}[Integrability of the averaged projective observable]
\label{lem:psi-L1-CQ}
Under the logarithmic integrability condition~\eqref{eq:log-integrability-G},
\(\psi\in L^1(\Omega;C(Q))\).
\end{lemma}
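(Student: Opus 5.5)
The plan is to verify the three defining requirements of \(L^1(\Omega;C(Q))\) in turn. These are joint measurability of \(\psi\), continuity of \(\psi(\omega,\cdot)\) on \(Q\) for \(\PP\)-a.e. \(\omega\), and integrability of \(\omega\mapsto\sup_{x\in Q}|\psi(\omega,x)|\). All three rest on one pointwise bound for \(\Psi\).

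\textbf{Uniform bound.} For \(B\in\SL_r(\RR)\) and unit \(p\),
\[
\|B^{-1}\|^{-1}\le\|Bp\|\le\|B\|.
\]
Using the inverse estimate recorded after \eqref{eq:log-integrability-G}, this gives
\[
|\Psi(g,x)|\le \max\bigl\{\log^+\|\mathcal A(g)(\theta)\|,\ \log^+\|\mathcal A(g)(\theta)^{-1}\|\bigr\}\le C_{\mathrm{inv}}\Lambda_{\mathcal A}(g)
\]
for all \(x=(\theta,\hat p)\in Q\). Hence
\[
\sup_{x\in Q}|\psi(\omega,x)|\le C_{\mathrm{inv}}\int_G\Lambda_{\mathcal A}\,d\nu^\omega.
\]
By \eqref{eq:log-integrability-G} the right-hand side is \(\PP\)-integrable, and in particular finite for \(\PP\)-a.e. \(\omega\). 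So \(\psi(\omega,x)\) is well defined for such \(\omega\), and the integrability requirement follows. On the null set where the integral is infinite, we set \(\psi:=0\).

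\textbf{Continuity and measurability.} First, \(\Psi\) is continuous on \(G\times Q\). The map \((A,\theta)\mapsto A(\theta)\) is continuous from \(C^0(\TT^d,\SL_r(\RR))\times\TT^d\) into \(\SL_r(\RR)\) in the uniform topology, and \((B,\hat p)\mapsto\log\|Bp\|/\|p\|\) is continuous on \(\SL_r(\RR)\times\mathbb P(\RR^r)\) because \(Bp\neq0\). Now fix a good \(\omega\) and let \(x_k\to x\). Then \(\Psi(g,x_k)\to\Psi(g,x)\) for every \(g\), and the domination by \(C_{\mathrm{inv}}\Lambda_{\mathcal A}(g)\in L^1(\nu^\omega)\) lets dominated convergence give \(\psi(\omega,x_k)\to\psi(\omega,x)\).

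For joint measurability, \(\Psi\) is Borel on \(G\times Q\) and \(\omega\mapsto\nu^\omega\) is a measurable kernel. Truncating, \(\Psi_N:=\max(-N,\min(\Psi,N))\) gives \(\omega\)-integrals \(\int\Psi_N(g,x)\,d\nu^\omega(g)\) that are jointly measurable in \((\omega,x)\) by the standard kernel (monotone class) argument. These converge pointwise to \(\psi\) on the good set by dominated convergence.

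\textbf{Expected obstacle.} The only delicate point is measurability of \(\Lambda_{\mathcal A}\), which is needed so that the domination is meaningful. This is immediate once one notes that \(\Lambda_{\mathcal A}\) is continuous on \(G\), being a supremum over the compact \(\TT^d\) of continuous functions that vary uniformly continuously in the uniform topology. Everything else is routine.
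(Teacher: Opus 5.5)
Your proof is correct and follows essentially the same route as the paper. Both use a uniform bound \(|\Psi(g,x)|\le C_{\mathrm{inv}}\Lambda_{\mathcal A}(g)\) from the two-sided estimate \(\|B^{-1}\|^{-1}\le\|Bp\|\le\|B\|\) (the paper's bound carries a harmless extra \(+1\)), then dominated convergence for continuity in \(x\), kernel measurability for joint measurability, and integration of the bound for the \(L^1\) norm; your truncation step and the explicit continuity of \(\Psi\) and \(\Lambda_{\mathcal A}\) only spell out details the paper states in one line.
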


\begin{proof}
The inverse estimate following \eqref{eq:log-integrability-G} gives the uniform
bound
\begin{equation}\label{eq:Psi-two-sided-bound}
        |\Psi(g,x)|
        \le
        C_{\mathrm{inv}}\bigl(1+\Lambda_{\mathcal A}(g)\bigr)
        \qquad
        (g\in G,
        \ x\in Q).
\end{equation}
Indeed, if \(g=(\alpha,A)\), \(x=(\theta,\hat p)\), and \(p\in\hat p\) is chosen
with \(\|p\|=1\), then
\[
        \log\|A(\theta)p\|
        \le
        \log^+\|A(\theta)\|
        \le
        \Lambda_{\mathcal A}(g),
\]
while
\[
        -\log\|A(\theta)p\|
        \le
        \log^+\|A(\theta)^{-1}\|
        \le
        C_{\mathrm{inv}}\Lambda_{\mathcal A}(g).
\]
Thus \eqref{eq:Psi-two-sided-bound} is integrable with respect to
\(\PP(d\omega)\nu^\omega(dg)\).

For \(\PP\)-a.e. \(\omega\), dominated convergence gives
\(\psi(\omega,\cdot)\in C(Q)\).  The measurability of \((g,x)\mapsto\Psi(g,x)\)
and of \(\omega\mapsto\nu^\omega\) gives joint measurability of
\((\omega,x)\mapsto\psi(\omega,x)\).  Since \(Q\) is compact metric and \(C(Q)\)
is separable, this is Bochner measurability of
\(\omega\mapsto\psi(\omega,\cdot)\).  Finally,
\[
        \|\psi(\omega,\cdot)\|_{C(Q)}
        \le
        \int_G C_{\mathrm{inv}}\bigl(1+\Lambda_{\mathcal A}(g)\bigr)
        \,d\nu^\omega(g),
\]
and the right-hand side is integrable in \(\omega\).
\end{proof}

For \(\eta^\bullet\in\mathscr S_Q\), define the Furstenberg functional
\begin{equation}\label{eq:alpha-env}
\alpha(\eta^\bullet)
:=
\int_\Omega\int_Q
\psi(\omega,x)\,d\eta^\omega(x)\,d\PP(\omega).
\end{equation}
Identifying \(\eta^\bullet\) with the Young measure
\(\lambda_\eta(d\omega,dx)=d\PP(\omega)d\eta^\omega(x)\), the set
\(\mathscr S_Q\) is the fixed-point set of the continuous affine map
\(\mathcal P\) on \(\mathscr Y_{\PP}(Q)\).  Hence \(\mathscr S_Q\) is compact in
the stable topology.  It is nonempty by Proposition~\ref{prop:stationary-existence}
and convex by linearity of \eqref{eq:averaged-stationary}.  Since
\(\psi\in L^1(\Omega;C(Q))\), the functional \(\alpha\) is continuous and affine
on \(\mathscr S_Q\).  Therefore it attains its maximum:
\[
        \beta:=\max_{\eta^\bullet\in\mathscr S_Q}\alpha(\eta^\bullet).
\]

The next two propositions give the annealed Furstenberg--Kifer upper bound.  The
first is a compactness statement for empirical projective laws tested against an
arbitrary stable observable.  Applying it to \(\psi\) gives the fixed-direction
bound, and the projective averaging lemma then upgrades fixed directions to the
operator norm.

\begin{proposition}[Relative Furstenberg--Kifer compactness bound]
\label{lem:annealed-relative-FK-upper}
Let \(\varphi\in L^1(\Omega;C(Q))\) be real-valued.  Fix
\(\hat v\in\mathbb P(\RR^r)\), and define
\(X_0=(\theta,\hat v), \ X_{j+1}:=\xi_j\cdot X_j\).
Let \(\lambda_j\in\Prob(\Omega\times Q)\) be the law of
\((\Theta^j\omega,X_j)\) under
\(d\rho(\omega,\xi,\theta) = d\PP(\omega)\,d\mathbf P^\omega(\xi)\,dm(\theta)\),
and set
\(
        \bar\lambda_n:=\frac1n\sum_{j=0}^{n-1}\lambda_j .
\)
Then
\[
\limsup_{n\to\infty}
\int_{\Omega\times Q}\varphi\,d\bar\lambda_n
\le
\sup_{\eta^\bullet\in\mathscr S_Q}
\int_\Omega\int_Q
\varphi(\omega,x)\,d\eta^\omega(x)\,d\PP(\omega).
\]
\end{proposition}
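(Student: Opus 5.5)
The plan is the standard Krylov--Bogolyubov argument, carried out in the Young-measure space \(\mathscr Y_{\PP}(Q)\) with the stable topology and the affine operator \(\mathcal P\) from the proof of Proposition~\ref{prop:stationary-existence}.

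\emph{Step 1: the empirical laws are Young measures and \(\mathcal P\) propagates them.} Since \(\Theta\) preserves \(\PP\), the \(\Omega\)-marginal of each \(\lambda_j\) is \(\PP\). Hence \(\lambda_j\in\mathscr Y_{\PP}(Q)\), and by convexity \(\bar\lambda_n\in\mathscr Y_{\PP}(Q)\). Next we check \(\lambda_{j+1}=\mathcal P\lambda_j\). Under \(\mathbf P^\omega=\bigotimes_{n\ge0}\nu^{\Theta^n\omega}\), the coordinate \(\xi_j\) has law \(\nu^{\Theta^j\omega}\) and is independent of \(X_j\), which depends only on \((\theta,\xi_0,\dots,\xi_{j-1})\). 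So for \(\phi\in L^1(\Omega;C(Q))\),
\[
\int\phi\,d\lambda_{j+1}
=\int \phi(\Theta^{j+1}\omega,\xi_j\cdot X_j)\,d\rho
=\int (\mathcal L\phi)(\Theta^j\omega,X_j)\,d\rho
=\int \mathcal L\phi\,d\lambda_j .
\]
This is exactly the dual definition \eqref{eq:def-P-on-young}, so \(\lambda_{j+1}=\mathcal P\lambda_j\). Summing and telescoping gives
\[
\mathcal P\bar\lambda_n-\bar\lambda_n=\tfrac1n(\lambda_n-\lambda_0).
\]
Pairing this with any \(\phi\in L^1(\Omega;C(Q))\) yields a quantity bounded by \(\tfrac2n\int_\Omega\|\phi(\omega,\cdot)\|_{C(Q)}\,d\PP\), which tends to \(0\).

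\emph{Step 2: extract a subsequence and identify its limit.} Choose \(n_k\to\infty\) along which \(\int\varphi\,d\bar\lambda_{n_k}\) tends to the \(\limsup\). By compactness of \(\mathscr Y_{\PP}(Q)\) in the stable topology, some subnet of \((\bar\lambda_{n_k})\) converges to a limit \(\lambda_\infty\in\mathscr Y_{\PP}(Q)\). A subnet is enough, since we only pass limits through continuous pairings; metrizability is not needed. Stable continuity of \(\mathcal P\), together with Step 1, gives \(\mathcal P\lambda_\infty=\lambda_\infty\). For this, pair both sides against an arbitrary \(\phi\) and use \(\mathcal L\phi\in L^1(\Omega;C(Q))\). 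By the disintegration argument in the proof of Proposition~\ref{prop:stationary-existence}, \(\lambda_\infty=\PP(d\omega)\eta^\omega(dx)\) with \(\eta^\bullet\in\mathscr S_Q\).

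\emph{Step 3: conclude.} Since \(\varphi\in L^1(\Omega;C(Q))\), the map \(\lambda\mapsto\int\varphi\,d\lambda\) is stable-continuous. The \(\limsup\) therefore equals \(\int\varphi\,d\lambda_\infty\le\sup_{\mathscr S_Q}\int_\Omega\int_Q\varphi\,d\eta^\omega\,d\PP\).

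The main point to check carefully is the identity \(\lambda_{j+1}=\mathcal P\lambda_j\). It needs the conditional independence of \(\xi_j\) from \((\theta,\xi_0,\dots,\xi_{j-1})\) given \(\omega\), which comes from the product-kernel structure of \(\mathbf P^\omega\). It also needs the joint measurability of \((\omega,x)\mapsto(\mathcal L\phi)(\omega,x)\), which was established earlier. The rest is routine.
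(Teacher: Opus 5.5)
Your proposal is correct and follows essentially the same Krylov--Bogolyubov argument as the paper. Both proofs check the \(\Omega\)-marginal, extract a stable limit along a limsup-realizing sequence, use the one-step identity \(\int\phi\,d\lambda_{j+1}=\int\mathcal L\phi\,d\lambda_j\) with telescoping, and pass to the limit to obtain averaged stationarity. The differences are cosmetic: you package that identity as \(\lambda_{j+1}=\mathcal P\lambda_j\), and you extract a convergent subnet rather than ``passing to a subsequence.'' The subnet phrasing avoids the (true but unstated) metrizability of the stable topology that the paper's subsequence step implicitly relies on.
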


\begin{proof}
Each \(\lambda_j\) has \(\Omega\)-marginal \(\PP\): for bounded measurable
\(a:\Omega\to\RR\),
\[
\int_{\Omega\times Q}a(\omega)\,d\lambda_j(\omega,x)
=
\int_\Omega a(\Theta^j\omega)\,d\PP(\omega)
=
\int_\Omega a(\omega)\,d\PP(\omega).
\]
Thus \(\lambda_j,\bar\lambda_n\in\mathscr Y_{\PP}(Q)\).  Choose
\(n_k\to\infty\) along the limsup.  By compactness of \(\mathscr Y_{\PP}(Q)\),
passing to a subsequence gives
\(\bar\lambda_{n_k}\to\lambda\) stably for some
\(\lambda\in\mathscr Y_{\PP}(Q)\).  Since \(\varphi\in L^1(\Omega;C(Q))\),
\[
\int_{\Omega\times Q}\varphi\,d\lambda
=
\lim_{k\to\infty}\int_{\Omega\times Q}\varphi\,d\bar\lambda_{n_k}
=
\limsup_{n\to\infty}\int_{\Omega\times Q}\varphi\,d\bar\lambda_n .
\]

It remains to show that \(\lambda\) is stationary.  For
\(\phi\in L^1(\Omega;C(Q))\), use the same operator
\[
        (\mathcal L\phi)(\omega,x)
        :=
        \int_G \phi(\Theta\omega,g\cdot x)\,d\nu^\omega(g).
\]
Then \(\mathcal L\phi\in L^1(\Omega;C(Q))\).  If
\(\mathcal F_j:=\sigma(\omega,\theta,\xi_0,\dots,\xi_{j-1})\), then \(X_j\) is
\(\mathcal F_j\)-measurable and, conditionally on \(\mathcal F_j\), the law of
\(\xi_j\) is \(\nu^{\Theta^j\omega}\).  Hence
\[
        \int_{\Omega\times Q}\phi\,d\lambda_{j+1}
        =
        \int_{\Omega\times Q}\mathcal L\phi\,d\lambda_j .
\]
Averaging in \(j\) yields
\[
\int_{\Omega\times Q}(\phi-\mathcal L\phi)\,d\bar\lambda_n
=
\frac1n
\left(
\int_{\Omega\times Q}\phi\,d\lambda_0
-
\int_{\Omega\times Q}\phi\,d\lambda_n
\right).
\]
The right-hand side tends to zero because all \(\lambda_j\) have
\(\Omega\)-marginal \(\PP\) and
\[
\left|\int_{\Omega\times Q}\phi\,d\lambda_j\right|
\le
\int_\Omega\|\phi(\omega,\cdot)\|_{C(Q)}\,d\PP(\omega).
\]
Passing to the stable limit gives
\[
        \int_{\Omega\times Q}\phi\,d\lambda
        =
        \int_{\Omega\times Q}\mathcal L\phi\,d\lambda
        \qquad
        \forall\,\phi\in L^1(\Omega;C(Q)).
\]
Writing \(\lambda(d\omega,dx)=d\PP(\omega)d\eta^\omega(x)\), this is exactly
\eqref{eq:averaged-stationary}.  Thus \(\eta^\bullet\in\mathscr S_Q\), and the
limsup is bounded by the stated supremum.
\end{proof}

\begin{proposition}[Fixed-direction annealed upper bound]
\label{lem:annealed-env-FK-upper}
Assume that \eqref{eq:log-integrability-G} holds.  Let \(v\in\RR^r\) be a fixed
Euclidean unit vector.  Then
\[
\limsup_{n\to\infty}
\int_{\Omega\times G^{\NN}\times\TT^d}
\frac1n\log\|\mathcal A_n(\xi,\theta)v\|\,
 d\rho(\omega,\xi,\theta)
\le
\beta.
\]
\end{proposition}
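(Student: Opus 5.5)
The plan is to telescope the fixed-direction growth into a sum of one-step projective increments, rewrite the resulting average as an integral of \(\psi\) against the empirical measures \(\bar\lambda_n\), and then invoke Proposition~\ref{lem:annealed-relative-FK-upper} with \(\varphi=\psi\).

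\textbf{Telescoping.} Fix the unit vector \(v\) and its direction \(\hat v\). Define \(X_0=(\theta,\hat v)\) and \(X_{j+1}=\xi_j\cdot X_j\), as in Proposition~\ref{lem:annealed-relative-FK-upper}. Then \(X_j=(\theta+\Gamma_j(\xi),\widehat{\mathcal A_j(\xi,\theta)v})\). Since the matrices are invertible, \(\mathcal A_j(\xi,\theta)v\neq0\), and the cocycle identity gives
\[
        \log\|\mathcal A_n(\xi,\theta)v\|
        =
        \sum_{j=0}^{n-1}
        \log\frac{\|\mathcal A_{j+1}(\xi,\theta)v\|}{\|\mathcal A_j(\xi,\theta)v\|}
        =
        \sum_{j=0}^{n-1}\Psi(\xi_j,X_j),
\]
because \(\mathcal A_{j+1}(\xi,\theta)v=\mathcal A(\xi_j)(\theta+\Gamma_j(\xi))\,\mathcal A_j(\xi,\theta)v\), and \(\sigma\) does not depend on the representative. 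Each summand is bounded by \(C_{\mathrm{inv}}(1+\Lambda_{\mathcal A}(\xi_j))\), by \eqref{eq:Psi-two-sided-bound}. Under \(\rho\), the law of \(\xi_j\) given \(\omega\) is \(\nu^{\Theta^j\omega}\), so \(\Theta_*\PP=\PP\) together with \eqref{eq:log-integrability-G} makes every summand \(\rho\)-integrable. Hence the integral of the sum may be split term by term.

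\textbf{Conditioning.} Let \(\mathcal F_j=\sigma(\omega,\theta,\xi_0,\dots,\xi_{j-1})\). The point \(X_j\) is \(\mathcal F_j\)-measurable, and conditionally on \(\mathcal F_j\) the law of \(\xi_j\) is \(\nu^{\Theta^j\omega}\). Therefore
\[
        \EE_\rho\bigl[\Psi(\xi_j,X_j)\mid\mathcal F_j\bigr]
        =
        \int_G\Psi(g,X_j)\,d\nu^{\Theta^j\omega}(g)
        =
        \psi(\Theta^j\omega,X_j),
\]
where Fubini is justified by the integrable bound from the first step. Taking expectations, \(\int\Psi(\xi_j,X_j)\,d\rho=\int\psi\,d\lambda_j\), with \(\lambda_j\) the law of \((\Theta^j\omega,X_j)\). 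Consequently
\[
        \int\frac1n\log\|\mathcal A_n(\xi,\theta)v\|\,d\rho
        =
        \int_{\Omega\times Q}\psi\,d\bar\lambda_n .
\]

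\textbf{Conclusion and main obstacle.} By Lemma~\ref{lem:psi-L1-CQ}, \(\psi\in L^1(\Omega;C(Q))\) and is real-valued. Proposition~\ref{lem:annealed-relative-FK-upper} applied with \(\varphi=\psi\) bounds the limsup by \(\sup_{\eta^\bullet\in\mathscr S_Q}\alpha(\eta^\bullet)=\beta\), and this supremum is attained by compactness. The only delicate point is the conditioning step. The identity \(\psi(\Theta^j\omega,\cdot)\) requires the \(j\)-th coordinate to be sampled from \(\nu^{\Theta^j\omega}\), which holds because \(\mathbf P^\omega\) is the product kernel. The Fubini interchange requires the integrable domination established in the first step. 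All other steps are routine bookkeeping.
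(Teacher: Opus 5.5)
Your proposal is correct and follows the paper's proof essentially step for step. The paper uses the same projective telescoping of \(\log\|\mathcal A_n(\xi,\theta)v\|\) into \(\sum_j\Psi(\xi_j,X_j)\), the same conditioning on \(\mathcal F_j=\sigma(\omega,\theta,\xi_0,\dots,\xi_{j-1})\) to replace each increment by \(\psi(\Theta^j\omega,X_j)\), and then the identification with \(\int\psi\,d\bar\lambda_n\) followed by Proposition~\ref{lem:annealed-relative-FK-upper} with \(\varphi=\psi\), which is admissible by Lemma~\ref{lem:psi-L1-CQ}.
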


\begin{proof}
Let \(\hat v\) be the projective class of \(v\), and let
\(X_j,\lambda_j,\bar\lambda_n\) be the process and empirical laws from
Proposition~\ref{lem:annealed-relative-FK-upper}.  By Lemma~\ref{lem:psi-L1-CQ},
\(\psi\in L^1(\Omega;C(Q))\).

With \(\mathcal F_j:=\sigma(\omega,\theta,\xi_0,\dots,\xi_{j-1})\), the conditional
law of \(\xi_j\) given \(\mathcal F_j\) is \(\nu^{\Theta^j\omega}\).  Therefore
\[
\EE_\rho[\Psi(\xi_j,X_j)\mid\mathcal F_j]
=
\int_G\Psi(g,X_j)\,d\nu^{\Theta^j\omega}(g)
=
\psi(\Theta^j\omega,X_j).
\]
Writing \(X_j=(\theta_j,\hat p_j)\), choose Euclidean unit representatives
\(p_0=v\) and
\[
        p_{j+1}
        =
        \frac{\mathcal A(\xi_j)(\theta_j)p_j}
        {\|\mathcal A(\xi_j)(\theta_j)p_j\|}.
\]
The projective telescoping identity gives
\[
        \sum_{j=0}^{n-1}\Psi(\xi_j,X_j)
        =
        \log\|\mathcal A_n(\xi,\theta)v\|.
\]
Consequently,
\[
\int
\frac1n\log\|\mathcal A_n(\xi,\theta)v\|\,d\rho
=
\frac1n\sum_{j=0}^{n-1}
\int\psi(\Theta^j\omega,X_j)\,d\rho
=
\int_{\Omega\times Q}\psi\,d\bar\lambda_n.
\]
Applying Proposition~\ref{lem:annealed-relative-FK-upper} with
\(\varphi=\psi\) gives the claim.
\end{proof}

Let \(S^{r-1}=\{v\in\RR^r:\|v\|=1\}\).  The reference measure
\(m_{\mathbb P}\) on \(\mathbb P(\RR^r)\) is the push-forward of normalized
rotation-invariant probability measure on \(S^{r-1}\) under the antipodal
quotient map \(S^{r-1}\to\mathbb P(\RR^r)\).

\begin{lemma}[Projective averaging estimate]
\label{lem:projective-averaging}
There exists a constant \(C_r<\infty\) such that, for every \(B\in\SL_r(\RR)\),
\[
        \log\|B\|
        \le
        C_r
        +
        \int_{\mathbb P(\RR^r)}
        \sigma(B,\hat p)\,dm_{\mathbb P}(\hat p).
\]
\end{lemma}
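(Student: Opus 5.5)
We use the singular value decomposition to reduce to a diagonal matrix, and then use a uniform lower bound on the average of \(\log|\langle p,e_1\rangle|\) over the sphere. Write \(B=U D V\) with \(U,V\) orthogonal and \(D=\mathrm{diag}(s_1,\dots,s_r)\), \(s_1=\|B\|\ge\dots\ge s_r>0\). Since the Euclidean norm is rotation invariant and \(m_{\mathbb P}\) is the image of the rotation-invariant measure on \(S^{r-1}\), we have
\[
        \int_{\mathbb P(\RR^r)}\sigma(B,\hat p)\,dm_{\mathbb P}(\hat p)
        =
        \int_{S^{r-1}}\log\|DVp\|\,d\mathrm{vol}(p)
        =
        \int_{S^{r-1}}\log\|Dp\|\,d\mathrm{vol}(p),
\]
where \(\mathrm{vol}\) is the normalized rotation-invariant probability measure on \(S^{r-1}\). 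The first equality uses \(\|Up\|=\|p\|\) and that unit representatives satisfy \(\sigma(B,\hat p)=\log\|Bp\|\). The second uses invariance of \(\mathrm{vol}\) under \(V\). Thus it suffices to treat \(B=D\).

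For a unit vector \(p=(p_1,\dots,p_r)\) we have \(\|Dp\|\ge s_1|p_1|\), so
\[
        \log\|Dp\|\ge\log\|B\|+\log|p_1|=\log\|B\|+\log|\langle p,e_1\rangle|.
\]
Integrating over \(S^{r-1}\) gives
\[
        \int\sigma(B,\cdot)\,dm_{\mathbb P}\ge\log\|B\|-C_r,
        \qquad
        C_r:=-\int_{S^{r-1}}\log|\langle p,e_1\rangle|\,d\mathrm{vol}(p).
\]
This is the claimed inequality, provided \(C_r<\infty\). Note that \(C_r\ge0\) because \(|p_1|\le1\). The constant depends only on \(r\) and not on \(B\). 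The argument also does not use \(\det B=1\); the integral is finite for any invertible \(B\).

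The only real point is the finiteness of \(C_r\), i.e.\ local integrability of \(\log|p_1|\) on the sphere. For \(r=1\) it is trivial. For \(r\ge2\) I would use the known law of \(t=p_1\) under \(\mathrm{vol}\): it has density proportional to \((1-t^2)^{(r-3)/2}\) on \([-1,1]\). So
\[
        C_r=-c_r\int_{-1}^1\log|t|\,(1-t^2)^{(r-3)/2}\,dt.
\]
This integral converges: the singularity at \(t=0\) is logarithmic, hence integrable. At \(t=\pm1\) the factor \((1-t^2)^{(r-3)/2}\) is integrable since \((r-3)/2>-1\), and there \(\log|t|\) is bounded.

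Alternatively, one can avoid the density formula by representing \(p=Z/\|Z\|\) with \(Z\) a standard Gaussian vector. Then
\[
        \EE\log|p_1|=\EE\log|Z_1|-\EE\log\|Z\|.
\]
Both terms are finite: \(\log|Z_1|\) is integrable against the Gaussian density, and \(\EE\log\|Z\|\) is bounded above by \(\tfrac12\log\EE\|Z\|^2=\tfrac12\log r\) and below by \(\EE\log|Z_1|\).
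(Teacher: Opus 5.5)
Your proof is correct and takes essentially the same route as the paper. The paper bounds $\|Bp\|\ge\|B\|\,|\langle p,u\rangle|$ with $u$ a top right singular vector, integrates, and obtains the same constant $C_r=-\int\log|\langle p,e_1\rangle|\,dm_{\mathbb P}$; your SVD reduction to $D$ is that step with the rotation moved onto $B$, and the only real difference is that the paper proves $C_r<\infty$ via the strip estimate $\sigma_{S^{r-1}}\{|\langle p,e_1\rangle|\le t\}\le c_r t$ plus layer-cake, whereas your marginal-density and Gaussian arguments are equally valid alternatives.
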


\begin{proof}
Let \(s_1=\|B\|\), and let \(u\) be a Euclidean unit right singular vector
corresponding to \(s_1\).  For a Euclidean unit representative \(p\in\hat p\),
\(
        \|Bp\|\ge \|B\|\,|\langle p,u\rangle| .
\)
Taking logarithms and integrating with respect to \(m_{\mathbb P}\) gives
\begin{equation}\label{eq:euclidean-projective-averaging}
        \log\|B\|
        \le
        C_r
        +
        \int_{\mathbb P(\RR^r)}
        \log\|Bp\|\,dm_{\mathbb P}(\hat p),
\end{equation}
where
\[
        C_r:=-\int_{\mathbb P(\RR^r)}
        \log|\langle p,e_1\rangle|\,dm_{\mathbb P}(\hat p)<\infty .
\]
For \(r=1\) this is immediate.  For \(r\ge2\), finiteness follows from the
spherical strip estimate
\[
        \sigma_{S^{r-1}}
        \{p\in S^{r-1}:|\langle p,e_1\rangle|\le t\}
        \le c_r t,
        \qquad 0<t<1,
\]
and the layer-cake representation.  Since \(\log\|Bp\|=\sigma(B,\hat p)\),
\eqref{eq:euclidean-projective-averaging} proves the lemma.
\end{proof}

\begin{proposition}[Annealed operator-norm upper bound]
\label{prop:annealed-operator-FK-upper}
Assume that the logarithmic integrability condition
\eqref{eq:log-integrability-G} holds.  Then
\[
\limsup_{n\to\infty}
\int_{\Omega\times G^{\NN}\times\TT^d}
\frac1n\log\|\mathcal A_n(\xi,\theta)\|\,
 d\rho(\omega,\xi,\theta)
\le
\beta.
\]
\end{proposition}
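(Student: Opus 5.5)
The plan is to combine Lemma~\ref{lem:projective-averaging} with Proposition~\ref{lem:annealed-env-FK-upper}, integrating the fixed-direction bound over the reference measure \(m_{\mathbb P}\). First, for each \((\omega,\xi,\theta)\), apply Lemma~\ref{lem:projective-averaging} to \(B=\mathcal A_n(\xi,\theta)\in\SL_r(\RR)\). This gives
\[
        \log\|\mathcal A_n(\xi,\theta)\|
        \le
        C_r+\int_{\mathbb P(\RR^r)}\log\|\mathcal A_n(\xi,\theta)p\|\,dm_{\mathbb P}(\hat p),
\]
with \(p\) the unit representative. Dividing by \(n\) and integrating against \(\rho\), the constant contributes \(C_r/n\to0\).

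Next I apply Fubini--Tonelli to exchange the \(\rho\)- and \(m_{\mathbb P}\)-integrals. This needs joint measurability of \((\omega,\xi,\theta,\hat p)\mapsto\log\|\mathcal A_n(\xi,\theta)p\|\), which is clear by continuity in \(\hat p\) and measurability of the cocycle. It also needs an integrable dominating function. By the SL-inverse estimate and submultiplicativity,
\[
        |\log\|\mathcal A_n(\xi,\theta)p\||
        \le
        C_{\mathrm{inv}}\sum_{j<n}\bigl(1+\Lambda_{\mathcal A}(\xi_j)\bigr).
\]
Each summand has \(\rho\)-integral \(\int_\Omega\int_G(1+\Lambda_{\mathcal A})\,d\nu^{\Theta^j\omega}\,d\PP\). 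By \(\Theta_*\PP=\PP\), this is finite by \eqref{eq:log-integrability-G}. After the exchange,
\[
        \int\frac1n\log\|\mathcal A_n\|\,d\rho
        \le
        \frac{C_r}{n}+\int_{\mathbb P(\RR^r)}a_n(\hat p)\,dm_{\mathbb P}(\hat p),
        \qquad
        a_n(\hat p):=\int\frac1n\log\|\mathcal A_n(\xi,\theta)p\|\,d\rho .
\]

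The main obstacle is exchanging \(\limsup_n\) with the \(m_{\mathbb P}\)-integral, since Proposition~\ref{lem:annealed-env-FK-upper} only gives \(\limsup_n a_n(\hat p)\le\beta\) pointwise in \(\hat p\). The same domination handles this: the bound above gives \(|a_n(\hat p)|\le C_{\mathrm{inv}}\bigl(1+\int\!\int\Lambda_{\mathcal A}\,d\nu^\omega d\PP\bigr)=:K\), uniformly in \(n\) and \(\hat p\). Reverse Fatou on the probability space \((\mathbb P(\RR^r),m_{\mathbb P})\) then yields
\[
        \limsup_n\int a_n\,dm_{\mathbb P}\le\int\limsup_n a_n\,dm_{\mathbb P}\le\beta .
\]
As a fallback avoiding Fatou, I would reprove the fixed-direction bound with the initial direction randomized. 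Take \(X_0=(\theta,\hat p)\) with \(\hat p\sim m_{\mathbb P}\) independent, and rerun the proof of Proposition~\ref{lem:annealed-relative-FK-upper}; it uses nothing about \(\lambda_0\) beyond its \(\Omega\)-marginal being \(\PP\). The same telescoping identity then bounds \(\int a_n\,dm_{\mathbb P}\) by \(\int\psi\,d\bar\lambda_n\), whose limsup is \(\le\beta\). Combining either route with \(C_r/n\to0\) gives the claimed inequality.
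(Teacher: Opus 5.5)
Your proposal is correct and follows essentially the paper's own argument: apply Lemma~\ref{lem:projective-averaging} to $\mathcal A_n(\xi,\theta)$, use the two-sided logarithmic domination (uniform in $\hat p$) to justify Fubini, and move the $\limsup$ inside the $m_{\mathbb P}$-integral by reverse Fatou, using the uniform upper bound on $a_n(\hat p)$ together with Proposition~\ref{lem:annealed-env-FK-upper}. Your randomized-initial-direction fallback is also valid and would bypass the Fatou step, but it is not needed.
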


\begin{proof}
For \(\hat p\in\mathbb P(\RR^r)\), let \(p\in\hat p\) be a Euclidean unit
representative, and set
\[
        a_n(\hat p)
        :=
        \int_{\Omega\times G^{\NN}\times\TT^d}
        \frac1n\log\|\mathcal A_n(\xi,\theta)p\|\,
        d\rho(\omega,\xi,\theta).
\]
The standing logarithmic control gives, for every Euclidean unit \(p\),
\begin{equation}\label{eq:positive-bound-An}
        \log^+\|\mathcal A_n(\xi,\theta)p\|
        \le
        \log^+\|\mathcal A_n(\xi,\theta)\|
        \le
        \sum_{j=0}^{n-1}\Lambda_{\mathcal A}(\xi_j).
\end{equation}
The inverse estimate following \eqref{eq:log-integrability-G} also gives
\[
        -\log\|\mathcal A_n(\xi,\theta)p\|
        \le
        \log^+\|\mathcal A_n(\xi,\theta)^{-1}\|
        \le
        C_{\mathrm{inv}}
        \sum_{j=0}^{n-1}\Lambda_{\mathcal A}(\xi_j).
\]
The right-hand side is integrable with respect to \(\rho\), uniformly in
\(\hat p\), because each \(\xi_j\) has mean logarithmic control
\[
        M_{\mathcal A}
        :=
        \int_\Omega\int_G
        \Lambda_{\mathcal A}(g)\,d\nu^\omega(g)\,d\PP(\omega)<\infty
\]
by \(\Theta\)-invariance of \(\PP\).  Hence Fubini's theorem applies with respect
to \(\rho\otimes m_{\mathbb P}\).

Applying Lemma~\ref{lem:projective-averaging} to
\(B=\mathcal A_n(\xi,\theta)\), dividing by \(n\), and integrating yields
\[
\int
\frac1n\log\|\mathcal A_n(\xi,\theta)\|\,d\rho
\le
\frac{C_r}{n}
+
\int_{\mathbb P(\RR^r)}a_n(\hat p)\,dm_{\mathbb P}(\hat p).
\]
Moreover, \eqref{eq:positive-bound-An} gives the uniform upper bound
\[
        a_n(\hat p)\le M_{\mathcal A}
        \qquad
        \text{for all }\hat p\in\mathbb P(\RR^r)\text{ and }n\ge1.
\]
Fatou's lemma applied to \(M_{\mathcal A}-a_n(\hat p)\ge0\) gives
\[
\limsup_{n\to\infty}
\int_{\mathbb P(\RR^r)}a_n(\hat p)\,dm_{\mathbb P}(\hat p)
\le
\int_{\mathbb P(\RR^r)}
\limsup_{n\to\infty}a_n(\hat p)\,dm_{\mathbb P}(\hat p).
\]
By Proposition~\ref{lem:annealed-env-FK-upper},
\(\limsup_{n\to\infty}a_n(\hat p)\le\beta\) for every \(\hat p\).  Combining the
last three displays proves the proposition.
\end{proof}

\begin{remark}[Projective averaging versus coordinate directions]
The averaging step cannot be replaced by finitely many fixed coordinate
directions: in general, \(\int\max_i f_i\le\max_i\int f_i\) is false.
Lemma~\ref{lem:projective-averaging} supplies the averaged comparison, and the
uniform bound \(a_n(\hat p)\le M_{\mathcal A}\) justifies the reverse Fatou step.
\end{remark}

\begin{theorem}[Environment-wise Furstenberg formula]
\label{thm:environment-wise-furstenberg-formula}
Assume the logarithmic integrability condition
\eqref{eq:log-integrability-G}, and assume that the mixed base system
\((\Omega\times G^{\NN}\times\TT^d,\mathfrak F,\rho)\)
is ergodic.  Then
\(
        L_1(\rho)=\beta.
\)
\end{theorem}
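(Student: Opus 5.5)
The two inequalities \(L_1(\rho)\le\beta\) and \(\beta\le L_1(\rho)\) will be proved separately. The upper bound is essentially already in place. Since the mixed base is ergodic and \eqref{eq:log-integrability-G} holds, Kingman's theorem (as recorded after \eqref{eq:log-integrability-G}) gives
\[
L_1(\rho)=\lim_{n\to\infty}\int\frac1n\log\|\mathcal A_n(\xi,\theta)\|\,d\rho .
\]
Proposition~\ref{prop:annealed-operator-FK-upper} bounds the \(\limsup\) of the right-hand side by \(\beta\), so \(L_1(\rho)\le\beta\).

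For the lower bound, fix a maximizer \(\eta^\bullet\in\mathscr S_Q\), so that \(\alpha(\eta^\bullet)=\beta\); in fact any element of \(\mathscr S_Q\) will do. I will lift it to the projective skew product
\[
\widetilde{\mathscr F}(\omega,\xi,x):=(\Theta\omega,\sigma\xi,\xi_0\cdot x)
\]
on \(\Omega\times G^{\NN}\times Q\), equipped with the measure
\[
d\tilde\rho_\eta(\omega,\xi,x):=d\PP(\omega)\,d\mathbf P^\omega(\xi)\,d\eta^\omega(x).
\]

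The first step is to check that \(\tilde\rho_\eta\) is \(\widetilde{\mathscr F}\)-invariant. This mirrors Proposition~\ref{prop:mixed-base-invariance}. Use the splitting \(\mathbf P^\omega=\nu^\omega\otimes\mathbf P^{\Theta\omega}\) and integrate out \(\xi_0=g\) first. Because the tail \(\sigma\xi\) is independent of \(\xi_0\) given \(\omega\), the \(x\)-integral becomes \(\int_G g_*\eta^\omega\,d\nu^\omega(g)\). By \eqref{eq:averaged-stationary} this equals \(\eta^{\Theta\omega}\), and \(\Theta_*\PP=\PP\) finishes the computation.

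The second step is telescoping. Write \(x=(\theta,\hat p)\) with \(\|p\|=1\). As in the proof of Proposition~\ref{lem:annealed-env-FK-upper},
\[
\log\|\mathcal A_n(\xi,\theta)p\|=\sum_{j=0}^{n-1}\Psi\bigl(\xi_j,\,\xi_{j-1}\cdots\xi_0\cdot x\bigr)
=\sum_{j=0}^{n-1}\Psi(\cdot)\circ\widetilde{\mathscr F}^{\,j}(\omega,\xi,x),
\]
where \(\Psi(\cdot)\) denotes the function \((\omega,\xi,x)\mapsto\Psi(\xi_0,x)\). Each summand is integrable by \eqref{eq:Psi-two-sided-bound}. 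Invariance then gives
\[
\frac1n\int\log\|\mathcal A_n(\xi,\theta)p\|\,d\tilde\rho_\eta=\int\Psi(\xi_0,x)\,d\tilde\rho_\eta=\alpha(\eta^\bullet),
\]
where the last equality comes from conditioning on \(\xi_0\sim\nu^\omega\).

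The third step bounds \(\log\|\mathcal A_n(\xi,\theta)p\|\le\log\|\mathcal A_n(\xi,\theta)\|\). The right-hand side depends on \(x\) only through \(\theta\). Theorem~\ref{prop:Haar-marginal}, which is where mixed-base ergodicity enters, says the \(\theta\)-marginal of \(\eta^\omega\) is \(m\) for a.e. \(\omega\). Hence the push-forward of \(\tilde\rho_\eta\) under \((\omega,\xi,(\theta,\hat p))\mapsto(\omega,\xi,\theta)\) is exactly \(\rho\). This yields
\[
\beta=\alpha(\eta^\bullet)\le\frac1n\int\log\|\mathcal A_n(\xi,\theta)\|\,d\rho
\]
for every \(n\). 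Letting \(n\to\infty\), or using the \(\inf_n\) form of Kingman's formula, gives \(\beta\le L_1(\rho)\).

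I expect the main obstacle to be the invariance verification together with the measurability bookkeeping around it. One must make \(\tilde\rho_\eta\) well defined from a measurable version of \(\omega\mapsto\eta^\omega\) and the kernel \(\omega\mapsto\mathbf P^\omega\). One must also justify the Fubini and conditioning steps using only the almost-everywhere stationarity relation, which is handled by integrating against bounded test functions. The remaining steps are routine given the earlier propositions.
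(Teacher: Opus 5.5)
Your proposal is correct and follows the paper's proof essentially step for step. The upper bound comes from Kingman's formula together with Proposition~\ref{prop:annealed-operator-FK-upper}. The lower bound lifts an arbitrary \(\eta^\bullet\in\mathscr S_Q\) to the \(\mathfrak T\)-invariant measure \(\rho_\eta\) on \(\Omega\times G^{\NN}\times Q\), telescopes \(\Psi\), dominates by the operator norm, and uses the Haar torus marginal (Theorem~\ref{prop:Haar-marginal}) to identify the \((\omega,\xi,\theta)\)-marginal of \(\rho_\eta\) with \(\rho\).
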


\begin{proof}
Kingman's formula under mixed-base ergodicity gives
\[
L_1(\rho)
=
\lim_{n\to\infty}
\int_{\Omega\times G^{\NN}\times\TT^d}
\frac1n\log\|\mathcal A_n(\xi,\theta)\|\,
 d\rho(\omega,\xi,\theta).
\]
Proposition~\ref{prop:annealed-operator-FK-upper} implies \(L_1(\rho)\le\beta\).

For the reverse inequality, fix
\(\eta^\bullet=\{\eta^\omega\}_{\omega\in\Omega}\in\mathscr S_Q\), and define
\[
        d\rho_\eta(\omega,\xi,x)
        =
        d\PP(\omega)\,d\mathbf P^\omega(\xi)\,d\eta^\omega(x)
\]
on \(\Omega\times G^{\NN}\times Q\).  Let
\(\mathfrak T(\omega,\xi,x):=(\Theta\omega,\sigma\xi,\xi_0\cdot x)\).
Stationarity of \(\eta^\bullet\) makes \(\rho_\eta\) invariant under
\(\mathfrak T\).  Indeed, for every bounded measurable \(\Phi\), using
\(\mathbf P^\omega=\nu^\omega\otimes\mathbf P^{\Theta\omega}\),
\eqref{eq:averaged-stationary}, and \(\Theta_*\PP=\PP\),
\[
\begin{aligned}
\int \Phi\circ\mathfrak T\,d\rho_\eta
&=
\int_\Omega\int_G\int_{G^{\NN}}\int_Q
\Phi(\Theta\omega,\zeta,g\cdot x)
\,d\eta^\omega(x)\,d\mathbf P^{\Theta\omega}(\zeta)
\,d\nu^\omega(g)\,d\PP(\omega) \\
&=
\int_\Omega\int_{G^{\NN}}\int_Q
\Phi(\Theta\omega,\zeta,y)
\,d\eta^{\Theta\omega}(y)\,d\mathbf P^{\Theta\omega}(\zeta)\,d\PP(\omega) \\
&=
\int_\Omega\int_{G^{\NN}}\int_Q
\Phi(\omega,\zeta,y)
\,d\eta^\omega(y)\,d\mathbf P^\omega(\zeta)\,d\PP(\omega)
=
\int\Phi\,d\rho_\eta .
\end{aligned}
\]

By \eqref{eq:Psi-two-sided-bound}, \(\Psi(\xi_0,x)\in L^1(\rho_\eta)\), and
\[
\int \Psi(\xi_0,x)\,d\rho_\eta(\omega,\xi,x)
=
\int_\Omega\int_Q
\left(\int_G\Psi(g,x)\,d\nu^\omega(g)\right)
 d\eta^\omega(x)\,d\PP(\omega)
=
\alpha(\eta^\bullet).
\]
For \(x=(\theta,\hat p)\in Q\), let \(p\in\hat p\) be a Euclidean unit
representative, set \(X_0=x\), and define \(X_{j+1}:=\xi_j\cdot X_j\).  The
projective cocycle identity gives
\[
        \sum_{j=0}^{n-1}\Psi(\xi_j,X_j)
        =
        \log\|\mathcal A_n(\xi,\theta)p\|.
\]
Since \(\rho_\eta\) is \(\mathfrak T\)-invariant,
\[
\begin{aligned}
\alpha(\eta^\bullet)
&=
\frac1n
\int \sum_{j=0}^{n-1}\Psi(\xi_j,X_j)\,d\rho_\eta \\
&=
\frac1n\int\log\|\mathcal A_n(\xi,\theta)p\|\,
 d\rho_\eta(\omega,\xi,x) \\
&\le
\frac1n\int\log\|\mathcal A_n(\xi,\theta)\|\,
 d\rho_\eta(\omega,\xi,x).
\end{aligned}
\]

By Theorem~\ref{prop:Haar-marginal}, \(\eta^\omega\) has Haar torus
marginal \(m\) for \(\PP\)-a.e. \(\omega\).  Hence the
\((\omega,\xi,\theta)\)-marginal of \(\rho_\eta\) is exactly \(\rho\), and so
\[
\alpha(\eta^\bullet)
\le
\frac1n
\int_{\Omega\times G^{\NN}\times\TT^d}
\log\|\mathcal A_n(\xi,\theta)\|\,
 d\rho(\omega,\xi,\theta)
\qquad(n\ge1).
\]
Letting \(n\to\infty\) and using Kingman's formula gives
\(\alpha(\eta^\bullet)\le L_1(\rho)\).  Since
\(\eta^\bullet\in\mathscr S_Q\) was arbitrary, \(\beta\le L_1(\rho)\).  Together
with \(L_1(\rho)\le\beta\), this proves \(L_1(\rho)=\beta\).
\end{proof}

The formula just proved is the input for the zero-exponent argument in
Section~\ref{sec:exact-equivariance}: when \(L_1(\rho)=0\), maximizing stationary
projective families have vanishing Furstenberg functional, and the next section
upgrades the resulting averaged stationarity to exact equivariance.

\section{Zero-Exponent Exact Equivariance, Degeneracy, and Positivity}
\label{sec:exact-equivariance}

This section is the bridge from the variational formula to the positivity
criterion.  Under mixed-base ergodicity and logarithmic integrability we prove
\[
\begin{aligned}
        L_1(\rho)=0
        &\Longrightarrow
        \text{exact equivariance}\\
        &\Longrightarrow
        \begin{gathered}
        \text{environment-wise compact reduction or}\\
        \text{finite support-invariant subbundle family}.
        \end{gathered}
\end{aligned}
\]
The contrapositive gives positivity once both degeneracies are excluded.  The
first implication uses the Furstenberg formula from
Section~\ref{sec:stationary-families}.  We regularize the one-step cocycle laws only by left
multiplication with small constant matrices, obtain uniformly positive
projective densities, identify the smoothed Furstenberg functional with entropy
production, and let the smoothing vanish.  The second implication is geometric:
exact equivariance propagates the positive-mass projective subspaces over each
environment--phase base point.  A proper common support dimension gives a finite
support-invariant subbundle family, whereas the full-dimensional case gives an
invariant field of inner products by Tyler's barycenter.

\subsection{From zero exponent to exact equivariance}

We prove Theorem~\ref{thm:random-cdk-exactness}.  The kernel
\(\omega\mapsto\nu^\omega\) and the closed supports \(\supp\nu^\omega\subset G\)
are the fixed versions chosen in Section~\ref{sec:model}.  Measurable projective
families are represented as Young-measure disintegrations
\(\lambda_\eta(d\omega,dx)=d\PP(\omega)d\eta^\omega(x),\ x\in Q\).
All pointwise identities below are read after intersecting the full-measure sets
associated with a fixed countable convergence-determining subset of \(C(Q)\).
No measurability of the set-valued map \(\omega\mapsto\supp\nu^\omega\) is used.

Set
\(\mathsf H:=\SL_r(\RR),\  \mathfrak m_Q:=m\times m_{\mathbb P}\),
where \(m_{\mathbb P}\) is the Euclidean projective probability measure from
Lemma~\ref{lem:projective-averaging}.  Embed \(\mathsf H\) into \(G\) by
\(\iota(h)=(0,\underline h), \ \underline h(\theta)\equiv h\),
and write \((0,h)\) for \(\iota(h)\).  Thus
\((0,h)\circ(\alpha,A)=(\alpha,hA), \ (hA)(\theta)=hA(\theta)\).
For \(\lambda\in\Prob(\mathsf H)\) and \(\nu\in\Prob(G)\), define the left
constant-matrix convolution
\[
        \int_G F(k)\,d(\lambda\star\nu)(k)
        =
        \int_{\mathsf H}\int_G
        F((0,h)\circ g)\,d\nu(g)d\lambda(h)
\]
for bounded Borel \(F\).  For smoothing measures \(\lambda_n\) we write
\(\nu_n^\omega:=\lambda_n\star\nu^\omega\).
This changes only the projective matrix direction and leaves the torus
translation marginal unchanged.

\begin{lemma}[Projective smoothing by constant matrices]
\label{lem:compactly-supported-smoothing-kernel}
There exist a compact set \(K\subset\mathsf H\), a relatively compact open set
\(U\Subset\mathsf H\), and probabilities \(\lambda_n\in\Prob_c(\mathsf H)\) such
that
\[
        d\lambda_n(h)=\rho_n(h)\,dh,
        \qquad
        \rho_n\in C_c^\infty(U),
        \qquad
        \supp\lambda_n\subset K,
        \qquad
        \lambda_n\Rightarrow\delta_I .
\]
For each \(n\) there are constants \(0<c_n\le C_n<\infty\) and a continuous
kernel
\(K_n: \mathbb P(\RR^r)\times\mathbb P(\RR^r) \longrightarrow (0,\infty)\)
such that, for every \(\hat p\in\mathbb P(\RR^r)\),
\begin{equation}\label{eq:dirac-projective-density-bound}
        \int_{\mathsf H}h_*\delta_{\hat p}\,d\lambda_n(h)
        =K_n(\hat p,\hat q)\,dm_{\mathbb P}(\hat q),
        \qquad
        c_n\le K_n(\hat p,\hat q)\le C_n .
\end{equation}
Consequently, for every \(\zeta\in\Prob(\mathbb P(\RR^r))\),
\begin{equation}\label{eq:projective-density-bound-lambda-n}
        c_n m_{\mathbb P}
        \le
        \int_{\mathsf H}h_*\zeta\,d\lambda_n(h)
        \le
        C_n m_{\mathbb P} .
\end{equation}
The constants may depend on \(n\), \(r\), and \(K\), but not on \(\zeta\).
\end{lemma}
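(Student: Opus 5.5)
Fix a nonempty relatively compact open neighbourhood \(U\) of \(I\) in \(\mathsf H\), say \(U=\{h:\|h-I\|<1/2\}\cap\mathsf H\), and let \(K:=\overline U\). For each \(n\), choose \(\rho_n\in C_c^\infty(U)\) with \(\rho_n\ge0\), \(\int\rho_n\,dh=1\), and \(\supp\rho_n\subset\{h:\|h-I\|<1/n\}\). Here \(dh\) is Haar measure on \(\mathsf H\), which is unimodular and locally equivalent to Lebesgue measure in a chart. Then \(\supp\lambda_n\subset K\), and \(\lambda_n\Rightarrow\delta_I\) because the supports shrink to \(\{I\}\).

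The density statement \eqref{eq:dirac-projective-density-bound} follows from a submersion argument. Fix \(\hat p\). The orbit map \(\pi_{\hat p}:\mathsf H\to\mathbb P(\RR^r)\), \(h\mapsto h\hat p\), is a smooth submersion, since \(\mathsf H\) acts transitively and smoothly. The push-forward of a smooth compactly supported density under a submersion is absolutely continuous with a continuous density, obtained by integrating over the fibres (coarea formula). So
\[
        K_n(\hat p,\hat q)=\int_{\pi_{\hat p}^{-1}(\hat q)}\rho_n\,J_{\hat p}^{-1}\,d\mathrm{vol},
\]
with a continuous Jacobian \(J_{\hat p}\). To get joint continuity in \((\hat p,\hat q)\), reduce everything to one base point. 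Write \(\hat p=k\hat e_1\) with \(k\in SO(r)\). Then \(h\hat p=k(k^{-1}hk)\hat e_1\), and Haar measure is conjugation invariant, so
\[
        K_n(\hat p,\hat q)=\kappa_n^{(k)}(k^{-1}\hat q),
\]
where \(\kappa_n^{(k)}\) is the push-forward density under \(\pi_{\hat e_1}\) of \(\rho_n(k\,\cdot\,k^{-1})\). Rotation invariance of \(m_{\mathbb P}\) is used here. This depends continuously on \(k\), and it is independent of the choice of \(k\) with \(k\hat e_1=\hat p\) by uniqueness of densities. So \(K_n\) is continuous on the compact space \(\mathbb P(\RR^r)^2\).

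\textbf{Main obstacle.} The hard part is strict positivity, \(c_n>0\). With \(\rho_n\) concentrated near \(I\), the measure \(\int h_*\delta_{\hat p}\,d\lambda_n\) lives near \(\hat p\) and vanishes far away, so a density supported only near the identity cannot work. The statement only requires \(\lambda_n\Rightarrow\delta_I\), not shrinking supports. I would therefore take the mixture
\[
        \rho_n=(1-\tfrac1n)\rho_n^{\mathrm{loc}}+\tfrac1n\,\rho^{\mathrm{glob}},
\]
where \(\rho^{\mathrm{glob}}\in C_c^\infty(U)\) is a fixed positive density on a fixed \(U\) containing a neighbourhood of \(SO(r)\). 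Weak convergence to \(\delta_I\) survives because the global part has weight \(1/n\), and \(K,U\) stay fixed.

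With this choice, the orbit \(SO(r)\hat p\) is all of \(\mathbb P(\RR^r)\). The set where \(\rho^{\mathrm{glob}}>0\) contains \(SO(r)\), and \(\pi_{\hat p}\) is an open map. So for every pair \((\hat p,\hat q)\) the fibre \(\pi_{\hat p}^{-1}(\hat q)\) meets the open positivity set of \(\rho^{\mathrm{glob}}\) in a set of positive fibre volume, hence \(K_n(\hat p,\hat q)>0\). A continuous positive function on a compact set is bounded between positive constants, which gives \(c_n\) and \(C_n\).

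Finally, \eqref{eq:projective-density-bound-lambda-n} follows by integrating \eqref{eq:dirac-projective-density-bound} against \(\zeta(d\hat p)\), using Fubini and \(\zeta(\mathbb P(\RR^r))=1\).
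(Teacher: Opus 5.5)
Your proposal is correct and follows essentially the same route as the paper. The main steps coincide: the mixture \((1-\varepsilon_n)\widetilde\lambda_n+\varepsilon_n\sigma\) of shrinking bumps at \(I\) with a fixed smooth density positive on a neighbourhood of \(SO(r)\), the submersion/coarea fibre-integral formula for the density, transitivity of \(SO(r)\) so that every fibre meets the positivity set, and compactness for the two-sided bounds. The one variation is the joint-continuity step. You reduce to the single submersion \(h\mapsto h\hat e_1\) using conjugation-invariance of Haar measure and \(SO(r)\)-invariance of \(m_{\mathbb P}\), then pass to \(\mathbb P(\RR^r)^2\) through the closed quotient map \((k,\hat q)\mapsto(k\hat e_1,\hat q)\); you should state that passage explicitly. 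The paper instead uses the simultaneous submersion \((\hat p,h)\mapsto(\hat p,h\hat p)\) with charts and a partition of unity, and both approaches work.
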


\begin{proof}
For \(r=1\) take \(\lambda_n=\delta_I\).  Assume \(r\ge2\).  Equip
\(\mathsf H\) with a left-invariant Riemannian metric and scale its Riemannian
volume to be the fixed left Haar measure \(dh\).  Equip
\(\mathbb P(\RR^r)\) with its canonical Riemannian metric, scaled so that its
volume is \(m_{\mathbb P}\).  Choose \(U\Subset\mathsf H\) containing
\(SO(r)\), and choose a smooth probability \(\sigma\) supported in \(U\) whose
density is strictly positive on a neighborhood \(V\Subset U\) of \(SO(r)\).
Let \(\widetilde\lambda_n\) be smooth probabilities supported in an exponential
neighborhood of \(I\), with supports shrinking to \(I\).  With
\(\varepsilon_n\downarrow0\), put
\begin{equation}\label{eq:lambda-n-mixture-definition}
        \lambda_n=(1-\varepsilon_n)\widetilde\lambda_n+
        \varepsilon_n\sigma .
\end{equation}
After enlarging one compact set \(K\Subset U\), all supports lie in \(K\), and
\(\lambda_n\Rightarrow\delta_I\).

For fixed \(\hat p\), the map
\(
        \Phi_{\hat p}:\mathsf H\to\mathbb P(\RR^r),
        \Phi_{\hat p}(h)=h\hat p
\)
is a submersion.  Indeed, at \(h\), with \(v=hp\ne0\), the differential along
\(t\mapsto\exp(tX)h\) sends \(X\in\mathfrak{sl}_r(\RR)\) to the class of \(Xv\) in
\(\RR^r/\RR v\), and every such class is realized by a trace-zero matrix.
The manifold coarea formula
\cite[Appendix, pp.~59--61, formula~(A-2)]{Howard93} applies because
\(\Phi_{\hat p}\) is a smooth submersion and \(\rho_n\) is smooth with compact
support.  It gives the density
\[
        K_n(\hat p,\hat q)
        =
        \int_{\Phi_{\hat p}^{-1}(\hat q)}
        \frac{\rho_n(h)}{J_{\Phi_{\hat p}}(h)}
        \,d\mathcal H_{\hat p}^{\hat q}(h),
\]
where \(J_{\Phi_{\hat p}}>0\) is the normal Jacobian and
\(\mathcal H_{\hat p}^{\hat q}\) is the induced Riemannian volume on the fiber.
The coarea formula alone does not supply the required parameter continuity.
For this, use the simultaneous smooth submersion
\[
        \Psi:\mathbb P(\RR^r)\times\mathsf H
        \longrightarrow
        \mathbb P(\RR^r)\times\mathbb P(\RR^r),
        \qquad
        \Psi(\hat p,h)=(\hat p,h\hat p).
\]
Cover the compact set \(\mathbb P(\RR^r)\times K\) by finitely many submersion
charts and choose a smooth partition of unity subordinate to this cover.  In
each chart the corresponding fiber integral is an integral over fixed
Euclidean fiber variables of a continuous compactly supported integrand.
Dominated convergence gives joint continuity in \((\hat p,\hat q)\); summing the
finitely many chart contributions shows that \(K_n\) is continuous and bounded
above.

For the lower bound, let \(\kappa_0\) be the same density for \(\sigma\).  Since
\(SO(r)\) acts transitively on \(\mathbb P(\RR^r)\), every fiber
\(\Phi_{\hat p}^{-1}(\hat q)\) meets \(SO(r)\).  At such an intersection point
the density of \(\sigma\) is positive on an ambient neighborhood, whose
intersection with the smooth fiber is a relatively open set of positive fiber
volume.  Hence \(\kappa_0(\hat p,\hat q)>0\) for every pair.  Its continuity and
compactness of
\(\mathbb P(\RR^r)\times\mathbb P(\RR^r)\) give a uniform lower bound
\(a>0\).  The mixture
\eqref{eq:lambda-n-mixture-definition} gives
\(K_n\ge\varepsilon_n a\), proving \eqref{eq:dirac-projective-density-bound}.
Averaging \eqref{eq:dirac-projective-density-bound} against any
\(\zeta\in\Prob(\mathbb P(\RR^r))\) gives
\eqref{eq:projective-density-bound-lambda-n}.
\end{proof}

\begin{proposition}[Density bounds for smoothed stationary projective families]
\label{cor:smoothed-stationary-density-bounds}
Let \(\lambda_n\) be as in Lemma~\ref{lem:compactly-supported-smoothing-kernel}
and set \(\nu_n^\omega=\lambda_n\star\nu^\omega\).  Stationary projective
families for \(\nu_n^\omega\) exist.  If the mixed base system is ergodic, then
any such stationary projective family \(\eta_n^\bullet\) satisfies
\begin{equation}\label{eq:smoothed-density-two-sided}
        c_n\mathfrak m_Q
        \le
        \eta_n^\omega
        \le
        C_n\mathfrak m_Q
        \qquad
        \text{for }\PP\text{-a.e. }\omega .
\end{equation}
\end{proposition}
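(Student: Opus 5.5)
Existence follows by rerunning Proposition~\ref{prop:stationary-existence} with the kernel \(\omega\mapsto\nu_n^\omega\) in place of \(\nu^\omega\). That proof used only two facts: \(\omega\mapsto\nu_n^\omega\) is a measurable probability kernel on \(G\), and the action \(G\times Q\to Q\) is continuous. The kernel \(\nu_n^\omega=\lambda_n\star\nu^\omega\) is the push-forward of \(\lambda_n\otimes\nu^\omega\) under the continuous map \((h,g)\mapsto(0,h)\circ g\), so it is measurable in \(\omega\). The operator \(\mathcal L\) and the Tychonoff fixed-point argument therefore go through verbatim and produce a stationary projective family \(\eta_n^\bullet\) for \(\nu_n^\bullet\).

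For the density bounds I would first handle the torus marginal. The smoothing leaves the translation marginal unchanged: \(\mathfrak a_*\nu_n^\omega=\mu^\omega\). Hence the translation factor, and by Theorem~\ref{thm:ergodicity-random-environment} \((2)\Leftrightarrow(1)\) also the smoothed mixed base, is ergodic whenever the original mixed base is. Theorem~\ref{prop:Haar-marginal}, applied to \(\nu_n^\bullet\), then gives \((\pi_\Sigma)_*\eta_n^\omega=m\) for \(\PP\)-a.e. \(\omega\), together with the Haar disintegration \(\eta_n^\omega=m(d\theta)\,\eta^\omega_{n,\theta}(d\hat p)\).

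Next I would write the fiber equation \eqref{eq:stationary-disintegration} for \(\nu_n\) and absorb the constant smoothing. For \(g=(\alpha,A)\), the element \((0,h)\circ g\) acts on the fiber over \(\theta-\alpha\) by \(hA(\theta-\alpha)\). This gives, for a.e. \(\omega\) and \(m\)-a.e. \(\theta\),
\[
\eta^{\Theta\omega}_{n,\theta}
=\int_{\mathsf H}h_*\zeta^\omega_\theta\,d\lambda_n(h),
\qquad
\zeta^\omega_\theta:=\int_G A(\theta-\alpha)_*\eta^\omega_{n,\theta-\alpha}\,d\nu^\omega(\alpha,A),
\]
where \(\zeta^\omega_\theta\in\Prob(\mathbb P(\RR^r))\). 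Applying \eqref{eq:projective-density-bound-lambda-n} to \(\zeta^\omega_\theta\) gives \(c_n m_{\mathbb P}\le\eta^{\Theta\omega}_{n,\theta}\le C_n m_{\mathbb P}\) for a.e. \((\omega,\theta)\). Integrating against \(m(d\theta)\) and using that \(\Theta\) is measure preserving and invertible, so that a null set of \(\Theta\omega\) is a null set of \(\omega\), yields \eqref{eq:smoothed-density-two-sided}.

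I expect the main obstacle to be measure-theoretic. The fiber identity must hold for \(m\)-a.e. \(\theta\) simultaneously for a.e. \(\omega\), and \(\zeta^\omega_\theta\) must be a jointly measurable kernel in \((\omega,\theta)\). I would settle both by testing against a countable convergence-determining subset of \(C(\mathbb P(\RR^r))\) times test functions on \(\TT^d\), so that the inequalities can be verified integrally and then upgraded to a.e. statements via uniqueness of disintegration, exactly as in the derivation of \eqref{eq:stationary-disintegration}.
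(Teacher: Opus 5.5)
Your proposal is correct and follows the paper's proof step for step. The steps are: existence from Proposition~\ref{prop:stationary-existence}; ergodicity of the smoothed mixed base because \(\mathfrak a((0,h)\circ g)=\mathfrak a(g)\) leaves the translation marginal unchanged; Haar marginals from Theorem~\ref{prop:Haar-marginal}; the uniform bound \eqref{eq:projective-density-bound-lambda-n} applied to the fiber-stationarity equation; and the passage from \(\Theta\omega\) to \(\omega\) via \(\Theta_*\PP=\PP\). The only difference is cosmetic: you average over \(g\) before applying the smoothing bound, while the paper applies it for each \(g\) and then averages, which is equivalent because \(h_*\) is linear.
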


\begin{proof}
The kernel \(\omega\mapsto\nu_n^\omega\) is measurable, so
Proposition~\ref{prop:stationary-existence} gives stationary projective families.  Since
\(\mathfrak a((0,h)\circ g)=\mathfrak a(g)\), the smoothed and unsmoothed kernels
have the same translation marginal.  Hence the smoothed mixed base is ergodic
whenever the original one is, and Theorem~\ref{prop:Haar-marginal} gives
\((\pi_\Sigma)_*\eta_n^\omega=m\).

Disintegrate
\(\eta_n^\omega(d\theta,d\hat p) =dm(\theta)\eta_{n,\theta}^\omega(d\hat p)\).
Projective-fiber stationarity for \(\nu_n^\omega\) gives, for \(\PP\)-a.e. \(\omega\) and
\(m\)-a.e. \(\theta\),
\begin{equation}\label{eq:smoothed-fiber-stationarity-projective}
        \eta_{n,\theta}^{\Theta\omega}
        =
        \int_G\int_{\mathsf H}
        \bigl(h\mathcal A(g)(\theta-\mathfrak a(g))\bigr)_*
        \eta_{n,\theta-\mathfrak a(g)}^\omega
        \,d\lambda_n(h)d\nu^\omega(g).
\end{equation}
Applying \eqref{eq:projective-density-bound-lambda-n} to the inner projective
probability measure and averaging in \(g\) yields
\(
        c_n m_{\mathbb P}\le\eta_{n,\theta}^{\Theta\omega}\le C_n m_{\mathbb P}
\)
for a.e. \(\theta\).  Since \(\Theta_*\PP=\PP\), the same holds with \(\omega\) in
place of \(\Theta\omega\).  Multiplying by \(dm\) proves
\eqref{eq:smoothed-density-two-sided}.
\end{proof}

For the compact set \(K\) above put
\(C_K:= \sup_{h\in K}\bigl(\log^+\|h\|+ \log^+\|h^{-1}\|\bigr)<\infty\).
The standing logarithmic integrability condition and the inverse-norm estimate
following \eqref{eq:log-integrability-G} imply that, for \(h\in K\), \(g\in G\),
\(\theta\in\TT^d\), and \(\|p\|=1\),
\begin{equation}\label{eq:uniform-log-domination-smoothed}
        \left|\log\|h\mathcal A(g)(\theta)p\|\right|
        \le
        C_K+C_{\mathrm{inv}}(1+\Lambda_{\mathcal A}(g)).
\end{equation}
For a stationary projective family \(\eta_n^\bullet\) of the \(n\)-th
smoothed one-step cocycle law, define
\begin{equation}\label{eq:alpha-n-smoothed}
\alpha_n(\eta_n^\bullet)
:=
\int_\Omega\int_G\int_{\mathsf H}\int_Q
\sigma\bigl(h\mathcal A(g)(\theta),\hat p\bigr)
\,d\eta_n^\omega(\theta,\hat p)
 d\lambda_n(h)d\nu^\omega(g)d\PP(\omega),
\end{equation}
which is absolutely convergent by
\eqref{eq:uniform-log-domination-smoothed}.

For probability measures \(\mu,\nu\) on \(Q\), write
\[
        \mathrm H(\mu\mid\nu)
        :=
        \begin{cases}
        \displaystyle\int_Q\log\!\left(\frac{d\mu}{d\nu}\right)d\mu,
        & \mu\ll\nu,\\[1ex]
        +\infty,&\text{otherwise},
        \end{cases}
\]
with natural logarithms.

\begin{proposition}[Entropy identity for smoothed one-step cocycle laws]
\label{prop:smoothed-entropy-identity}
Assume \eqref{eq:log-integrability-G}.  Fix \(n\), and let \(\eta_n^\bullet\) be a stationary projective family for
\(\nu_n^\omega\) and satisfy
\eqref{eq:smoothed-density-two-sided}.  For \(k=(0,h)\circ g\),
\begin{equation}\label{eq:entropy-identity-smoothed}
\alpha_n(\eta_n^\bullet)
=
\frac1r
\int_\Omega\int_G\int_{\mathsf H}
        \mathrm H\bigl(\eta_n^\omega\mid k^{-1}_*\eta_n^{\Theta\omega}\bigr)
        \,d\lambda_n(h)d\nu^\omega(g)d\PP(\omega).
\end{equation}
In particular \(\alpha_n(\eta_n^\bullet)\ge0\).  Equivalently, the entropy defect
\begin{equation}\label{eq:entropy-defect-definition}
        \mathcal D_n
        :=
\int_\Omega\int_G\int_{\mathsf H}
        \mathrm H\bigl(\eta_n^\omega\mid k^{-1}_*\eta_n^{\Theta\omega}\bigr)
        \,d\lambda_n(h)d\nu^\omega(g)d\PP(\omega)
\end{equation}
is finite and satisfies \(\mathcal D_n=r\alpha_n(\eta_n^\bullet)\).
\end{proposition}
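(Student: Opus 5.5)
The plan is to compute each relative entropy explicitly using the projective Jacobian, and then let averaged stationarity cancel everything except the growth term. First I would fix the densities. By \eqref{eq:smoothed-density-two-sided}, for \(\PP\)-a.e. \(\omega\) we may write \(\eta_n^\omega=f_\omega\,\mathfrak m_Q\) with \(\log c_n\le\log f_\omega\le\log C_n\). Here \((\omega,x)\mapsto f_\omega(x)\) is jointly measurable, for instance as a Radon--Nikodym derivative of the Young measure \(\lambda_{\eta_n}\) with respect to \(\PP\otimes\mathfrak m_Q\). In particular \(\log f_\omega\) is uniformly bounded, and this boundedness is what makes every integral below absolutely convergent.

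The key geometric input is the change-of-variables formula for \(k=(0,h)\circ g=(\alpha,hA)\) acting on \(Q\). The torus part is a translation and preserves \(m\). On \(\mathbb P(\RR^r)\), a matrix \(B\in\SL_r(\RR)\) has Jacobian with respect to \(m_{\mathbb P}\) equal to \(\|Bp\|^{-r}\) at \(\hat p\), for \(\|p\|=1\). This is the standard formula \(d(B^{-1}_*m_{\mathbb P})/dm_{\mathbb P}(\hat p)=\|Bp\|^{-r}\); I would verify it by pushing the Gaussian, or equivalently the radial, decomposition of Lebesgue measure through \(B\), using \(\det B=1\). Hence \(k^{-1}_*\eta_n^{\Theta\omega}\) has \(\mathfrak m_Q\)-density
\[
x=(\theta,\hat p)\longmapsto f_{\Theta\omega}(k\cdot x)\,\|hA(\theta)p\|^{-r},
\]
which is strictly positive, so \(\eta_n^\omega\ll k^{-1}_*\eta_n^{\Theta\omega}\). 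Consequently
\[
\mathrm H\bigl(\eta_n^\omega\mid k^{-1}_*\eta_n^{\Theta\omega}\bigr)
=\int_Q\log f_\omega\,d\eta_n^\omega-\int_Q\log f_{\Theta\omega}(k\cdot x)\,d\eta_n^\omega(x)
+r\int_Q\sigma\bigl(hA(\theta),\hat p\bigr)\,d\eta_n^\omega(x).
\]

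Next I would integrate this identity against \(d\lambda_n(h)\,d\nu^\omega(g)\,d\PP(\omega)\). The three terms are dominated, respectively, by the constant \(|\log c_n|+|\log C_n|\), by the same constant, and by \eqref{eq:uniform-log-domination-smoothed} together with \eqref{eq:log-integrability-G}. So Fubini applies, and this also proves that \(\mathcal D_n\) is finite.

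\begin{itemize}
\item The third term integrates to \(r\,\alpha_n(\eta_n^\bullet)\) by \eqref{eq:alpha-n-smoothed}.
\item For the second term, averaged stationarity for \(\nu_n^\omega\), tested against the bounded measurable function \(\log f_{\Theta\omega}\), gives \(\int\!\int\log f_{\Theta\omega}(k\cdot x)\,d\nu_n^\omega(k)\,d\eta_n^\omega(x)=\int\log f_{\Theta\omega}\,d\eta_n^{\Theta\omega}\). Stationarity is stated for continuous test functions, so it extends to bounded measurable ones by a monotone class argument applied to measures on \(Q\).
\item After integrating in \(\omega\) and using \(\Theta_*\PP=\PP\), the second term equals the first, so the two cancel.
\end{itemize}

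This yields \(\mathcal D_n=r\,\alpha_n(\eta_n^\bullet)\), which is \eqref{eq:entropy-identity-smoothed}. Since relative entropy is nonnegative, \(\alpha_n(\eta_n^\bullet)\ge0\).

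I expect the main obstacle to be the measure-theoretic bookkeeping rather than the algebra. Specifically, three things need care: choosing a jointly measurable version of \(f_\omega\) such that \(f_{\Theta\omega}\circ k\) is measurable in \((\omega,h,g,x)\); applying stationarity to the environment-dependent test function \(\log f_{\Theta\omega}\), which is only measurable rather than continuous; and checking the Jacobian exponent \(r\), since that exponent is exactly the factor \(1/r\) in the statement. I would handle the second point by working at the level of Young measures on \(\Omega\times Q\). There the stationarity identity \(\int\phi\,d\lambda_{\eta_n}=\int\mathcal L_n\phi\,d\lambda_{\eta_n}\) extends from \(L^1(\Omega;C(Q))\) to bounded jointly measurable \(\phi\) by a monotone class argument, and I would apply it with \(\phi(\omega,x)=\log f_\omega(x)\).
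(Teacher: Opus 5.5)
Your proposal is correct and follows essentially the same route as the paper's proof. You write \(\eta_n^\omega=f^\omega\,\mathfrak m_Q\) with two-sided bounds, and you use the projective Jacobian \(d(B^{-1}_*m_{\mathbb P})/dm_{\mathbb P}(\hat p)=\|Bp\|^{-r}\) to express \(d(k^{-1}_*\eta_n^{\Theta\omega})/d\eta_n^\omega\). You then take logarithms, integrate, and cancel the two density terms using smoothed stationarity together with \(\Theta_*\PP=\PP\). Your remarks on joint measurability and on testing stationarity against the bounded measurable function \(\log f_{\Theta\omega}\) make explicit bookkeeping that the paper leaves implicit.
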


\begin{proof}
After modifying on a \(\PP\)-null set, write
\(
        d\eta_n^\omega=f_n^\omega\,d\mathfrak m_Q,
        \ 
        c_n\le f_n^\omega\le C_n,
\)
for both \(\omega\) and \(\Theta\omega\) on the full-measure set used below, with
\((\omega,x)\mapsto f_n^\omega(x)\) measurable.

For \(B\in\SL_r(\RR)\), the Euclidean projective Jacobian formula
\cite[Lemma~3.15]{CDK23} gives
\begin{equation}\label{eq:euclidean-projective-jacobian}
        \frac{d(B^{-1}_*m_{\mathbb P})}{dm_{\mathbb P}}(\hat p)
        =
        \|Bp\|^{-r},
        \qquad \|p\|=1 .
\end{equation}
Here \(B=h\mathcal A(g)(\theta)\in\SL_r(\RR)\), and the torus part of \(k\) is a
Haar-preserving translation.  Thus, for \(x=(\theta,\hat p)\),
\begin{equation}\label{eq:Jk-euclidean}
\begin{aligned}
        J_k(x)
        &:=
        \frac{d(k^{-1}_*\mathfrak m_Q)}{d\mathfrak m_Q}(x)
        =\|h\mathcal A(g)(\theta)p\|^{-r},\\
        \sigma(h\mathcal A(g)(\theta),\hat p)
        &=-\frac1r\log J_k(x).
\end{aligned}
\end{equation}
The density bounds give mutual absolute continuity and
\begin{equation}\label{eq:rn-ratio-smoothed}
        R_{n,\omega,g,h}(x)
        :=
        \frac{d(k^{-1}_*\eta_n^{\Theta\omega})}{d\eta_n^\omega}(x)
        =
        J_k(x)\frac{f_n^{\Theta\omega}(kx)}{f_n^\omega(x)} .
\end{equation}
Taking logarithms in \eqref{eq:rn-ratio-smoothed}, integrating in
\((\omega,g,h,x)\), and using \eqref{eq:uniform-log-domination-smoothed} for
integrability, the two density terms cancel by smoothed stationarity:
\[
\begin{aligned}
&\int \log f_n^{\Theta\omega}(kx)
\,d\eta_n^\omega(x)d\lambda_n(h)d\nu^\omega(g)d\PP(\omega)  \\
&\qquad =
\int_\Omega\int_Q \log f_n^{\Theta\omega}(y)
\,d\eta_n^{\Theta\omega}(y)d\PP(\omega)
=
\int_\Omega\int_Q \log f_n^\omega(y)
\,d\eta_n^\omega(y)d\PP(\omega).
\end{aligned}
\]
Consequently
\[
        \int \log J_k(x)\,d\eta_n^\omega d\lambda_n d\nu^\omega d\PP
        =
        \int \log R_{n,\omega,g,h}(x)\,d\eta_n^\omega d\lambda_n d\nu^\omega d\PP .
\]
By \eqref{eq:Jk-euclidean}, the left-hand side is
\(-r\alpha_n(\eta_n^\bullet)\).  The right-hand side is
\(-\mathcal D_n\), because
\[
        \mathrm H(\eta_n^\omega\mid k^{-1}_*\eta_n^{\Theta\omega})
        =
        -\int_Q\log R_{n,\omega,g,h}\,d\eta_n^\omega .
\]
This proves \eqref{eq:entropy-identity-smoothed}; non-negativity of relative
entropy gives \(\alpha_n\ge0\).
\end{proof}

\begin{proposition}[Removing smoothing in stable limits]
\label{prop:stable-limit-smoothed-families}
Assume \eqref{eq:log-integrability-G}.  Let \(\lambda_n\) and
\(\nu_n^\omega=\lambda_n\star\nu^\omega\) be as above, and let
\(\eta_n^\bullet\) be a stationary projective family for
\(\nu_n^\omega\).  If, after passing to a
subsequence,
\(\eta_n^\bullet\longrightarrow\eta^\bullet\)
stably in \(\mathscr Y_{\PP}(Q)\), then
\(
        \eta^\bullet\in\mathscr S_Q,
        \     \alpha_n(\eta_n^\bullet)\longrightarrow\alpha(\eta^\bullet).
\)
\end{proposition}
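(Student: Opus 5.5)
The plan is to pass to the limit in two identities, both tested against stable observables. The first is the stationarity identity for \(\eta_n^\bullet\). The second is the formula \eqref{eq:alpha-n-smoothed} for \(\alpha_n\). The key device is the operators
\[
(\mathcal L_n\phi)(\omega,x):=\int_G\int_{\mathsf H}\phi\bigl(\Theta\omega,((0,h)\circ g)\cdot x\bigr)\,d\lambda_n(h)\,d\nu^\omega(g),
\]
which we compare with the unsmoothed operator \(\mathcal L\) from Proposition~\ref{prop:stationary-existence}. Stationarity of \(\eta_n^\bullet\) for \(\nu_n^\omega\) reads \(\int\phi\,d\lambda_{\eta_n}=\int\mathcal L_n\phi\,d\lambda_{\eta_n}\) for every \(\phi\in L^1(\Omega;C(Q))\). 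We then write
\[
\int\mathcal L_n\phi\,d\lambda_{\eta_n}=\int\mathcal L\phi\,d\lambda_{\eta_n}+\int(\mathcal L_n\phi-\mathcal L\phi)\,d\lambda_{\eta_n}.
\]
The first term converges to \(\int\mathcal L\phi\,d\lambda_\eta\), because \(\mathcal L\phi\in L^1(\Omega;C(Q))\) and the convergence is stable. The second term is bounded by \(\int_\Omega\|(\mathcal L_n\phi-\mathcal L\phi)(\omega,\cdot)\|_{C(Q)}\,d\PP\). Passing to the limit then gives \(\int\phi\,d\lambda_\eta=\int\mathcal L\phi\,d\lambda_\eta\), which is \eqref{eq:averaged-stationary}, so \(\eta^\bullet\in\mathscr S_Q\). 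This is the same argument as at the end of Proposition~\ref{lem:annealed-relative-FK-upper}.

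The main step is to show \(\int_\Omega\|\mathcal L_n\phi-\mathcal L\phi\|_{C(Q)}\,d\PP\to0\). By density of finite sums \(\sum a_i(\omega)f_i(x)\) in \(L^1(\Omega;C(Q))\), and since both operators are contractions in this norm, it suffices to take \(\phi(\omega,x)=a(\omega)f(x)\) with \(a\) bounded and \(f\in C(Q)\). Note that \(Q\) is compact and \(\mathsf H\) acts continuously on \(\mathbb P(\RR^r)\) by constant matrices. Hence \(f\) is uniformly continuous under this action, and \(\omega_f(h):=\sup_{y\in Q}|f((0,h)\cdot y)-f(y)|\) is continuous in \(h\) and vanishes at \(h=I\). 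Writing \(y=g\cdot x\), we get
\[
|\mathcal L_n\phi-\mathcal L\phi|\le\|a\|_\infty\int\omega_f\,d\lambda_n,
\]
uniformly in \((\omega,x)\). Since \(\omega_f\) is bounded and continuous on \(K\) and \(\lambda_n\Rightarrow\delta_I\), this bound tends to \(0\). This settles the stationarity part.

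For the functional, set \(\psi_n(\omega,x):=\int_G\int_{\mathsf H}\sigma(h\mathcal A(g)(\theta),\hat p)\,d\lambda_n(h)\,d\nu^\omega(g)\), so that \(\alpha_n(\eta_n^\bullet)=\int\psi_n\,d\lambda_{\eta_n}\) and \(\alpha(\eta^\bullet)=\int\psi\,d\lambda_\eta\). We split
\[
\alpha_n(\eta_n^\bullet)-\alpha(\eta^\bullet)=\int(\psi_n-\psi)\,d\lambda_{\eta_n}+\Bigl(\int\psi\,d\lambda_{\eta_n}-\int\psi\,d\lambda_\eta\Bigr).
\]
The second bracket tends to zero by Lemma~\ref{lem:psi-L1-CQ} and stable convergence. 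For the first term we use the cocycle identity
\[
\sigma(hB,\hat p)=\sigma(h,B\hat p)+\sigma(B,\hat p),
\]
which gives
\[
|\psi_n-\psi|(\omega,x)\le\int_{\mathsf H}\sup_{\hat q}|\sigma(h,\hat q)|\,d\lambda_n(h)\le\int_{\mathsf H}\bigl(\log^+\|h\|+\log^+\|h^{-1}\|\bigr)\,d\lambda_n(h).
\]
This bound is uniform in \((\omega,x)\), and it tends to \(0\) because the integrand is continuous on \(K\), vanishes at \(I\), and \(\lambda_n\Rightarrow\delta_I\). Hence \(\alpha_n(\eta_n^\bullet)\to\alpha(\eta^\bullet)\).

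The only delicate point is the measurability and \(L^1(\Omega;C(Q))\) membership of \(\mathcal L_n\phi\) and \(\psi_n\). These follow exactly as in Lemma~\ref{lem:psi-L1-CQ}, using the domination \eqref{eq:uniform-log-domination-smoothed}.
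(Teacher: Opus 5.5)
Your proposal is correct and follows essentially the paper's proof. Stationarity of the limit is handled by the same modulus \(\sup_{y\in Q}|f((0,h)\cdot y)-f(y)|\) together with \(\lambda_n\Rightarrow\delta_I\), and the functional by the same bound \(|\sigma(hB,\hat p)-\sigma(B,\hat p)|\le\log^+\|h\|+\log^+\|h^{-1}\|\). The only differences are minor streamlinings. First, you use this bound directly as a uniform estimate on \(\psi_n-\psi\) and combine it with Lemma~\ref{lem:psi-L1-CQ}, which makes the paper's truncation by \(\Psi_M\) unnecessary. Second, you prove \(L^1(\Omega;C(Q))\)-convergence of \(\mathcal L_n\phi\) for general \(\phi\) by density, whereas the paper tests only against products \(u(\omega)\varphi(x)\), which already suffice.
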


\begin{proof}
For \(\varphi\in C(Q)\), define
\[
\begin{aligned}
        P_n^\omega\varphi(x)
        &:=
        \int_G\int_{\mathsf H}\varphi(((0,h)\circ g)x)
        \,d\lambda_n(h)d\nu^\omega(g),\\
        P^\omega\varphi(x)
        &:=
        \int_G\varphi(gx)\,d\nu^\omega(g).
\end{aligned}
\]
If
\(
        \delta_\varphi(h)=\sup_{y\in Q}|\varphi((0,h)y)-\varphi(y)|,
\)
then \(\delta_\varphi\) is bounded and continuous on \(K\), with
\(\delta_\varphi(I)=0\).  Hence \(P_n^\omega\varphi\to P^\omega\varphi\) in
\(L^1(\Omega;C(Q))\), uniformly in \(\omega\).

Smoothed stationarity says
\(
        \int\varphi\,d\eta_n^{\Theta\omega}
        =
        \int P_n^\omega\varphi\,d\eta_n^\omega
\)
for a.e. \(\omega\).  Multiplying by arbitrary \(u\in L^\infty(\PP)\), changing
variables on the left by \(\Theta\), and passing to the stable limit gives
\[
        \int_\Omega u(\omega)
        \left[
        \int_Q\varphi\,d\eta^{\Theta\omega}
        -
        \int_QP^\omega\varphi\,d\eta^\omega
        \right]d\PP(\omega)=0 .
\]
Intersecting over a countable convergence-determining subset of \(C(Q)\) yields
\(
        \eta^{\Theta\omega}=\int_Gg_*\eta^\omega\,d\nu^\omega(g)
\)
for \(\PP\)-a.e. \(\omega\); hence \(\eta^\bullet\in\mathscr S_Q\).

It remains to pass the functional to the limit.  Let
\[
        \Psi_M=\max\{-M,\min\{\Psi,M\}\}
\]
and set
\[
\begin{aligned}
        F_{n,M}(\omega,x)
        &:=
        \int_G\int_{\mathsf H}\Psi_M((0,h)\circ g,x)
        \,d\lambda_n(h)d\nu^\omega(g),\\
        F_M(\omega,x)
        &:=
        \int_G\Psi_M(g,x)\,d\nu^\omega(g).
\end{aligned}
\]
For \(h\in K\),
\(
        |\sigma(hB,\hat p)-\sigma(B,\hat p)|
        \le
        D(h):=\log^+\|h\|+\log^+\|h^{-1}\|,
\)
where \(D\) is bounded on \(K\) and \(D(I)=0\).  Thus
\(F_{n,M}\to F_M\) in \(L^1(\Omega;C(Q))\), and stable convergence gives
\[
        \int F_{n,M}\,d\eta_n^\omega d\PP
        \longrightarrow
        \int F_M\,d\eta^\omega d\PP
        \qquad(M\text{ fixed}).
\]
The truncation is uniform in \(n\) because
\[
        \sup_{h\in K}\sup_{x\in Q}|\Psi((0,h)\circ g,x)|
        \le
        C_K+C_{\mathrm{inv}}(1+\Lambda_{\mathcal A}(g))
        \in L^1(\PP(d\omega)\nu^\omega(dg)).
\]
Letting \(M\to\infty\) proves
\(\alpha_n(\eta_n^\bullet)\to\alpha(\eta^\bullet)\).
\end{proof}

\begin{proposition}[Vanishing entropy defect implies almost-everywhere exact transport]
\label{prop:vanishing-entropy-defect-exact-transport}
Assume the setting of Proposition~\ref{prop:stable-limit-smoothed-families}, and
assume in addition that the entropy defects \(\mathcal D_n\) in
\eqref{eq:entropy-defect-definition} are finite and satisfy \(\mathcal D_n\to0\).
If \(\eta_n^\bullet\to\eta^\bullet\) stably, then
\begin{equation}\label{eq:nu-ae-exact-from-defect}
        g_*\eta^\omega=\eta^{\Theta\omega}
        \qquad
        \text{for }\PP(d\omega)\nu^\omega(dg)\text{-a.e. }(\omega,g).
\end{equation}
\end{proposition}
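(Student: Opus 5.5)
Our plan is to convert the vanishing relative entropy into a vanishing total-variation statement via Pinsker's inequality, and then pass to the stable limit tested against products of bounded functions of \((\omega,g)\) with continuous functions on \(Q\).

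\emph{Step 1 (Pinsker).} For each \((\omega,g,h)\) with \(k=(0,h)\circ g\), Pinsker gives
\[
\|\eta_n^\omega-k^{-1}_*\eta_n^{\Theta\omega}\|_{TV}\le\sqrt{2\,\mathrm H(\eta_n^\omega\mid k^{-1}_*\eta_n^{\Theta\omega})}.
\]
Since \(k_*\) is a bijection of measures preserving total variation, this also bounds \(\|k_*\eta_n^\omega-\eta_n^{\Theta\omega}\|_{TV}\). Integrating in \((\omega,g,h)\) and applying Cauchy--Schwarz against the probability \(\PP\,\nu^\omega\,\lambda_n\), we obtain
\[
\int\|k_*\eta_n^\omega-\eta_n^{\Theta\omega}\|_{TV}\,d\lambda_n d\nu^\omega d\PP\le\sqrt{2\mathcal D_n}\to0.
\]

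\emph{Step 2 (removing \(h\)).} Fix \(\varphi\in C(Q)\) with \(\|\varphi\|_\infty\le1\) and bounded measurable \(u(\omega,g)\). Step 1 gives
\[
\int u(\omega,g)\left[\int_Q\varphi((0,h)g\,x)\,d\eta_n^\omega(x)-\int_Q\varphi\,d\eta_n^{\Theta\omega}\right]d\lambda_n d\nu^\omega d\PP\to0.
\]
Using \(\delta_\varphi(h)=\sup_y|\varphi((0,h)y)-\varphi(y)|\), which is bounded on \(K\) with \(\delta_\varphi(I)=0\), together with \(\lambda_n\Rightarrow\delta_I\), we may replace \((0,h)g\) by \(g\) at cost \(o(1)\), exactly as in Proposition~\ref{prop:stable-limit-smoothed-families}.

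\emph{Step 3 (stable limit on the enlarged base).} We must pass to the limit in \(\int u(\omega,g)\varphi(gx)\,d\eta_n^\omega(x)\,d\nu^\omega(g)\,d\PP\). Write this as \(\int\Phi_u(\omega,x)\,d\eta_n^\omega d\PP\) with \(\Phi_u(\omega,x)=\int u(\omega,g)\varphi(gx)\,d\nu^\omega(g)\). This function lies in \(L^1(\Omega;C(Q))\): continuity in \(x\) for fixed \(g\) follows from continuity of the action, and dominated convergence gives continuity after integrating in \(g\). Stable convergence therefore yields the limit \(\int u\,\varphi(gx)\,d\eta^\omega d\nu^\omega d\PP\). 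The second term \(\int u(\omega,g)\int\varphi\,d\eta_n^{\Theta\omega}\) equals \(\int\bar u(\omega)\int\varphi\,d\eta_n^{\Theta\omega}\,d\PP\) with \(\bar u(\omega)=\int u(\omega,g)\,d\nu^\omega(g)\). After the change of variables \(\omega\mapsto\Theta^{-1}\omega\) this is a stable pairing, which converges to the corresponding expression for \(\eta\). Hence
\[
\int u(\omega,g)\left[\int\varphi\,d(g_*\eta^\omega)-\int\varphi\,d\eta^{\Theta\omega}\right]d\nu^\omega(g)\,d\PP(\omega)=0
\]
for all bounded measurable \(u\). The bracket is a jointly measurable function of \((\omega,g)\), so it vanishes \(\PP\,\nu^\omega\)-a.e. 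Intersecting over a countable convergence-determining family of \(\varphi\) gives \eqref{eq:nu-ae-exact-from-defect}.

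The main obstacle is Step 3: the limit pairs a \(g\)-dependent integrand with measures that do not depend on \(g\). The key point is that averaging over \(g\) against \(u\) before pairing produces an element of \(L^1(\Omega;C(Q))\), so only stable convergence over the original base \(\Omega\) is needed. This has to be checked carefully, including the joint measurability of \(\Phi_u\).
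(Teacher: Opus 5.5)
Your proposal is correct and follows essentially the same route as the paper. Both arguments use Pinsker plus Cauchy--Schwarz to get an averaged total-variation bound of order \(\mathcal D_n^{1/2}\), test against \(b(\omega,g)\varphi(x)\), and fold the \(g\)-dependence into an \(L^1(\Omega;C(Q))\) integrand, so that stable convergence over \(\Omega\) alone suffices. The one difference is minor: you push \(\eta_n^\omega\) forward by \(k\) instead of pulling \(\eta_n^{\Theta\omega}\) back by \(k^{-1}\), so the smoothing \(h\) acts last and you can remove it with the modulus \(\delta_\varphi(h)\), which is uniform in \(g\), rather than the paper's fixed-\(g\) dominated-convergence argument for \(T_n\to T\).
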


\begin{proof}
Use the normalization
\(
        \|\mu-\nu\|_{\mathrm{TV}}=\frac12\int|d\mu-d\nu|.
\)
Pinsker's inequality, with this total-variation normalization and natural
logarithms \cite[Lemma~2]{Canonne23}, gives, for probabilities
\(\mu\ll\nu\),
\begin{equation}\label{eq:pinsker-used-form}
        \|\mu-\nu\|_{\mathrm{TV}}^2
        \le
        \frac12\mathrm H(\mu\mid\nu).
\end{equation}
For the smoothed measures below, the two-sided density bounds
\eqref{eq:smoothed-density-two-sided} imply mutual absolute continuity and
finite relative entropy, so the inequality applies.  Applying it with
\(
        \mu=\eta_n^\omega,
        \nu=k^{-1}_*\eta_n^{\Theta\omega},
        k=(0,h)\circ g,
\)
and using Cauchy's inequality for the outer probability measure gives
\begin{equation}\label{eq:L1-almost-equivariance-smoothed}
\int_\Omega\int_G\int_{\mathsf H}
\left\|k^{-1}_*\eta_n^{\Theta\omega}-\eta_n^\omega\right\|_{\mathrm{TV}}
\,d\lambda_n(h)d\nu^\omega(g)d\PP(\omega)
\le
\left(\frac{\mathcal D_n}{2}\right)^{1/2}
\longrightarrow0 .
\end{equation}

Fix \(\varphi\in C(Q)\) and bounded Borel \(b\) on \(\Omega\times G\).  Put
\[
\begin{aligned}
I_n^{(1)}&:=
\int_\Omega\int_G\int_{\mathsf H}
        b(\omega,g)
        \int_Q\varphi(k^{-1}y)\,d\eta_n^{\Theta\omega}(y)
        d\lambda_n(h)d\nu^\omega(g)d\PP(\omega),\\
I_n^{(2)}&:=
\int_\Omega\int_G
        b(\omega,g)
        \int_Q\varphi\,d\eta_n^\omega
        d\nu^\omega(g)d\PP(\omega).
\end{aligned}
\]
By \eqref{eq:L1-almost-equivariance-smoothed},
\begin{equation}\label{eq:test-transport-defect-vanishes}
        |I_n^{(1)}-I_n^{(2)}|\to0 .
\end{equation}
Stable convergence immediately gives
\begin{equation}\label{eq:I2-stable-limit-revised}
        I_n^{(2)}\to
        \int_\Omega\int_G b(\omega,g)
        \int_Q\varphi\,d\eta^\omega
        d\nu^\omega(g)d\PP(\omega).
\end{equation}
For \(I_n^{(1)}\), change variables \(\zeta=\Theta\omega\) and define
\[
        T_n(g,y):=\int_{\mathsf H}\varphi(((0,h)\circ g)^{-1}y)\,d\lambda_n(h),
        \qquad
        T(g,y):=\varphi(g^{-1}y).
\]
For fixed \(g\), \(T_n(g,\cdot)\to T(g,\cdot)\) in \(C(Q)\), and the convergence
is dominated by \(\|\varphi\|_\infty\).  Hence
\[
        F_n(\zeta,y):=
        \int_G b(\Theta^{-1}\zeta,g)T_n(g,y)
        \,d\nu^{\Theta^{-1}\zeta}(g)
\]
converges in \(L^1(\Omega;C(Q))\) to the same expression with \(T\) replacing
\(T_n\).  Stable convergence gives
\begin{equation}\label{eq:I1-stable-limit-revised}
        I_n^{(1)}\to
        \int_\Omega\int_G b(\omega,g)
        \int_Q\varphi(g^{-1}y)\,d\eta^{\Theta\omega}(y)
        d\nu^\omega(g)d\PP(\omega).
\end{equation}
Combining \eqref{eq:test-transport-defect-vanishes},
\eqref{eq:I2-stable-limit-revised}, and \eqref{eq:I1-stable-limit-revised}, we
obtain
\[
\int_{\Omega\times G} b(\omega,g)
\left[
        \int_Q\varphi(g^{-1}y)\,d\eta^{\Theta\omega}(y)
        -
        \int_Q\varphi\,d\eta^\omega
\right]d\PP(\omega)d\nu^\omega(g)=0 .
\]
Since \(b\) is arbitrary, the bracket vanishes for
\(\PP(d\omega)\nu^\omega(dg)\)-a.e. \((\omega,g)\).  Intersecting over a
countable convergence-determining subset of \(C(Q)\) gives
\(g^{-1}_*\eta^{\Theta\omega}=\eta^\omega\) a.e., which is
\eqref{eq:nu-ae-exact-from-defect} after pushing forward by \(g\).
\end{proof}

\begin{lemma}[Upgrade from almost-everywhere exact transport to exact equivariance]
\label{lem:support-level-upgrade}
Let \(\eta^\bullet\) be a measurable family in \(\Prob(Q)\).  If
\begin{equation}\label{eq:ae-exact-before-support-upgrade}
        g_*\eta^\omega=\eta^{\Theta\omega}
        \qquad
        \text{for }\PP(d\omega)\nu^\omega(dg)\text{-a.e. }(\omega,g),
\end{equation}
then there is a Borel set \(\Omega_0\subset\Omega\), \(\PP(\Omega_0)=1\), such
that, for every \(\omega\in\Omega_0\) and every \(g\in\supp\nu^\omega\),
\begin{equation}\label{eq:support-level-exact-upgraded}
        g_*\eta^\omega=\eta^{\Theta\omega}.
\end{equation}
\end{lemma}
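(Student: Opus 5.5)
The approach is to use Fubini to get, for almost every environment, a \(\nu^\omega\)-full set of good \(g\), and then to show that the set of \(g\) satisfying the transport identity is closed in \(G\). A closed set of full \(\nu^\omega\)-measure contains \(\supp\nu^\omega\), which is exactly \eqref{eq:support-level-exact-upgraded}.

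\emph{Step 1 (Fubini).} Let
\[
E:=\{(\omega,g)\in\Omega\times G: g_*\eta^\omega=\eta^{\Theta\omega}\}.
\]
Fix a countable convergence-determining family \(\{\varphi_i\}\subset C(Q)\). Then \(E\) is the intersection over \(i\) of the sets where
\[
\int_Q\varphi_i(g\cdot x)\,d\eta^\omega(x)=\int_Q\varphi_i\,d\eta^{\Theta\omega}.
\]
Each such set is measurable: the action \(G\times Q\to Q\) is jointly continuous, \(\eta^\bullet\) is measurable, and \(\Theta\) is measurable. So \(E\) is measurable in \(\Omega\times G\). Hypothesis \eqref{eq:ae-exact-before-support-upgrade} says that \(E\) has full \(\PP(d\omega)\nu^\omega(dg)\)-measure. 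By the disintegration/Fubini theorem for the measurable kernel \(\omega\mapsto\nu^\omega\), the map \(\omega\mapsto\nu^\omega(E_\omega)\) is measurable and equals \(1\) for \(\PP\)-a.e. \(\omega\). Choose a Borel full-measure set \(\Omega_0\) on which \(\nu^\omega(E_\omega)=1\), shrinking it to a Borel subset if necessary.

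\emph{Step 2 (closedness of sections).} Fix \(\omega\in\Omega_0\). The set \(E_\omega=\{g: g_*\eta^\omega=\eta^{\Theta\omega}\}\) is closed in \(G\). Indeed, suppose \(g_k\to g\) in \(G\), so that \(\mathfrak a(g_k)\to\mathfrak a(g)\) and \(\mathcal A(g_k)\to\mathcal A(g)\) uniformly on \(\TT^d\). Then \(g_k\cdot x\to g\cdot x\) uniformly in \(x\in Q\), since \(Q\) is compact and the projective action of \(\SL_r(\RR)\) is continuous, uniformly on compact sets of matrices. Hence for each \(\varphi\in C(Q)\),
\[
\int_Q\varphi(g_k\cdot x)\,d\eta^\omega(x)\to\int_Q\varphi(g\cdot x)\,d\eta^\omega(x)
\]
by dominated (even uniform) convergence, and therefore \(g_*\eta^\omega=\lim_k (g_k)_*\eta^\omega=\eta^{\Theta\omega}\). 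Pointwise convergence of \(g_k\cdot x\) already suffices for the weak limit.

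\emph{Step 3 (support inclusion).} \(G\) is Polish, hence second countable, so the support of \(\nu^\omega\) is the smallest closed set of full measure. Since \(E_\omega\) is closed and \(\nu^\omega(E_\omega)=1\), we get \(\supp\nu^\omega\subset E_\omega\). This is \eqref{eq:support-level-exact-upgraded} for every \(\omega\in\Omega_0\).

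The only delicate point is the measurability bookkeeping in Step 1: making \(E\) jointly measurable and producing a \emph{Borel} \(\Omega_0\). Both are handled by the countable convergence-determining family and by the paper's convention that no measurability of \(\omega\mapsto\supp\nu^\omega\) is needed, because the closedness argument is carried out pointwise in \(\omega\).
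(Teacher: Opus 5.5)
Your proposal is correct and follows essentially the same route as the paper: Fubini gives, for a.e.\ $\omega$, a $\nu^\omega$-full set of good $g$; this set is closed because $g\mapsto g_*\eta^\omega$ is weakly continuous; and the support lies in any closed full-measure set. The only cosmetic difference is where the countable convergence-determining family enters. You intersect over it before taking sections and prove closedness of $E_\omega$ directly, whereas the paper takes the closed zero set of each continuous function $D_j(\omega,\cdot)$ separately and invokes the convergence-determining property at the end.
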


\begin{proof}
Let \(\{\varphi_j\}_{j\ge1}\subset C(Q)\) be countable and
convergence-determining.  Define
\[
        D_j(\omega,g):=
        \int_Q\varphi_j(gx)\,d\eta^\omega(x)
        -
        \int_Q\varphi_j\,d\eta^{\Theta\omega}.
\]
By Fubini, after intersecting countably many Borel full-measure sets in
\(\omega\), each \(D_j(\omega,\cdot)\) vanishes \(\nu^\omega\)-a.e.  For fixed such
\(\omega\), the map \(g\mapsto D_j(\omega,g)\) is continuous because the action on
the compact space \(Q\) is continuous.  Its zero set is therefore closed and
contains \(\supp\nu^\omega\).  Hence the integrals of all \(\varphi_j\) against
\(g_*\eta^\omega\) and \(\eta^{\Theta\omega}\) agree for every support element
\(g\), and the convergence-determining property gives
\eqref{eq:support-level-exact-upgraded}.
\end{proof}

\begin{theorem}[Zero-exponent exact equivariance]
\label{thm:random-cdk-exactness}
Assume that the mixed base system
\((\Omega\times G^{\NN}\times\TT^d,\mathfrak F,\rho)\)
is ergodic and that \eqref{eq:log-integrability-G} holds.  If \(L_1(\rho)=0\),
then there exist a stationary projective family
\(\eta^\bullet=\{\eta^\omega\}_{\omega\in\Omega}\in\mathscr S_Q\)
and a Borel set \(\Omega_0\subset\Omega\), \(\PP(\Omega_0)=1\), such that, for
every \(\omega\in\Omega_0\) and every \(g\in\supp\nu^\omega\),
\(
        g_*\eta^\omega=\eta^{\Theta\omega}.
\)
Thus \(\eta^\bullet\) is exact equivariant in the sense of
Definition~\ref{def:exactly-equivariant-family}.
\end{theorem}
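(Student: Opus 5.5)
The plan is to run the smoothing machinery of this section and then let the smoothing vanish. For each \(n\), Proposition~\ref{cor:smoothed-stationary-density-bounds} supplies a stationary projective family \(\eta_n^\bullet\) for \(\nu_n^\omega=\lambda_n\star\nu^\omega\). This family satisfies the two-sided density bounds \eqref{eq:smoothed-density-two-sided}, because the smoothed mixed base has the same translation marginal and is therefore ergodic. By Proposition~\ref{prop:smoothed-entropy-identity}, the entropy defect \(\mathcal D_n\) is finite, nonnegative, and equal to \(r\alpha_n(\eta_n^\bullet)\). By compactness of \(\mathscr Y_{\PP}(Q)\) in the stable topology, we pass to a subsequence along which \(\eta_n^\bullet\to\eta^\bullet\) stably. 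Proposition~\ref{prop:stable-limit-smoothed-families} then gives \(\eta^\bullet\in\mathscr S_Q\) and \(\alpha_n(\eta_n^\bullet)\to\alpha(\eta^\bullet)\).

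The key step is to show that \(\alpha(\eta^\bullet)=0\), which forces \(\mathcal D_n\to0\). By the environment-wise Furstenberg formula (Theorem~\ref{thm:environment-wise-furstenberg-formula}) and the hypothesis \(L_1(\rho)=0\), we have \(\beta=0\), so \(\alpha(\eta^\bullet)\le 0\). On the other hand, \(\alpha_n(\eta_n^\bullet)\ge0\) for every \(n\) by the entropy identity, and passing to the limit gives \(\alpha(\eta^\bullet)\ge0\). Hence \(\alpha(\eta^\bullet)=0\), and therefore \(\mathcal D_n=r\alpha_n(\eta_n^\bullet)\to0\) along the subsequence.

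To conclude, Proposition~\ref{prop:vanishing-entropy-defect-exact-transport} gives \(g_*\eta^\omega=\eta^{\Theta\omega}\) for \(\PP(d\omega)\nu^\omega(dg)\)-a.e. \((\omega,g)\). Lemma~\ref{lem:support-level-upgrade} upgrades this to every \(g\in\supp\nu^\omega\), for \(\omega\) in a Borel set \(\Omega_0\) of full measure. That is the asserted exact equivariance.

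The one point needing care is the limit of the functionals. Stable compactness alone does not control \(\alpha_n\), because the observable is unbounded. This is exactly why I would rely on the uniform domination \eqref{eq:uniform-log-domination-smoothed} used in Proposition~\ref{prop:stable-limit-smoothed-families}: it supplies the convergence \(\alpha_n(\eta_n^\bullet)\to\alpha(\eta^\bullet)\). The squeeze between \(0\le\alpha_n\) and \(\alpha\le\beta=0\) is the crux, and everything else is bookkeeping of subsequences and null sets.
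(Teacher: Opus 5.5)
Your proposal is correct and follows the paper's own proof step for step. The steps are: smoothing by $\lambda_n$; the density bounds and the entropy identity, which give $\mathcal D_n=r\alpha_n(\eta_n^\bullet)\ge0$; a stable limit along a subsequence with $\eta^\bullet\in\mathscr S_Q$ and $\alpha_n(\eta_n^\bullet)\to\alpha(\eta^\bullet)$; the squeeze $0\le\alpha(\eta^\bullet)\le\beta=L_1(\rho)=0$, which forces $\mathcal D_n\to0$; and finally Proposition~\ref{prop:vanishing-entropy-defect-exact-transport} followed by Lemma~\ref{lem:support-level-upgrade}.
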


\begin{proof}
Choose \(\lambda_n\) and \(\nu_n^\omega=\lambda_n\star\nu^\omega\) as above, and
let \(\eta_n^\bullet\) be any stationary projective family for \(\nu_n^\omega\).
By Proposition~\ref{cor:smoothed-stationary-density-bounds}, the density bounds
\eqref{eq:smoothed-density-two-sided} hold; hence
Proposition~\ref{prop:smoothed-entropy-identity} gives
\(
        \alpha_n(\eta_n^\bullet)\ge0,
        \mathcal D_n=r\alpha_n(\eta_n^\bullet).
\)
Compactness of \(\mathscr Y_\PP(Q)\) gives a stably convergent subsequence,
still denoted \(\eta_n^\bullet\), with limit \(\eta^\bullet\).  By
Proposition~\ref{prop:stable-limit-smoothed-families},
\(     \eta^\bullet\in\mathscr S_Q, 
\alpha_n(\eta_n^\bullet)\to\alpha(\eta^\bullet).
\)
The Furstenberg formula, Theorem~\ref{thm:environment-wise-furstenberg-formula},
and \(L_1(\rho)=0\) imply
\(
        \max_{\zeta^\bullet\in\mathscr S_Q}\alpha(\zeta^\bullet)=0.
\)
Thus \(\alpha(\eta^\bullet)\le0\), while the limit of the nonnegative numbers
\(\alpha_n(\eta_n^\bullet)\) is nonnegative.  Hence
\(
        \alpha(\eta^\bullet)=0,   \alpha_n(\eta_n^\bullet)\to0, 
        \mathcal D_n\to0.
\)
Proposition~\ref{prop:vanishing-entropy-defect-exact-transport} gives
\(
        g_*\eta^\omega=\eta^{\Theta\omega}
\)
for \(\PP(d\omega)\nu^\omega(dg)\)-a.e. \((\omega,g)\).  Lemma~\ref{lem:support-level-upgrade}
upgrades this to every \(g\in\supp\nu^\omega\) on a Borel full-measure set.
\end{proof}

\begin{remark}[Exact equivariance does not imply fixed-vector-fiber subgroup invariance]
\label{rem:support-exactness-no-fiberwise-subgroup-action}
Exact equivariance transports projective measures between successive environment
states.  It gives iterated identities
\[
        (g_{n-1}\cdots g_0)_*\eta^\omega
        =
        \eta^{\Theta^n\omega},
        \qquad
        g_j\in\supp\nu^{\Theta^j\omega},
\]
but not invariance under a subgroup acting on the vector fiber over one fixed
environment--phase base point.  Such a fixed-vector-fiber statement requires a return
or comparison mechanism.  This is the role of the environment-matched
equal-total monodromy groups in Section~\ref{sec:return-monodromy}.
\end{remark}

\subsection{Projective support data and finite support-invariant subbundle families}

Exact equivariance is a statement about projective measures.  To extract a
finite geometric obstruction, we record at each environment--phase base point
the smallest dimension in which positive projective mass occurs, the maximal
mass in that dimension, and the finite family of subspaces attaining it.

For \(k=1,\ldots,r\), let \(\mathrm{Gr}_k(\RR^r)\) denote the
Grassmannian of \(k\)-dimensional linear subspaces of \(\RR^r\), and write
\(\mathscr G_k:=\mathrm{Gr}_k(\RR^r), \ \mathscr G:=\bigsqcup_{k=1}^r\mathscr G_k\).
Each \(W\in\mathscr G_k\) determines \(\mathbb P(W)\subset\mathbb P(\RR^r)\), with
\(\mathbb P(\{0\})=\varnothing\).

\begin{lemma}[Upper semicontinuity of subspace mass]
\label{lem:usc-projective-subspace-mass}
For each \(k\), the map
\(\Phi_k(\mu,W):=\mu(\mathbb P(W)), \  \Prob(\mathbb P(\RR^r))\times\mathscr G_k\to[0,1]\),
is upper semicontinuous, hence Borel.
\end{lemma}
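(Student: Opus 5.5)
The plan is to prove upper semicontinuity directly through sequences, using that \(\Prob(\mathbb P(\RR^r))\times\mathscr G_k\) is metrizable: the weak-\(*\) topology on probability measures over a compact metric space is metrizable, and \(\mathscr G_k\) is a compact manifold. Fix a sequence \((\mu_j,W_j)\to(\mu,W)\). We must show
\[
        \limsup_{j\to\infty}\mu_j(\mathbb P(W_j))\le\mu(\mathbb P(W)).
\]
The key geometric input is that the closed sets \(\mathbb P(W_j)\) converge to \(\mathbb P(W)\) in the Hausdorff sense on \(\mathbb P(\RR^r)\). Concretely, for every \(\varepsilon>0\) there is \(j_0\) with \(\mathbb P(W_j)\subset N_\varepsilon:=\{\hat p:\ \mathrm{dist}(\hat p,\mathbb P(W))\le\varepsilon\}\) for \(j\ge j_0\). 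To see this, represent \(W_j\) by orthogonal projections \(\pi_{W_j}\), which converge to \(\pi_W\) in operator norm. Any unit \(p\in W_j\) then satisfies
\[
        \|p-\pi_W p\|=\|(\pi_{W_j}-\pi_W)p\|\le\|\pi_{W_j}-\pi_W\|,
\]
so \(\hat p\) lies within distance \(O(\|\pi_{W_j}-\pi_W\|)\) of \(\mathbb P(W)\) in the standard projective metric.

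With this containment, for \(j\ge j_0\) we get \(\mu_j(\mathbb P(W_j))\le\mu_j(N_\varepsilon)\). Since \(N_\varepsilon\) is closed, the portmanteau theorem gives
\[
        \limsup_{j\to\infty}\mu_j(N_\varepsilon)\le\mu(N_\varepsilon).
\]
The sets \(N_\varepsilon\) decrease to \(\mathbb P(W)\) as \(\varepsilon\downarrow0\), because \(\mathbb P(W)\) is closed. Continuity from above of \(\mu\) therefore yields \(\mu(N_\varepsilon)\downarrow\mu(\mathbb P(W))\), which proves the inequality. Borel measurability then follows, since an upper semicontinuous function has closed superlevel sets \(\{\Phi_k\ge t\}\).

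The only step needing care is the uniform (Hausdorff-type) containment of \(\mathbb P(W_j)\) in a shrinking neighborhood of \(\mathbb P(W)\). The projection-operator description of the Grassmannian topology handles it directly, so I expect no real obstacle. The case \(k=r\) is trivial, because \(\Phi_r(\mu,\RR^r)=1\) is constant.
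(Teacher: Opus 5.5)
Your proof is correct and follows essentially the same route as the paper: Hausdorff convergence of \(\mathbb P(W_j)\) to \(\mathbb P(W)\), containment in a closed neighborhood for large \(j\), the portmanteau inequality for closed sets, and then shrinking the neighborhood. The differences are minor. You shrink closed \(\varepsilon\)-neighborhoods and use continuity from above, where the paper uses outer regularity to pick an open \(U\) and an intermediate closed \(C\). You also justify the Hausdorff containment via orthogonal projections, which the paper only asserts.
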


\begin{proof}
If \(\mu_n\to\mu\) weakly and \(W_n\to W\) in \(\mathscr G_k\), then
\(\mathbb P(W_n)\to\mathbb P(W)\) in the Hausdorff topology.  Given
\(\varepsilon>0\), choose an open set \(U\supset\mathbb P(W)\) with
\(\mu(U)\le\mu(\mathbb P(W))+\varepsilon\), and then a closed set
\(C\) with \(\mathbb P(W)\subset\operatorname{int} C\subset C\subset U\).  For
large \(n\), \(\mathbb P(W_n)\subset C\).  The Portmanteau theorem gives
\[
        \limsup_n\mu_n(\mathbb P(W_n))
        \le
        \limsup_n\mu_n(C)
        \le
        \mu(C)
        \le
        \mu(\mathbb P(W))+\varepsilon .
\]
Letting \(\varepsilon\downarrow0\) proves the claim.
\end{proof}

For \(\mu\in\Prob(\mathbb P(\RR^r))\), compactness of \(\mathscr G_k\) and the
preceding lemma imply that
\[
        s_k(\mu):=
        \max_{W\in\mathscr G_k}\mu(\mathbb P(W))
\]
is attained.

\begin{definition}[Positive-mass projective support data]
For \(\mu\in\Prob(\mathbb P(\RR^r))\), define
\[
        e(\mu):=
        \min\{k\in\{1,\ldots,r\}:s_k(\mu)>0\},
        \qquad
        \varrho(\mu):=s_{e(\mu)}(\mu),
\]
and
\[
        \mathfrak L(\mu):=
        \{W\in\mathscr G_{e(\mu)}:
        \mu(\mathbb P(W))=\varrho(\mu)\}.
\]
If a projective family \(\eta^\bullet\) has Haar torus marginal and
\(\eta^\omega(d\theta,d\hat p) =dm(\theta)\eta_\theta^\omega(d\hat p)\)
is a jointly measurable Rokhlin disintegration, set
\[
        e(\omega,\theta):=e(\eta_\theta^\omega),
        \qquad
        \varrho(\omega,\theta):=\varrho(\eta_\theta^\omega),
        \qquad
        \mathfrak L(\omega,\theta):=\mathfrak L(\eta_\theta^\omega)
\]
for \((\PP\otimes m)\)-a.e. \((\omega,\theta)\).
\end{definition}

\begin{lemma}[Measurability and finiteness of support data]
\label{lem:support-data-measurability-finiteness}
The maps \(\mu\mapsto s_k(\mu)\), \(\mu\mapsto e(\mu)\), and
\(\mu\mapsto\varrho(\mu)\) are Borel.  For every \(\mu\), the set
\(\mathfrak L(\mu)\) is nonempty and finite.  More precisely, if \(e(\mu)<r\),
then
\[
        \#\mathfrak L(\mu)
        \le
        \left\lfloor\frac1{\varrho(\mu)}\right\rfloor,
\]
whereas if \(e(\mu)=r\), then \(\mathfrak L(\mu)=\{\RR^r\}\).  The graph of
\(\mu\mapsto\mathfrak L(\mu)\) is Borel in
\(\Prob(\mathbb P(\RR^r))\times\mathscr G\).
\end{lemma}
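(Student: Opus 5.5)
The proof splits into three parts: Borel measurability of the scalar maps, nonemptiness and the counting bound for \(\mathfrak L(\mu)\), and Borel measurability of the graph.

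\emph{Measurability of \(s_k\), \(e\), \(\varrho\).} By Lemma~\ref{lem:usc-projective-subspace-mass}, \(\Phi_k\) is upper semicontinuous on \(\Prob(\mathbb P(\RR^r))\times\mathscr G_k\), and \(\mathscr G_k\) is compact. A supremum over a compact parameter of a jointly upper semicontinuous function is upper semicontinuous. To see this, take \(\mu_n\to\mu\) and maximizers \(W_n\), and pass to a convergent subsequence \(W_n\to W\). Then \(\limsup_n s_k(\mu_n)\le\Phi_k(\mu,W)\le s_k(\mu)\). Hence each \(s_k\) is upper semicontinuous, and in particular Borel. The map \(e\) is then Borel, because \(\{e=k\}=\{s_1=\dots=s_{k-1}=0,\ s_k>0\}\). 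Also \(\varrho=\sum_k \mathbf 1_{\{e=k\}}s_k\), so \(\varrho\) is Borel. Note that \(s_r(\mu)=1\), so \(e(\mu)\) is always well defined.

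\emph{Nonemptiness and finiteness of \(\mathfrak L(\mu)\).} Nonemptiness holds because \(s_{e(\mu)}\) is attained. If \(e(\mu)=r\), the only \(r\)-dimensional subspace is \(\RR^r\), so \(\mathfrak L(\mu)=\{\RR^r\}\). Now suppose \(e:=e(\mu)<r\), and let \(W_1,\dots,W_N\in\mathfrak L(\mu)\) be distinct. For \(i\ne j\), the intersection \(W_i\cap W_j\) has dimension less than \(e\). By minimality of \(e\), we get \(s_{\dim(W_i\cap W_j)}(\mu)=0\) when that dimension is at least \(1\). When \(W_i\cap W_j=\{0\}\), the convention gives \(\mathbb P(W_i\cap W_j)=\varnothing\). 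In both cases \(\mu(\mathbb P(W_i)\cap\mathbb P(W_j))=\mu(\mathbb P(W_i\cap W_j))=0\). The sets \(\mathbb P(W_i)\) are therefore pairwise \(\mu\)-almost disjoint, which gives \(N\varrho(\mu)\le\sum_i\mu(\mathbb P(W_i))\le1\). Hence \(N\le\lfloor 1/\varrho(\mu)\rfloor\), using \(\varrho>0\).

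\emph{Borel graph.} The graph is the disjoint union over \(k\) of the sets
\[
\{(\mu,W)\in\Prob(\mathbb P(\RR^r))\times\mathscr G_k:\ e(\mu)=k,\ \Phi_k(\mu,W)=s_k(\mu)\}.
\]
Each of these is Borel, since \(e\), \(s_k\) and \(\Phi_k\) are Borel. Indeed, \(\Phi_k\) is upper semicontinuous by Lemma~\ref{lem:usc-projective-subspace-mass}, hence Borel on the product.

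The only step needing real care is the upper semicontinuity of \(s_k\), which relies on compactness of \(\mathscr G_k\) together with the joint upper semicontinuity supplied by Lemma~\ref{lem:usc-projective-subspace-mass}. Everything else is elementary.
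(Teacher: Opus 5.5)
Your proposal is correct and follows essentially the same route as the paper: upper semicontinuity of \(s_k\) from Lemma~\ref{lem:usc-projective-subspace-mass} and compactness of \(\mathscr G_k\), the level-set description of \(e\), almost-disjointness of the \(\mathbb P(W_i)\) via minimality of \(e\) to get \(N\varrho(\mu)\le 1\), and the graph written as a finite union of Borel sets. You make explicit two points the paper leaves implicit: the compactness argument for upper semicontinuity of \(s_k\), where you should first pass to a subsequence realizing the \(\limsup\) before extracting convergent maximizers, and the case \(W_i\cap W_j=\{0\}\).
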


\begin{proof}
By Lemma~\ref{lem:usc-projective-subspace-mass},
\(s_k(\mu)=\max_{W\in\mathscr G_k}\Phi_k(\mu,W)\) is upper semicontinuous.  Since
\(s_r(\mu)=1\), the function \(e\) is Borel from the description
\(\{e(\mu)=k\} = \{s_1=0,\ldots,s_{k-1}=0, \ s_k>0\}\),
and \(\varrho(\mu)=\sum_{k=1}^r\mathbf 1_{\{e=k\}}s_k(\mu)\) is Borel.

Let \(e=e(\mu)\) and \(\varrho=\varrho(\mu)>0\).  The maximum is attained, so
\(\mathfrak L(\mu)\ne\varnothing\).  If \(e=r\), the only element is \(\RR^r\).  If
\(e<r\) and \(W_1,\ldots,W_N\in\mathfrak L(\mu)\) are distinct, then
\(\dim(W_i\cap W_j)<e\) for \(i\ne j\), so minimality of \(e\) gives
\(\mu(\mathbb P(W_i\cap W_j))=0\).  Hence the \(\mathbb P(W_i)\) are
pairwise disjoint modulo \(\mu\)-null sets and \(N\varrho\le1\).

The graph is the finite union over \(k\) of
\[
\{(\mu,W)\in\Prob(\mathbb P(\RR^r))\times\mathscr G_k:
        e(\mu)=k,
        \mu(\mathbb P(W))=\varrho(\mu)\},
\]
which is Borel.
\end{proof}

\begin{proposition}[Measurability of projective support data]
\label{cor:support-data-for-disintegration}
For a jointly measurable disintegration
\((\omega,\theta)\mapsto\eta_\theta^\omega\), the maps
\((\omega,\theta)\mapsto e(\omega,\theta)\) and
\((\omega,\theta)\mapsto\varrho(\omega,\theta)\) are measurable, and
\[
        \{(\omega,\theta,W):W\in\mathfrak L(\omega,\theta)\}
        \subset
        \Omega\times\TT^d\times\mathscr G
\]
is measurable.  The sections \(\mathfrak L(\omega,\theta)\) are nonempty and finite
for \((\PP\otimes m)\)-a.e. \((\omega,\theta)\).
\end{proposition}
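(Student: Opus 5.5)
The plan is to derive everything by composing the measurable disintegration with the Borel maps of Lemma~\ref{lem:support-data-measurability-finiteness}. First, I will use the measurable structure on \(\Prob(\mathbb P(\RR^r))\), namely the Borel \(\sigma\)-algebra of the weak topology. This coincides with the \(\sigma\)-algebra generated by the evaluations \(\mu\mapsto\mu(B)\), or equivalently by \(\mu\mapsto\int f\,d\mu\) for \(f\) in a countable dense subset of \(C(\mathbb P(\RR^r))\). The joint measurability of \((\omega,\theta)\mapsto\eta^\omega_\theta\) as a kernel means exactly that each map \((\omega,\theta)\mapsto\int f\,d\eta^\omega_\theta\) is measurable. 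Hence the map
\[
        \Xi:\Omega\times\TT^d\to\Prob(\mathbb P(\RR^r)),
        \qquad
        \Xi(\omega,\theta):=\eta^\omega_\theta,
\]
is measurable into the Borel structure.

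Next, \(e(\omega,\theta)=e\circ\Xi\) and \(\varrho(\omega,\theta)=\varrho\circ\Xi\) are compositions of a measurable map with the Borel maps \(e,\varrho\) from Lemma~\ref{lem:support-data-measurability-finiteness}, so both are measurable. For the set-valued data, I will write
\[
        \{(\omega,\theta,W):W\in\mathfrak L(\omega,\theta)\}
        =(\Xi\times\mathrm{id}_{\mathscr G})^{-1}(\mathrm{Graph}\,\mathfrak L).
\]
Here \(\mathrm{Graph}\,\mathfrak L\subset\Prob(\mathbb P(\RR^r))\times\mathscr G\) is Borel by the same lemma. The map \(\Xi\times\mathrm{id}\) is measurable for the product \(\sigma\)-algebra on \((\Omega\times\TT^d)\times\mathscr G\). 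Since \(\mathscr G\) is a finite disjoint union of compact metric spaces, the Borel \(\sigma\)-algebra of the product \(\Prob(\mathbb P(\RR^r))\times\mathscr G\) equals the product of the Borel \(\sigma\)-algebras. The preimage is therefore measurable.

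Finally, nonemptiness and finiteness of the sections hold at every \((\omega,\theta)\) where \(\eta^\omega_\theta\) is a probability measure. By Lemma~\ref{lem:support-data-measurability-finiteness} these properties hold for every \(\mu\), so they hold wherever the disintegration is defined, which is \((\PP\otimes m)\)-a.e. The definitions of \(e,\varrho,\mathfrak L\) are only asserted a.e. because the kernel is unique up to a null set. Changing versions therefore alters these objects only on a \((\PP\otimes m)\)-null set and does not affect the statement.

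The only point needing care, and the one I expect to be the main obstacle, is the identification of kernel measurability with Borel measurability of \(\Xi\), together with the product Borel structure. Both rest on standard facts for Polish (here compact metric) spaces. I would cite them via the countable convergence-determining family already used in the paper, rather than reproving them.
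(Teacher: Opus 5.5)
Your proposal is correct and is essentially the paper's argument. The paper likewise notes that the disintegration map \((\omega,\theta)\mapsto\eta^\omega_\theta\) is measurable into \(\Prob(\mathbb P(\RR^r))\), composes it with the Borel maps \(e\), \(\varrho\) of Lemma~\ref{lem:support-data-measurability-finiteness}, and obtains the graph as the inverse image of the Borel graph of \(\mu\mapsto\mathfrak L(\mu)\); your additional remarks (the weak-Borel \(\sigma\)-algebra generated by integrals against a countable dense family, the product Borel structure on \(\Prob(\mathbb P(\RR^r))\times\mathscr G\), and nonemptiness and finiteness holding for every \(\mu\)) make explicit what the paper leaves implicit.
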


\begin{proof}
The disintegration map into \(\Prob(\mathbb P(\RR^r))\) is measurable.  Compose
with the Borel maps in Lemma~\ref{lem:support-data-measurability-finiteness}; the
graph assertion is the inverse image of the Borel graph of
\(\mu\mapsto\mathfrak L(\mu)\).
\end{proof}

\begin{lemma}[Propagation of positive-mass support data]
\label{lem:propagation-positive-mass-support-data}
Let \(\eta^\bullet\) be an exact equivariant projective family with Haar torus
marginal, and use the fixed Haar disintegration above.  Then there is a Borel
set \(\Omega_0\subset\Omega\), \(\PP(\Omega_0)=1\), such that, for every
\(\omega\in\Omega_0\) and every \(g\in\supp\nu^\omega\), the identities
\[
        e(\Theta\omega,\theta+\mathfrak a(g))=e(\omega,\theta),
        \qquad
        \varrho(\Theta\omega,\theta+\mathfrak a(g))=\varrho(\omega,\theta),
\]
and
\[
        \mathfrak L(\Theta\omega,\theta+\mathfrak a(g))
        =
        \mathcal A(g)(\theta)\,\mathfrak L(\omega,\theta)
\]
hold for \(m\)-a.e. \(\theta\), with the exceptional null set allowed to depend
on \((\omega,g)\).
\end{lemma}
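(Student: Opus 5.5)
The plan is to start from the projective-fiber form of exact equivariance, \eqref{eq:exact-equivariant-disintegration}, and then check that the support data are natural under linear maps. Take \(\Omega_0\) to be the Borel full-measure set on which \eqref{eq:exact-equivariant-disintegration} holds for every \(g\in\supp\nu^\omega\). Shrink it if necessary so that the Haar disintegration versions are fixed there, and so that the conclusions of Proposition~\ref{cor:support-data-for-disintegration} hold for \(m\)-a.e.\ \(\theta\) at both \(\omega\) and \(\Theta\omega\). This is possible because \(\Theta_*\PP=\PP\), so \(\Omega_0\cap\Theta^{-1}\Omega_0\) still has full measure.

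Fix \(\omega\in\Omega_0\) and \(g\in\supp\nu^\omega\). There is an \(m\)-null set \(N_{\omega,g}\) off which
\[
        \mathcal A(g)(\theta)_*\eta^\omega_\theta=\eta^{\Theta\omega}_{\theta+\mathfrak a(g)} .
\]
Since translation by \(\mathfrak a(g)\) preserves \(m\), the points \(\theta+\mathfrak a(g)\) with \(\theta\notin N_{\omega,g}\) avoid the null set where the support data at \(\Theta\omega\) fail to be defined, after enlarging \(N_{\omega,g}\) by a translated null set. The lemma is then reduced to a pointwise, purely linear-algebraic statement.

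The statement is: for \(B\in\SL_r(\RR)\) and \(\mu\in\Prob(\mathbb P(\RR^r))\),
\[
        e(B_*\mu)=e(\mu),\qquad \varrho(B_*\mu)=\varrho(\mu),\qquad \mathfrak L(B_*\mu)=B\,\mathfrak L(\mu).
\]
To prove it, note that \(B\) induces a homeomorphism \(W\mapsto BW\) of each \(\mathscr G_k\) preserving dimension. Moreover, \(B^{-1}\mathbb P(BW)=\mathbb P(W)\), so
\[
        (B_*\mu)(\mathbb P(BW))=\mu(\mathbb P(W)).
\]
Because \(W\mapsto BW\) is a bijection of \(\mathscr G_k\), taking maxima gives \(s_k(B_*\mu)=s_k(\mu)\) for every \(k\). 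This yields equality of \(e\), and then of \(\varrho=s_e\). The maximizing sets correspond under the same bijection, which gives \(\mathfrak L(B_*\mu)=B\mathfrak L(\mu)\). Applying this with \(B=\mathcal A(g)(\theta)\) and \(\mu=\eta^\omega_\theta\) gives the three identities for \(\theta\notin N_{\omega,g}\).

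The only delicate point is the bookkeeping of null sets. The disintegrations \(\eta^\omega_\theta\) are defined only \((\PP\otimes m)\)-a.e., and \eqref{eq:exact-equivariant-disintegration} has an exceptional set depending on \((\omega,g)\), with uncountably many \(g\). We avoid needing uniformity in \(g\) because the statement explicitly allows the null set to depend on \((\omega,g)\). Only the \(\omega\)-set must be chosen once, which the intersection above achieves. Measurability issues are already handled by Proposition~\ref{cor:support-data-for-disintegration}.
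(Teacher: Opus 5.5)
Your proposal is correct and follows essentially the same route as the paper. Like the paper, you reduce the lemma to the naturality of \(s_k\), \(e\), \(\varrho\), \(\mathfrak L\) under the bijection \(W\mapsto BW\) of each Grassmannian. You then apply this to the disintegrated exact-equivariance identity \(\eta^{\Theta\omega}_{\theta+\mathfrak a(g)}=\mathcal A(g)(\theta)_*\eta^\omega_\theta\), with the null set depending on \((\omega,g)\) and the \(\omega\)-set chosen once.
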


\begin{proof}
If \(B\in\SL_r(\RR)\) and \(\mu'=B_*\mu\), then
\(W\mapsto BW\) is a bijection of each \(\mathscr G_k\) and
\(\mu'(\mathbb P(BW))=\mu(\mathbb P(W))\).  Therefore
\[
        s_k(\mu')=s_k(\mu),
        \quad
        e(\mu')=e(\mu),
        \quad
        \varrho(\mu')=\varrho(\mu),
        \quad
        \mathfrak L(\mu')=B\mathfrak L(\mu).
\]
Exact equivariance gives, on one full-measure set of environments,
\(g_*\eta^\omega=\eta^{\Theta\omega}\) for every support element \(g\).  Disintegrating
over the Haar torus marginals gives
\(
        \eta_{\theta+\mathfrak a(g)}^{\Theta\omega}
        =
        \mathcal A(g)(\theta)_*\eta_\theta^\omega
        \ \text{for }m\text{-a.e. }\theta .
\)
The preceding linear-algebra observation proves the displayed propagation
identities.
\end{proof}

\begin{definition}[Measurable subbundle]
A measurable set
\(\mathcal L\in \cF\otimes\mathcal B(\TT^d)\otimes\mathcal B(\RR^r)\)
is a \(k\)-dimensional measurable subbundle of
\(\Omega\times\TT^d\times\RR^r\) if every vector-fiber section
\(\mathcal L_\theta^\omega :=\{v\in\RR^r:(\omega,\theta,v)\in\mathcal L\}\)
is a \(k\)-dimensional linear subspace of \(\RR^r\).
\end{definition}

\begin{proposition}[Finite support-invariant subbundle family from projective support data]
\label{prop:finite-family-from-propagated-support-data}
Assume that the mixed base system is ergodic.  Let \(\eta^\bullet\) be an exact
equivariant projective family with Haar disintegration
\(\eta^\omega(d\theta,d\hat p)=dm(\theta)\eta_\theta^\omega(d\hat p)\),
and let \(e,\varrho,\mathfrak L\) be the associated support data.  Then there are
constants
\(e_*\in\{1,\ldots,r\}, \  \varrho_*\in(0,1], \ q_*\in\NN\),
such that
\[
        e(\omega,\theta)=e_* ,
        \qquad
        \varrho(\omega,\theta)=\varrho_* ,
        \qquad
        \#\mathfrak L(\omega,\theta)=q_*
\]
for \((\PP\otimes m)\)-a.e. \((\omega,\theta)\).  If \(e_*<r\), then there are
\(q_*\) measurable \(e_*\)-dimensional subbundles
\(\mathcal W_1,\ldots,\mathcal W_{q_*} \subset \Omega\times\TT^d\times\RR^r\)
such that
\(
        \mathfrak L(\omega,\theta)
        =
        \{\mathcal W_{1,\theta}^\omega,
        \ldots,
        \mathcal W_{q_*,\theta}^\omega\}
\)
for \((\PP\otimes m)\)-a.e. \((\omega,\theta)\), and there is a Borel set
\(\Omega_1\subset\Omega\), \(\PP(\Omega_1)=1\), such that, for every
\(\omega\in\Omega_1\) and every \(g\in\supp\nu^\omega\),
\[
        \left\{
        \mathcal A(g)(\theta)\mathcal W_{j,\theta}^\omega:
        1\le j\le q_*
        \right\}
        =
        \left\{
        \mathcal W_{j,\theta+\mathfrak a(g)}^{\Theta\omega}:
        1\le j\le q_*
        \right\}
\]
for \(m\)-a.e. \(\theta\).  Hence the subbundles form a finite support-invariant family, and they are
nonzero and proper.
\end{proposition}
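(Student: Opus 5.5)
The plan has three steps: show the support data are invariant functions for the mixed base, so ergodicity makes them constant; choose measurable labellings of the finite sets $\mathfrak L(\omega,\theta)$; and transfer the propagation identity to those labellings.

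\emph{Step 1: the support data are constant.} Consider the measurable functions $e(\omega,\theta)$, $\varrho(\omega,\theta)$, $q(\omega,\theta):=\#\mathfrak L(\omega,\theta)$. Measurability comes from Proposition~\ref{cor:support-data-for-disintegration}; for $q$, the number of sections of a measurable graph with finite fibres is measurable. Lift each $F\in\{e,\varrho,q\}$ to the mixed base by $\tilde F(\omega,\xi,\theta):=F(\omega,\theta)$. By Lemma~\ref{lem:propagation-positive-mass-support-data}, for $\omega\in\Omega_0$ and every $g\in\supp\nu^\omega$ we have $F(\Theta\omega,\theta+\mathfrak a(g))=F(\omega,\theta)$ for $m$-a.e.\ $\theta$. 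Here $q$ is preserved because $\mathfrak L\mapsto\mathcal A(g)(\theta)\mathfrak L$ is a bijection. Integrating over $\nu^\omega(dg)$, using $\nu^\omega(\supp\nu^\omega)=1$ and Fubini, gives $\tilde F\circ\mathfrak F=\tilde F$ $\rho$-a.e. Mixed-base ergodicity then makes each of $e,\varrho,q$ a.e.\ constant, equal to $e_*,\varrho_*,q_*$. We have $\varrho_*>0$ by the definition of $e$, and $q_*\ge1$ is finite by Lemma~\ref{lem:support-data-measurability-finiteness}. The Fubini step requires joint measurability of $(\omega,g,\theta)\mapsto F(\Theta\omega,\theta+\mathfrak a(g))$, which follows from the measurability of the kernel.

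\emph{Step 2: measurable subbundles.} Assume $e_*<r$. The set $\{(\omega,\theta,W):W\in\mathfrak L(\omega,\theta)\}$ is measurable with fibres of constant cardinality $q_*$ in the compact metrizable space $\mathscr G_{e_*}$. Fix a Borel linear order on $\mathscr G_{e_*}$, for instance via a Borel injection into $[0,1]$. Define $\mathcal W_{j,\theta}^\omega$ to be the $j$-th element of $\mathfrak L(\omega,\theta)$ in this order.

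To see that each $\mathcal W_j$ is measurable, apply the Lusin--Novikov uniformization theorem, which is valid because the sections are finite, hence countable. It writes the graph as a union of $q_*$ measurable graphs of functions. The $j$-th order statistic of finitely many measurable functions is measurable. The set $\{(\omega,\theta,v):v\in\mathcal W_{j,\theta}^\omega\}$ is then measurable, because the incidence relation $\{(W,v):v\in W\}$ is closed in $\mathscr G_{e_*}\times\RR^r$. On the exceptional null set, define each $\mathcal W_j$ arbitrarily, say as a fixed coordinate subspace, so that every section is $e_*$-dimensional.

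\emph{Step 3: support invariance and properness.} The invariance identity is the statement $\mathfrak L(\Theta\omega,\theta+\mathfrak a(g))=\mathcal A(g)(\theta)\mathfrak L(\omega,\theta)$ from Lemma~\ref{lem:propagation-positive-mass-support-data}, rewritten through the enumeration. We only need equality as unordered sets, so the order does not matter. The one subtlety is null sets. The identity $\mathfrak L=\{\mathcal W_j\}$ holds off a $(\PP\otimes m)$-null set $N$, while the propagation lemma is stated for every $g$ in the support with $m$-null exceptions. Choose $\Omega_1\subset\Omega_0\cap\Theta^{-1}\Omega_0$ such that, for $\omega\in\Omega_1$, both slices $N^\omega$ and $N^{\Theta\omega}$ are $m$-null. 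Translation invariance of $m$ shows that $\{\theta:\theta+\mathfrak a(g)\in N^{\Theta\omega}\}$ is $m$-null for every $g$. The identity then holds for every $g\in\supp\nu^\omega$ and $m$-a.e.\ $\theta$.

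The subbundles are nonzero because $e_*\ge1$ and proper because $e_*<r$. I expect Steps 2 and 3 to need the most care, namely the measurable enumeration and the bookkeeping of null sets uniformly over the whole support rather than $\nu^\omega$-a.e. Step 1 is a direct ergodicity argument.
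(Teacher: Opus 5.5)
Your proposal is correct and follows essentially the paper's route. Propagation plus Fubini makes $e,\varrho,\#\mathfrak L$ invariant under $\mathfrak F$, and ergodicity makes them constant. Lusin--Novikov then gives a Borel enumeration of $\mathfrak L$; you take order statistics for a Borel linear order, whereas the paper iterates least-index selectors. Finally, the closed incidence relation and translation invariance of $m$ give measurability of the $\mathcal W_j$ and the null-set bookkeeping for $\Omega_1$.

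One wording point: Lusin--Novikov yields countably many Borel partial functions, not $q_*$ total ones. Your order-statistic definition is still measurable in that setting, because the rank of each defined value is a countable sum of measurable indicators.
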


\begin{proof}
By Lemma~\ref{lem:propagation-positive-mass-support-data}, the functions
\(e\), \(\varrho\), and the finite-valued map \(\mathfrak L\) propagate under every
support element.  The same is true of \(\#\mathfrak L\), because the matrix
\(\mathcal A(g)(\theta)\) is invertible.

We first note that \((\omega,\theta)\mapsto\#\mathfrak L(\omega,\theta)\) is
measurable.  Let \(X=\Omega\times\TT^d\) and \(F(x)=\mathfrak L(x)\), modified on a
null set so that all sections are nonempty and finite.  The graph of \(F\) is
Borel by Proposition~\ref{cor:support-data-for-disintegration}.  For each \(N\ge1\), let \(R_N\subset X\times\mathscr G^N\) be the Borel
relation consisting of \((x,W_1,\ldots,W_N)\) with pairwise distinct
\(W_j\in F(x)\).  Its vertical sections are finite.  The Lusin--Novikov
uniformization theorem \cite[Theorem~18.10, pp.~123--124]{Kechris95} implies in
particular that the projection of \(R_N\), namely
\(\{x:\#F(x)\ge N\}\), is Borel.  Thus \(\#F\) is measurable.

Regard \(e\), \(\varrho\), and \(\#\mathfrak L\) as observables on the mixed base,
independent of the path coordinate.  The propagation identities make them
\(\mathfrak F\)-invariant for \(\rho\)-a.e. \((\omega,\xi,\theta)\).  Ergodicity
gives constants \(e_*\), \(\varrho_*\), and \(q_*\), with \(q_*\ge1\), as claimed.

Assume \(e_*<r\).  Let \(D\subset X\) be the full-measure Borel set where
\(e=e_*\), \(\varrho=\varrho_*\), and \(\#\mathfrak L=q_*\).  On \(D\), the
Borel graph of \(\mathfrak L\) has exactly \(q_*\) points in every vertical
section and lies in the compact metric Grassmannian \(\mathscr G_{e_*}\), hence
in a standard Borel space.  The Lusin--Novikov theorem
\cite[Theorem~18.10, pp.~123--124]{Kechris95} writes this graph as a countable
union of graphs of Borel partial functions.  Choosing at each point the
least-index partial function that is defined gives a Borel total selector on
\(D\).  Remove its graph and repeat this construction \(q_*\) times.  The result
is a Borel enumeration by pairwise distinct maps
\(W_1,\ldots,W_{q_*}:D\to\mathscr G_{e_*}\).  Extend the maps outside \(D\) by
one fixed element of \(\mathscr G_{e_*}\).  Then
\(
        \mathfrak L(\omega,\theta)
        =
        \{W_1(\omega,\theta),\ldots,W_{q_*}(\omega,\theta)\}
\)
for a.e. \((\omega,\theta)\).  Define
\(\mathcal W_j:= \{(\omega,\theta,v):v\in W_j(\omega,\theta)\}\).
The tautological set
\(\{(W,v)\in\mathscr G_{e_*}\times\RR^r:v\in W\}\) is closed, so each
\(\mathcal W_j\) is the Borel inverse image of this set under
\((\omega,\theta,v)\mapsto(W_j(\omega,\theta),v)\); hence it is a measurable
subbundle.  Intersecting the propagation full-measure set
with the environments for which the selector representation holds in almost
every phase and after one \(\Theta\)-iterate gives \(\Omega_1\).  Substituting the
selectors into the propagation identity proves the displayed permutation
relation for \(m\)-a.e. \(\theta\).  Since \(1\le e_*<r\), the subbundles are
nonzero and proper.
\end{proof}

The two degeneracies excluded by the positivity theorem are now stated in the
same support-level language.

\begin{definition}[Finite support-invariant subbundle families and environment-wise strong irreducibility]
\label{def:environment-wise-strong-irreducibility}
A finite family of nonzero proper measurable subbundles
\(\mathcal L_1, \ldots, \mathcal L_N \subset \Omega\times\TT^d\times\RR^r, \ N\ge1\),
is called a \emph{finite support-invariant family of nonzero proper measurable
subbundles} if there is a Borel full-measure set
\(\Omega_0\subset\Omega\) such that, for every \(\omega\in\Omega_0\) and every
\(g\in\supp\nu^\omega\),
\[
        \mathcal A(g)(\theta)
        \left(\bigcup_{j=1}^N\mathcal L_{j,\theta}^\omega\right)
        =
        \bigcup_{j=1}^N
        \mathcal L_{j,\theta+\mathfrak a(g)}^{\Theta\omega}
\]
for \(m\)-a.e. \(\theta\).  The cocycle is \emph{environment-wise strongly
irreducible} if it admits no such family.
\end{definition}

Let \(\operatorname{Sym}_r^+\) denote the cone of real symmetric
positive-definite \(r\times r\) matrices, and set
\(
\mathcal S_r:=\{S\in\operatorname{Sym}_r^+:\det S=1\}.
\)

\begin{definition}[Environment-wise compact reduction]
\label{def:compact-reduction}
The cocycle admits an \emph{environment-wise compact reduction} if there are a
measurable map
\(H:\Omega\times\TT^d\to\mathcal S_r\)
and a Borel full-measure set \(\Omega_0\subset\Omega\) such that, for every
\(\omega\in\Omega_0\), every \(g\in\supp\nu^\omega\), and \(m\)-a.e. \(\theta\),
\begin{equation}\label{eq:compact-reduction-identity}
        \mathcal A(g)(\theta)^T
        H(\Theta\omega,\theta+\mathfrak a(g))
        \mathcal A(g)(\theta)
        =
        H(\omega,\theta).
\end{equation}
Equivalently, the inner products
\(
        \langle u,v\rangle_{\omega,\theta}:=u^TH(\omega,\theta)v
\)
are transported isometrically by every support element.
\end{definition}

\begin{definition}[Environment-wise noncompactness]
The cocycle is \emph{environment-wise noncompact} if it admits no
environment-wise compact reduction.
\end{definition}

\subsection{Compact reductions and the positivity criterion}

The preceding subsection gives a finite support-invariant subbundle family when
the common support dimension is proper.  In the remaining case the projective-fiber measures charge no proper
projective subspace.  A measurable equivariant barycenter then converts exact
projective transport into a compact reduction.

\begin{definition}[Proper projective measures and fields]
\label{def:proper-projective-measure}
A measure \(\mu\in\Prob(\mathbb P(\RR^r))\) is \emph{proper} if
\(\mu(\mathbb P(W))=0\)
for every proper linear subspace \(W\subsetneq\RR^r\).  Let
\(\Prob_{\mathrm{prop}}(\mathbb P(\RR^r)) \subset \Prob(\mathbb P(\RR^r))\)
denote the proper locus.  A projective family with Haar disintegration
\(
        \eta^\omega(d\theta,d\hat p)=dm(\theta)\eta_\theta^\omega(d\hat p)
\)
is a \emph{proper projective measure field} if
\(
        \eta_\theta^\omega\in
        \Prob_{\mathrm{prop}}(\mathbb P(\RR^r))
\)
for \((\PP\otimes m)\)-a.e. \((\omega,\theta)\).
\end{definition}

\begin{remark}[Properness and the support index]
Using the support data above,
\(\mu\in\Prob_{\mathrm{prop}}(\mathbb P(\RR^r)) \ \Longleftrightarrow\ e(\mu)=r\).
Indeed, positive mass on a proper projective subspace is equivalent to
\(s_k(\mu)>0\) for some \(k<r\).  Hence the proper locus is Borel by
Lemma~\ref{lem:support-data-measurability-finiteness}.
\end{remark}

On \(\mathcal S_r\) use the \(\SL_r(\RR)\)-action
\(B\cdot H:=B^{-T}HB^{-1}\).

\begin{lemma}[Tyler barycenter for proper projective measures]
\label{lem:furstenberg-barycenter-proper-measures}
There exists a Borel map
\[
        \mathcal B:
        \Prob_{\mathrm{prop}}(\mathbb P(\RR^r))
        \longrightarrow
        \mathcal S_r
\]
such that, for every \(B\in\SL_r(\RR)\) and every proper \(\mu\),
\[
        \mathcal B(B_*\mu)
        =
        B\cdot\mathcal B(\mu)
        =
        B^{-T}\mathcal B(\mu)B^{-1}.
\]
\end{lemma}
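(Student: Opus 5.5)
I will construct \(\mathcal B\) as Tyler's M-estimator of scatter. For a proper \(\mu\) and \(S\in\mathcal S_r\), set
\[
        \mathcal T_\mu(S)
        :=
        \int_{\mathbb P(\RR^r)}\log\frac{p^TSp}{\|p\|^2}\,d\mu(\hat p)
        \;-\;\frac1r\log\det S ,
\]
which is well defined because the integrand depends only on \(\hat p\). The last term vanishes on \(\mathcal S_r\) but keeps the formula homogeneous. The plan is to define \(\mathcal B(\mu):=M^{-1}\), where \(M\) is the unique minimizer of \(\mathcal T_\mu\) on \(\mathcal S_r\). The inverse is chosen so that the transformation law matches the stated action.

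\textbf{Existence and uniqueness.} Along geodesics \(t\mapsto S^{1/2}e^{tX}S^{1/2}\) with \(\operatorname{tr}X=0\), the map \(t\mapsto\log(p^TS^{1/2}e^{tX}S^{1/2}p)\) is convex. It is strictly convex unless \(S^{1/2}p\) is an eigenvector of \(X\). A proper \(\mu\) cannot concentrate on a finite union of proper eigenspaces of \(X\ne0\), so \(\mathcal T_\mu\) is strictly geodesically convex on \(\mathcal S_r\).

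For coercivity, write \(S=O\,\mathrm{diag}(e^{t_i})O^T\) with \(\sum t_i=0\) and \(t_1\ge\dots\ge t_r\). A Kent–Tyler layer estimate gives
\[
        \mathcal T_\mu(S)\ \ge\ \sum_k (t_k-t_{k+1})\bigl(\text{const}\cdot k/r-\mu(\mathbb P(W_k))\bigr)+O(1),
\]
where \(W_k\) is the span of the last \(r-k\) eigenvectors. This is where I expect the main obstacle. Tyler's classical condition \(\mu(\mathbb P(W))<\dim W/r\) is needed for coercivity. Properness gives only \(\mu(\mathbb P(W))=0\) for proper \(W\), which in fact makes the condition hold trivially. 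The real work is making the coercive bound uniform enough to pass to the limit as \(S\) escapes to infinity. I would first try a compactness argument: if \(S_n\to\infty\) in \(\mathcal S_r\) with \(\mathcal T_\mu(S_n)\) bounded, extract limiting flags of eigenspaces. Then show, using upper semicontinuity of subspace mass (Lemma~\ref{lem:usc-projective-subspace-mass}) and \(\mu(\mathbb P(W))=0\), that the integral of \(\log\) over a neighborhood of the limiting degenerate subspace is negligible, forcing \(\mathcal T_\mu(S_n)\to+\infty\). A continuous, proper, strictly convex function on the Hadamard manifold \(\mathcal S_r\) then has a unique minimizer \(M(\mu)\).

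\textbf{Equivariance.} For \(B\in\SL_r(\RR)\), substituting \(p\mapsto Bp\) gives
\[
        \mathcal T_{B_*\mu}(S)=\mathcal T_\mu(B^TSB)+c(\mu,B),
\]
where \(c(\mu,B)=-\int\log(\|Bp\|^2/\|p\|^2)\,d\mu\) is independent of \(S\) and finite, since \(\|Bp\|/\|p\|\) is bounded above and below. Hence \(M(B_*\mu)=B^{-T}M(\mu)B^{-1}\), so \(M\) itself already transforms by \(B\cdot M=B^{-T}MB^{-1}\). Therefore I will take \(\mathcal B:=M\) directly rather than its inverse, which gives the stated identity \(\mathcal B(B_*\mu)=B^{-T}\mathcal B(\mu)B^{-1}\).

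\textbf{Borel measurability.} \(\mathcal B\) is the unique zero of the Borel-in-\(\mu\), continuous-in-\(S\) first-order condition, namely the fixed-point equation
\[
        S^{-1}=r\int \frac{pp^T}{p^TSp}\,d\mu .
\]
Equivalently, \(\mathcal B(\mu)\) is the argmin over a countable dense subset of \(\mathcal S_r\), approximated via \(\mathcal B(\mu)=\lim_n\) of near-minimizers. A function whose graph is Borel with unique sections is Borel, by Lusin–Souslin (Kechris), on the Borel set \(\Prob_{\mathrm{prop}}\).
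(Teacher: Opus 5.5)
Your proposal is correct. The equivariance step is the paper's own: your \(M(\mu)\) is exactly the paper's \(H(\mu)=\Sigma_0(Q_\mu)^{-1}\), the unique determinant-one minimizer of \(\int\log\frac{p^THp}{p^Tp}\,d\mu\). Both arguments conclude from \(\mathcal T_{B_*\mu}(S)=\mathcal T_\mu(B^TSB)+c(\mu,B)\) together with uniqueness of the minimizer.

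You differ from the paper in how existence, uniqueness and measurability are obtained. The paper outsources all three to D\"umbgen--Pauly--Schweizer. It checks that properness implies their Condition~0, takes the unique minimizer from their Theorem~4.9, and uses their weak-continuity theorem to make \(\mu\mapsto\mathcal B(\mu)\) continuous on the proper locus, hence Borel. You prove existence and uniqueness by hand: \(\mathcal T_\mu\) is strictly geodesically convex on the Hadamard manifold \(\mathcal S_r\), because properness gives zero mass to the finitely many proper subspaces \(S^{-1/2}E_\lambda(X)\) where convexity degenerates, and it is coercive. You get measurability from the Borel-graph theorem. The graph is \(\{(\mu,S):\mathcal T_\mu(S)\le\mathcal T_\mu(S')\ \text{for all } S'\in D\}\) for a countable dense \(D\subset\mathcal S_r\). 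It is relatively closed in the Borel set \(\Prob_{\mathrm{prop}}(\mathbb P(\RR^r))\times\mathcal S_r\), since \((\mu,S)\mapsto\mathcal T_\mu(S)\) is jointly continuous. Your route is self-contained and shows how much stronger properness is than Tyler's condition \(\mu(\mathbb P(W))<\dim W/r\). The paper's route is shorter and yields continuity of \(\mathcal B\), which is more than the lemma requires.

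A few repairs are needed.
\begin{itemize}
\item Discard your first announced choice \(\mathcal B=M^{-1}\). It transforms as \(M^{-1}\mapsto BM^{-1}B^T\), not by the stated action. Your later choice \(\mathcal B=M\) is the right one.
\item In the layer estimate, the coefficient of \(t_k-t_{k+1}\) should be \(\dim W_k/r=(r-k)/r\), not a multiple of \(k/r\).
\item Also in the layer estimate, \(\mu(\mathbb P(W_k))\) must be replaced by the mass of a neighbourhood. Set \(N_\varepsilon(W_k):=\{\hat p:\|P_kp\|^2<\varepsilon\|p\|^2\}\), where \(P_k\) is the orthogonal projection onto the top \(k\) eigenvectors of \(S\). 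Let \(k(p)\) be the least \(k\) with \(\|P_kp\|^2\ge\varepsilon\|p\|^2\), and bound \(p^TSp\ge e^{t_{k(p)}}\|P_{k(p)}p\|^2\). This gives \(\mathcal T_\mu(S)\ge\sum_{k<r}(t_k-t_{k+1})\bigl((r-k)/r-\mu(N_\varepsilon(W_k))\bigr)+\log\varepsilon\).
\item To finish coercivity, use \(\mu(\mathbb P(W))=0\) for every proper \(W\), continuity from above, and compactness of the Grassmannians. These give \(\sup_W\mu(N_\varepsilon(W))\le\delta<1/r\) for small \(\varepsilon\), hence \(\mathcal T_\mu(S)\ge(1/r-\delta)(t_1-t_r)+\log\varepsilon\). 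What is negligible is the mass of the neighbourhood, not the integral of \(\log\) over it.
\item The phrase ``argmin over a countable dense subset'' should be replaced by the graph description above, since the minimizer need not lie in \(D\).
\end{itemize}
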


\begin{proof}
For \(r=1\), take \(\mathcal B\equiv1\).  Assume \(r\ge2\).  Let
\[
        \iota(\hat p)=\frac{pp^T}{p^Tp},
        \qquad p\in\hat p,
\]
and set \(Q_\mu=\iota_*\mu\).  Since every matrix in the support of
\(Q_\mu\) has rank one and trace one, \(Q_\mu(\{0\})=0\).  Properness gives
Condition~0 of \cite[Section~4.2, equation~(4.5)]{DuembgenPaulySchweizer2015}:
for every nonzero proper \(W\),
\[
        Q_\mu\{M:\operatorname{Ran}M\subset W\}
        =
        \mu(\mathbb P(W))
        =0<\frac{\dim W}{r}.
\]
By \cite[Theorem~4.9]{DuembgenPaulySchweizer2015}, the normalized scatter
criterion has a unique determinant-one minimizer \(\Sigma_0(Q_\mu)\).  In Case~0
of that reference, the objective on determinant-one matrices is, up to the
positive factor \(r\),
\(H\mapsto\Phi_\mu(H)\) after the substitution
\(H=\Sigma^{-1}\).  Thus
\(
        H(\mu):=\Sigma_0(Q_\mu)^{-1}
\)
is the unique element of \(\mathcal S_r\) minimizing
\[
        \Phi_\mu(H):=
        \int_{\mathbb P(\RR^r)}
        \log\frac{p^THp}{p^Tp}\,d\mu(\hat p).
\]
The map \(\mu\mapsto Q_\mu\) is weakly continuous because \(\iota\) is
continuous.  The weak continuity theorem for the normalized scatter functional
\cite[Theorem~6.1 and Remark~6.2]{DuembgenPaulySchweizer2015}, followed by
continuity of matrix inversion on \(\mathcal S_r\), therefore makes
\(\mathcal B(\mu):=H(\mu)\) continuous on the proper locus, and in particular
Borel measurable.

For equivariance, compute for \(B\in\SL_r(\RR)\):
\[
\begin{aligned}
\Phi_{B_*\mu}(B^{-T}HB^{-1})
&=
\int
\log\frac{(Bp)^T(B^{-T}HB^{-1})(Bp)}{(Bp)^T(Bp)}
\,d\mu(\hat p) \\
&=
\Phi_\mu(H)+
\int
\log\frac{p^Tp}{(Bp)^T(Bp)}\,d\mu(\hat p).
\end{aligned}
\]
The second term is finite and independent of \(H\).  Thus \(H\) minimizes
\(\Phi_\mu\) if and only if \(B^{-T}HB^{-1}\) minimizes \(\Phi_{B_*\mu}\).  The
uniqueness of the minimizer proves the stated equivariance.
\end{proof}

\begin{proposition}[Proper projective measure fields yield an environment-wise compact reduction]
\label{prop:proper-projective-field-compact-reduction}
If there exists an exact equivariant proper projective measure field
\(\eta^\bullet\), then the cocycle admits an environment-wise compact reduction.
\end{proposition}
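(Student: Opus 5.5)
The plan is to apply the Tyler barycenter of Lemma~\ref{lem:furstenberg-barycenter-proper-measures} fiberwise to the Haar disintegration of \(\eta^\bullet\). Since \(\eta^\bullet\) is a proper projective measure field, there is a \((\PP\otimes m)\)-full measurable set \(D\subset\Omega\times\TT^d\) on which \(\eta^\omega_\theta\in\Prob_{\mathrm{prop}}(\mathbb P(\RR^r))\). This set is measurable by the preceding remark together with Proposition~\ref{cor:support-data-for-disintegration}. Define
\[
H(\omega,\theta):=\mathcal B(\eta^\omega_\theta)\quad\text{on }D,\qquad H:=I\text{ off }D .
\]
Then \(H\) is measurable into \(\mathcal S_r\), being the composition of the jointly measurable disintegration with the Borel map \(\mathcal B\).

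The next step is to use the projective-fiber form \eqref{eq:exact-equivariant-disintegration} of exact equivariance. It provides a full-measure Borel set \(\Omega_0\) such that, for \(\omega\in\Omega_0\), \(g\in\supp\nu^\omega\), and \(m\)-a.e. \(\theta\),
\[
\mathcal A(g)(\theta)_*\eta^\omega_\theta=\eta^{\Theta\omega}_{\theta+\mathfrak a(g)} .
\]
This form presupposes Haar torus marginals, which are part of the definition of a proper projective measure field.

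We also need both \((\omega,\theta)\in D\) and \((\Theta\omega,\theta+\mathfrak a(g))\in D\) for a.e. \(\theta\). To get this, shrink \(\Omega_0\) to those \(\omega\) for which the sections \(D^\omega\) and \(D^{\Theta\omega}\) are \(m\)-conull. This is a full-measure Borel set by Fubini and \(\Theta\)-invariance of \(\PP\). Translation invariance of \(m\) then makes \(D^{\Theta\omega}-\mathfrak a(g)\) conull for every \(g\). In fact, since pushforwards of proper measures by invertible matrices are proper, the second membership already follows from the first together with the transport identity.

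On the resulting conull set of \(\theta\), set \(B=\mathcal A(g)(\theta)\). The equivariance of \(\mathcal B\) gives
\[
H(\Theta\omega,\theta+\mathfrak a(g))=\mathcal B(B_*\eta^\omega_\theta)=B^{-T}H(\omega,\theta)B^{-1}.
\]
Rearranging, this is \(B^TH(\Theta\omega,\theta+\mathfrak a(g))B=H(\omega,\theta)\), which is exactly \eqref{eq:compact-reduction-identity}.

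I expect the only delicate step to be the bookkeeping of null sets. The exceptional \(\theta\)-set may depend on \((\omega,g)\), and there are uncountably many \(g\). However, Definition~\ref{def:compact-reduction} only requires the identity for \(m\)-a.e. \(\theta\) for each fixed pair \((\omega,g)\), so no uniformity in \(g\) is needed. The full-measure set \(\Omega_0\) is chosen once, independently of \(g\), as described above. This suffices to prove the proposition.
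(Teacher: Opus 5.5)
Your proposal is correct and follows essentially the same route as the paper. You define \(H\) as the Tyler barycenter of the Haar-disintegrated fibers on the proper locus and as \(I\) off it, and you choose one full-measure environment set where the disintegrated exact equivariance holds and both relevant fiber points are proper. The equivariance of \(\mathcal B\) then yields the compact-reduction identity. Your null-set bookkeeping (Fubini, \(\Theta\)-invariance, translation invariance of \(m\)) spells out what the paper's proof states in one line.
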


\begin{proof}
Write
\(\eta^\omega(d\theta,d\hat p)=dm(\theta)\eta_\theta^\omega(d\hat p)\).
Let \(X_{\mathrm{prop}}\) be the full \((\PP\otimes m)\)-measure set on which
\(\eta_\theta^\omega\) is proper, and define
\[
        H(\omega,\theta):=
        \mathcal B(\eta_\theta^\omega)
        \quad\text{on }X_{\mathrm{prop}},
        \qquad
        H(\omega,\theta):=I
        \quad\text{off }X_{\mathrm{prop}}.
\]
This is measurable.  Choose a full-measure environment set on which exact
equivariance holds and both \((\omega,\theta)\) and
\((\Theta\omega,\theta+\mathfrak a(g))\) lie in \(X_{\mathrm{prop}}\) for a.e.
\(\theta\).  For such \(\omega\), every \(g\in\supp\nu^\omega\), and a.e.
\(\theta\), disintegrated exactness gives
\(
        \eta_{\theta+\mathfrak a(g)}^{\Theta\omega}
        =
        \mathcal A(g)(\theta)_*\eta_\theta^\omega .
\)
By the equivariance of \(\mathcal B\),
\[
\begin{aligned}
H(\Theta\omega,\theta+\mathfrak a(g))
&=
\mathcal B(\eta_{\theta+\mathfrak a(g)}^{\Theta\omega}) \\
&=
\mathcal A(g)(\theta)^{-T}
H(\omega,\theta)
\mathcal A(g)(\theta)^{-1}.
\end{aligned}
\]
This is exactly \eqref{eq:compact-reduction-identity}.
\end{proof}

\begin{theorem}[Degeneracy alternative from exact equivariance]
\label{thm:degeneracy-exact}
Assume that the mixed base system is ergodic and that an exact equivariant
projective family on \(Q\) exists.  Then at least one of the following holds:
\begin{enumerate}
\item the cocycle admits an environment-wise compact reduction;
\item the cocycle admits a finite support-invariant family of nonzero proper
measurable subbundles.
\end{enumerate}
\end{theorem}

\begin{proof}
Let \(\eta^\bullet\) be exact equivariant.  Integrating the support-level identity
against \(\nu^\omega\) gives averaged stationarity, and mixed-base ergodicity
gives the Haar torus marginal by Theorem~\ref{prop:Haar-marginal}.  Fix a Haar
disintegration and the support data \(e,\varrho,\mathfrak L\).  By
Proposition~\ref{prop:finite-family-from-propagated-support-data},
\(e(\omega,\theta)=e_*\) a.e. for some \(e_*\in\{1,\ldots,r\}\).  If \(e_*<r\),
the same proposition gives a finite support-invariant family of nonzero proper
measurable subbundles.

If \(e_*=r\), then the characterization
\(
        \mu\in\Prob_{\mathrm{prop}}(\mathbb P(\RR^r))
        \Longleftrightarrow e(\mu)=r
\)
shows that the projective-fiber measures are proper a.e.  Thus \(\eta^\bullet\) is an exact
equivariant proper projective measure field, and
Proposition~\ref{prop:proper-projective-field-compact-reduction} gives a compact
reduction.
\end{proof}

\begin{theorem}[Environment-wise Furstenberg positivity criterion]
\label{thm:env-positivity}
Assume that the mixed base system
\((\Omega\times G^{\NN}\times\TT^d,\mathfrak F,\rho)\)
is ergodic and that \eqref{eq:log-integrability-G} holds.  If the cocycle is
environment-wise noncompact and environment-wise strongly irreducible, then
\(
        L_1(\rho)>0.
\)
\end{theorem}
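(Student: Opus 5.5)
The plan is to argue by contradiction, chaining the results of this section. The exponent cannot be negative, which gives the nonnegativity step: for \(\SL_r(\RR)\) matrices \(\|\mathcal A_n(\xi,\theta)\|\ge1\), so \(\frac1n\log\|\mathcal A_n\|\ge0\) for every \(n\). Kingman's formula under mixed-base ergodicity (Section~\ref{sec:model}) then gives \(L_1(\rho)\ge0\). Equivalently, by Theorem~\ref{thm:environment-wise-furstenberg-formula} and Proposition~\ref{prop:smoothed-entropy-identity}, \(\beta\ge\lim\alpha_n\ge0\). It therefore suffices to rule out \(L_1(\rho)=0\).

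Suppose \(L_1(\rho)=0\). The hypotheses of Theorem~\ref{thm:random-cdk-exactness}, namely mixed-base ergodicity and \eqref{eq:log-integrability-G}, are exactly those assumed here. That theorem produces a stationary projective family \(\eta^\bullet\in\mathscr S_Q\) and a Borel full-measure set \(\Omega_0\) such that \(g_*\eta^\omega=\eta^{\Theta\omega}\) for every \(\omega\in\Omega_0\) and every \(g\in\supp\nu^\omega\). In other words, an exact equivariant projective family exists in the sense of Definition~\ref{def:exactly-equivariant-family}.

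Next apply Theorem~\ref{thm:degeneracy-exact}, whose only other hypothesis is mixed-base ergodicity. It yields either an environment-wise compact reduction (Definition~\ref{def:compact-reduction}) or a finite support-invariant family of nonzero proper measurable subbundles (Definition~\ref{def:environment-wise-strong-irreducibility}). The first alternative contradicts environment-wise noncompactness. For the second, note that Proposition~\ref{prop:finite-family-from-propagated-support-data} gives the permutation relation \(\{\mathcal A(g)(\theta)\mathcal W^\omega_{j,\theta}\}_j=\{\mathcal W^{\Theta\omega}_{j,\theta+\mathfrak a(g)}\}_j\). Taking unions over \(j\) gives exactly the union identity required in Definition~\ref{def:environment-wise-strong-irreducibility}, so this alternative contradicts environment-wise strong irreducibility. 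Hence \(L_1(\rho)\neq0\), and combined with the nonnegativity step, \(L_1(\rho)>0\).

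The main obstacle is not analytic, since all the heavy work is in the cited results. The care needed is bookkeeping. First, the full-measure environment sets and the \(\theta\)-a.e. exceptional sets must match the formats demanded by Definitions~\ref{def:environment-wise-strong-irreducibility} and~\ref{def:compact-reduction}: a Borel full-measure \(\Omega_0\), every support element, and \(m\)-a.e. \(\theta\) with a null set allowed to depend on \((\omega,g)\). Both Proposition~\ref{prop:finite-family-from-propagated-support-data} and Proposition~\ref{prop:proper-projective-field-compact-reduction} deliver their conclusions in exactly that form, so I expect only a short verification. Second, the sign argument \(L_1(\rho)\ge0\) should be stated explicitly so that the dichotomy is only between \(L_1(\rho)=0\) and \(L_1(\rho)>0\).
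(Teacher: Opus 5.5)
Your proposal is correct and matches the paper's proof. Both obtain \(L_1(\rho)\ge0\) from \(\|B\|\ge1\) on \(\SL_r(\RR)\) together with Kingman's formula, and both rule out \(L_1(\rho)=0\) by applying Theorem~\ref{thm:random-cdk-exactness} and then the degeneracy alternative of Theorem~\ref{thm:degeneracy-exact}, which contradicts environment-wise noncompactness or environment-wise strong irreducibility.
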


\begin{proof}
If \(L_1(\rho)=0\), Theorem~\ref{thm:random-cdk-exactness} supplies an exact
equivariant projective family.  Theorem~\ref{thm:degeneracy-exact} then gives
either an environment-wise compact reduction or a finite support-invariant
family of nonzero proper measurable subbundles, contradicting respectively environment-wise
noncompactness and environment-wise strong irreducibility.  Hence
\(L_1(\rho)\ne0\).

Since every \(B\in\SL_r(\RR)\) satisfies \(\|B\|\ge1\), we have
\(\log\|\mathcal A_n(\xi,\theta)\|\ge0\) for every \(n\).  Kingman's formula
therefore gives \(L_1(\rho)\ge0\).  Combining this with \(L_1(\rho)\ne0\) proves
\(L_1(\rho)>0\).
\end{proof}

\section{Environment-Matched Equal-Total Monodromy Groups and Positivity}
\label{sec:return-monodromy}

This section converts the cocycle-level degeneracy alternative of
Section~\ref{sec:exact-equivariance} into tests based on the moving supports.
An admissible support word is tied to an environment segment and determines a
word map between vector fibers over environment--phase base points.  Two word
maps can be compared only when their words belong to the same forward or
backward admissible support-word space and have equal total torus translation.
The first condition matches the environment segment and the sequence of
supports, and the second gives the same total phase displacement and matches the
remaining endpoint phase.  Their relative composition is an
\emph{environment-matched equal-total word quotient}. Geometrically, it is a
fiberwise return matrix on the vector fiber over the common
\((\omega,\theta)\).  These word quotients generate the
\emph{environment-matched equal-total monodromy group}.  Stable noncompactness
and stable strong irreducibility of a suitable equal-total monodromy group
exclude, respectively, an environment-wise compact reduction and a finite
support-invariant family of nonzero proper measurable subbundles.  The
positivity criterion of Theorem~\ref{thm:env-positivity} then yields
\(L_1(\rho)>0\).

\begin{remark}[Pointwise support convention]
\label{rem:pointwise-support-monodromy}
All support-level constructions in this section are pointwise in the
environment.  After discarding a null set, we work on a fixed
\(\Theta\)-invariant full-measure Borel set on which all kernels
\(\nu^{\Theta^j\omega}\), \(j\in\ZZ\), are defined.  For such a fixed
\(\omega\), the admissible support-word spaces, equal-total word-pair spaces, dense
subsets, and environment-matched equal-total monodromy groups introduced below
are constructed from the closed supports
\(\supp\nu^{\Theta^j\omega}\) along this orbit.  We never use
\(\omega\mapsto\supp\nu^\omega\) as a measurable multifunction, nor
\(\omega\mapsto\cM^{\mathrm{eq}}_{\omega,\theta}\) as a random closed subgroup.

The countable dense subsets used below may therefore depend on the fixed
\(\omega\).  Their only purpose is to pass from a.e.-in-phase support identities
to countably many word-pair identities.  After the corresponding regular phase set of full Haar measure is formed,
continuity in the word-pair variables and closedness of the relevant stabilizers
extend the conclusion to all equal-total word pairs.
\end{remark}

\subsection{Environment-matched equal-total word quotients}

Fix a good environment \(\omega\) as in
Remark~\ref{rem:pointwise-support-monodromy}.  Since \(\Theta\) is invertible,
the supports at \(\Theta^j\omega\), \(j\in\ZZ\), are available.  For
\(\ell\ge1\), define the forward and backward admissible support-word spaces
\[
\mathfrak W_{\ell}^{+}(\omega)
:=
\supp\nu^{\omega}\times
\supp\nu^{\Theta\omega}\times\cdots\times
\supp\nu^{\Theta^{\ell-1}\omega},
\]
and
\[
\mathfrak W_{\ell}^{-}(\omega)
:=
\supp\nu^{\Theta^{-\ell}\omega}\times
\supp\nu^{\Theta^{-(\ell-1)}\omega}\times\cdots\times
\supp\nu^{\Theta^{-1}\omega}.
\]
Elements of \(\mathfrak W_\ell^+(\omega)\) and
\(\mathfrak W_\ell^-(\omega)\) are called forward and backward admissible
support words, respectively.
For \(\mathbf g=(g_0,\ldots,g_{\ell-1})\in G^\ell\), set
\[
        \Gamma_j(\mathbf g):=\sum_{i=0}^{j-1}\mathfrak a(g_i),
        \qquad
        \Gamma_0(\mathbf g):=0 .
\]
Thus \(\Gamma_\ell(\mathbf g)\) is the total torus translation of the word,
which is also its total phase displacement.

For \(\mathbf g\in\mathfrak W_\ell^+(\omega)\), define the forward word map
\[
\mathcal A_{\mathbf g}^{+}(\theta)
:=
\mathcal A(g_{\ell-1})(\theta+\Gamma_{\ell-1}(\mathbf g))
\cdots
\mathcal A(g_1)(\theta+\Gamma_1(\mathbf g))
\mathcal A(g_0)(\theta),
\]
which maps the vector fiber over \((\omega,\theta)\) to the vector fiber over
\((\Theta^\ell\omega,\theta+\Gamma_\ell(\mathbf g))\).  For
\(\mathbf g\in\mathfrak W_\ell^-(\omega)\), define the backward word map
\[
\begin{aligned}
\mathcal A_{\mathbf g}^{-}(\theta)
&:=
\mathcal A(g_{\ell-1})
(\theta+\Gamma_{\ell-1}(\mathbf g)-\Gamma_\ell(\mathbf g))
\cdots\\
&\qquad
\mathcal A(g_1)
(\theta+\Gamma_1(\mathbf g)-\Gamma_\ell(\mathbf g))
\mathcal A(g_0)
(\theta-\Gamma_\ell(\mathbf g)),
\end{aligned}
\]
which maps the vector fiber over
\((\Theta^{-\ell}\omega,\theta-\Gamma_\ell(\mathbf g))\) to the vector
fiber over \((\omega,\theta)\).

Equal total torus translation is the phase-matching condition: it identifies
the target phase for forward word maps and the source phase for backward word
maps.  Define
\[
\mathfrak P_{\ell}^{+}(\omega)
:=
\left\{
(\mathbf g,\mathbf h)
\in\mathfrak W_{\ell}^{+}(\omega)\times\mathfrak W_{\ell}^{+}(\omega):
\Gamma_\ell(\mathbf g)=\Gamma_\ell(\mathbf h)
\right\}
\]
and
\[
\mathfrak P_{\ell}^{-}(\omega)
:=
\left\{
(\mathbf g,\mathbf h)
\in\mathfrak W_{\ell}^{-}(\omega)\times\mathfrak W_{\ell}^{-}(\omega):
\Gamma_\ell(\mathbf g)=\Gamma_\ell(\mathbf h)
\right\}.
\]
These are the forward and backward equal-total word-pair spaces.  For such
pairs, define the relative compositions
\[
Q_{\mathbf g,\mathbf h}^{+}(\theta)
:=
\bigl(\mathcal A_{\mathbf g}^{+}(\theta)\bigr)^{-1}
\mathcal A_{\mathbf h}^{+}(\theta),
\qquad
Q_{\mathbf g,\mathbf h}^{-}(\theta)
:=
\mathcal A_{\mathbf g}^{-}(\theta)
\bigl(\mathcal A_{\mathbf h}^{-}(\theta)\bigr)^{-1}.
\]
The matrices \(Q_{\mathbf g,\mathbf h}^{\pm}(\theta)\) are the
environment-matched equal-total word quotients; geometrically, each is a
fiberwise return matrix on the vector fiber over \((\omega,\theta)\).  The \(\ell\)-level
environment-matched equal-total word quotient sets are
\[
\mathcal E_{\ell,\omega,\theta}^{\pm}
:=
\left\{
Q_{\mathbf g,\mathbf h}^{\pm}(\theta):
(\mathbf g,\mathbf h)\in\mathfrak P_\ell^{\pm}(\omega)
\right\}
\subset\SL_r(\RR),
\]
and
\(\mathcal E_{\omega,\theta}^{\pm} :=\bigcup_{\ell\ge1}\mathcal E_{\ell,\omega,\theta}^{\pm}\).

\begin{definition}[Environment-matched equal-total monodromy group]
The \emph{environment-matched equal-total monodromy group} at \((\omega,\theta)\) is
\[
\cM^{\mathrm{eq}}_{\omega,\theta}
:=
\overline{
\left\langle
\mathcal E_{\omega,\theta}^{+}\cup
\mathcal E_{\omega,\theta}^{-}
\right\rangle_{\mathrm{grp}}
}^{\,\SL_r(\RR)}.
\]
Here equal-total means equal total torus translation, equivalently equal
total phase displacement.  It is a closed subgroup of \(\SL_r(\RR)\).  After
this definition, we use \emph{equal-total monodromy group} as a short form when
the environment matching is clear.
\end{definition}

\begin{remark}[Elementary one-step word quotients]
\label{rem:elementary-generators}
If \((\beta,B_1),(\beta,B_2)\in\supp\nu^\omega\), then
\[
        B_1(\theta)^{-1}B_2(\theta)
        \in\mathcal E^+_{1,\omega,\theta}
        \subset\cM^{\mathrm{eq}}_{\omega,\theta}.
\]
If \((\beta,B_1),(\beta,B_2)\in\supp\nu^{\Theta^{-1}\omega}\), then
\[
        B_1(\theta-\beta)B_2(\theta-\beta)^{-1}
        \in\mathcal E^-_{1,\omega,\theta}
        \subset\cM^{\mathrm{eq}}_{\omega,\theta}.
\]
A \emph{monodromy witness} is a concrete word quotient, or a finite set of
word quotients, used to verify a group-level property of the equal-total
monodromy group.  These one-step word quotients serve as monodromy witnesses
in the Schr\"odinger application.
\end{remark}

\subsection{Stable subgroup nondegeneracy in the lower-Chabauty topology}

The monodromy-group criterion uses only the lower half of the Chabauty--Fell
topology: compact pieces already present in a subgroup must persist under
perturbation.  No upper semicontinuity of the whole subgroup is required.

\begin{definition}[Lower-Chabauty topology]
\label{def:lower-chabauty}
Let
\[
\operatorname{Sub}(\SL_r(\RR))
:=
\{H\subset\SL_r(\RR): H \text{ is a closed subgroup}\}.
\]
Fix a compatible metric \(d\) on \(\SL_r(\RR)\), and let
\(d_{\mathrm H}\) be the induced Hausdorff distance on compact subsets.  For
\(H\in\operatorname{Sub}(\SL_r(\RR))\), a compact set \(K\subset H\), and \(\delta>0\),
set
\[
\mathcal U_{K,\delta}(H)
:=
\left\{
H'\in\operatorname{Sub}(\SL_r(\RR)):
\exists K'\subset H' \text{ compact with }
        d_{\mathrm H}(K,K')<\delta
\right\}.
\]
The \emph{lower-Chabauty topology} is the topology for which these sets form a
neighborhood basis of \(H\), as \(K\) and \(\delta\) vary.  It is the lower part
of the usual Chabauty--Fell topology \cite{Chabauty50,Fell62}.
\end{definition}

\begin{definition}[Stable subgroup nondegeneracy]
\label{def:stable-subgroup-nondegeneracy}
Let \(H\in\operatorname{Sub}(\SL_r(\RR))\).
\begin{enumerate}[label=(\roman*)]
    \item \(H\) is \emph{strongly irreducible} if it preserves no finite union of
    nonzero proper linear subspaces of \(\RR^r\).
    \item \(H\) is \emph{stably noncompact} if some lower-Chabauty neighborhood
    of \(H\) consists only of noncompact closed subgroups.
    \item \(H\) is \emph{stably strongly irreducible} if some lower-Chabauty
    neighborhood of \(H\) consists only of strongly irreducible closed subgroups.
\end{enumerate}
We call the properties in items \textup{(ii)} and \textup{(iii)}
\emph{stable noncompactness} and \emph{stable strong irreducibility},
respectively.
\end{definition}

\begin{lemma}[Lower-Chabauty continuity in the phase variable]
\label{lem:phase-lower-chabauty-monodromy}
There exists a \(\Theta\)-invariant full-measure Borel set
\(\Omega_{\mathrm{eq}}\subset\Omega\) such that, for every
\(\omega\in\Omega_{\mathrm{eq}}\), the map
\[
        \TT^d\ni\theta\longmapsto
        \cM^{\mathrm{eq}}_{\omega,\theta}
        \in\operatorname{Sub}(\SL_r(\RR))
\]
is lower-Chabauty continuous.  Equivalently, if
\(K\subset\cM^{\mathrm{eq}}_{\omega,\theta_0}\) is compact and \(\delta>0\),
then for all \(\theta\) sufficiently close to \(\theta_0\) there is a compact
\(K_\theta\subset\cM^{\mathrm{eq}}_{\omega,\theta}\) with
\(d_{\mathrm H}(K,K_\theta)<\delta\).
\end{lemma}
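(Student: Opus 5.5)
The plan is to reduce lower-Chabauty continuity to pointwise continuity in \(\theta\) of finitely many explicit group elements. The key structural observation is that the equal-total word-pair spaces \(\mathfrak P_\ell^{\pm}(\omega)\) depend only on \(\omega\), not on \(\theta\). For \(\Omega_{\mathrm{eq}}\) I take the \(\Theta\)-invariant full-measure Borel set fixed in Remark~\ref{rem:pointwise-support-monodromy}, on which all kernels \(\nu^{\Theta^j\omega}\), \(j\in\ZZ\), and their closed supports are defined. No further measurability is needed, because the argument below is entirely pointwise in \(\omega\).

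First, fix \(\omega\in\Omega_{\mathrm{eq}}\) and a word pair \((\mathbf g,\mathbf h)\in\mathfrak P_\ell^{\pm}(\omega)\). Each factor \(\mathcal A(g_i)\) lies in \(C^0(\TT^d,\SL_r(\RR))\), the translations \(\Gamma_j(\mathbf g)\) are fixed, and inversion and multiplication in \(\SL_r(\RR)\) are continuous. Hence \(\theta\mapsto Q^{\pm}_{\mathbf g,\mathbf h}(\theta)\) is continuous \(\TT^d\to\SL_r(\RR)\). Next, any element of the abstract group \(\langle\mathcal E^+_{\omega,\theta}\cup\mathcal E^-_{\omega,\theta}\rangle_{\mathrm{grp}}\) has the form
\[
        w(\theta)=Q^{\epsilon_1}_{\mathbf g_1,\mathbf h_1}(\theta)^{s_1}\cdots Q^{\epsilon_N}_{\mathbf g_N,\mathbf h_N}(\theta)^{s_N},
        \qquad s_i\in\{\pm1\},
\]
for a finite list of word pairs. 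This formal word makes sense at every \(\theta\) and lies in \(\cM^{\mathrm{eq}}_{\omega,\theta}\) for every \(\theta\). It is also continuous in \(\theta\), since it is a finite product of continuous maps.

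Second, I handle a compact set \(K\subset\cM^{\mathrm{eq}}_{\omega,\theta_0}\) and \(\delta>0\). By compactness, I choose finitely many points \(k_1,\ldots,k_M\in K\) such that the \(d\)-balls of radius \(\delta/3\) around them cover \(K\). Since \(\cM^{\mathrm{eq}}_{\omega,\theta_0}\) is the closure of the generated group, each \(k_i\) has a formal word \(w_i\) with \(d(w_i(\theta_0),k_i)<\delta/3\). By continuity of the finitely many maps \(w_i\), there is a neighborhood \(V\) of \(\theta_0\) such that \(d(w_i(\theta),w_i(\theta_0))<\delta/3\) for all \(i\) and all \(\theta\in V\). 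For such \(\theta\) I set \(K_\theta:=\{w_1(\theta),\ldots,w_M(\theta)\}\), a finite and hence compact subset of \(\cM^{\mathrm{eq}}_{\omega,\theta}\).

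To bound the Hausdorff distance, I check both directions. Every point of \(K\) lies within \(\delta/3\) of some \(k_i\), hence within \(\delta/3+2\delta/3=\delta\) of \(w_i(\theta)\). Conversely, each \(w_i(\theta)\) lies within \(2\delta/3\) of \(k_i\in K\). So \(d_{\mathrm H}(K,K_\theta)\le\delta\), and running the argument with \(\delta/2\) gives strict inequality. This is precisely the stated equivalent form. It shows that every basic neighborhood \(\mathcal U_{K,\delta}(\cM^{\mathrm{eq}}_{\omega,\theta_0})\) contains \(\cM^{\mathrm{eq}}_{\omega,\theta}\) for \(\theta\) near \(\theta_0\), which is lower-Chabauty continuity.

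I expect no serious obstacle here. The only point needing care is the first step: checking that the generating word quotients are indexed by a \(\theta\)-independent set, so that a generator available at \(\theta_0\) is also available at nearby \(\theta\). This is where the equal-total condition enters, because it is a condition on the words alone and does not involve the phase. Everything after that is a routine compactness and continuity argument.
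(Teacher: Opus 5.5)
Your proposal is correct and matches the paper's proof essentially step for step. Both arguments rest on three points: the word-pair spaces $\mathfrak P_\ell^{\pm}(\omega)$ do not depend on $\theta$; finite products of word quotients and their inverses are continuous in $\theta$; and a $\delta/3$-net of $K$, approximated by such products at $\theta_0$ and carried to nearby $\theta$ by continuity, gives the finite set $K_\theta$ with $d_{\mathrm H}(K,K_\theta)<\delta$.
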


\begin{proof}
Take \(\Omega_{\mathrm{eq}}\) to be the invariant full-measure set on which the
pointwise support convention of Remark~\ref{rem:pointwise-support-monodromy} is
valid.  Fix \(\omega\in\Omega_{\mathrm{eq}}\).  The equal-total condition and the
support membership of a word pair are independent of \(\theta\).  For each fixed
\((\mathbf g,\mathbf h)\in\mathfrak P_\ell^\pm(\omega)\), the quotient
\(Q_{\mathbf g,\mathbf h}^{\pm}(\theta)\) depends continuously on \(\theta\), and
so does every finite product of such quotients and their inverses.

Let \(K\subset\cM^{\mathrm{eq}}_{\omega,\theta_0}\) be compact and let
\(\delta>0\).  Choose \(x_1,\ldots,x_N\in K\) forming a \(\delta/3\)-net.  Since
\(\cM^{\mathrm{eq}}_{\omega,\theta_0}\) is the closed subgroup generated by the
environment-matched equal-total word quotients, for each \(i\) choose a finite
product \(P_i(\theta_0)\) of these word quotients and their inverses
with
\(d(P_i(\theta_0),x_i)<\delta/3\).
Using the same finitely many equal-total word pairs,
define \(P_i(\theta)\) for
nearby phases.  By continuity, after shrinking the phase neighborhood of
\(\theta_0\),
\(d(P_i(\theta),P_i(\theta_0))<\delta/3, \  i=1,\ldots,N\).
Then
\(K_\theta:=\{P_1(\theta),\ldots,P_N(\theta)\} \subset\cM^{\mathrm{eq}}_{\omega,\theta}\)
is compact and satisfies \(d_{\mathrm H}(K,K_\theta)<\delta\).  This is lower-Chabauty continuity.
\end{proof}

\subsection{Monodromy-group exclusions and positivity}

We now show that the two cocycle-level degeneracies from
Section~\ref{sec:exact-equivariance} impose incompatible restrictions on
stable equal-total monodromy groups.  An environment-wise compact reduction
places each regular-phase group in a compact inner-product stabilizer, whereas
a finite support-invariant family of nonzero proper measurable subbundles
places it in the stabilizer of a finite union of proper subspaces.

\begin{lemma}[Inner-product stabilizers]
\label{lem:closed-stabilizer-inner-product}
Let \(S\in\mathcal S_r\) and set
\(K_S:=\{M\in\SL_r(\RR): M^T S M=S\}\).
Then \(K_S\) is a closed compact subgroup of \(\SL_r(\RR)\).
\end{lemma}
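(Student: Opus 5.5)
The plan is to reduce to the standard orthogonal group by a symmetric square root. Since \(S\in\mathcal S_r\) is symmetric positive definite with \(\det S=1\), it has a unique symmetric positive-definite square root \(S^{1/2}\), and \(\det S^{1/2}=1\). For \(M\in\SL_r(\RR)\), set \(N:=S^{1/2}MS^{-1/2}\). Then
\[
        M^TSM=S
        \iff
        S^{-1/2}M^TS^{1/2}\,S^{1/2}MS^{-1/2}=I
        \iff
        N^TN=I .
\]
Hence \(K_S=S^{-1/2}\,SO(r)\,S^{1/2}\); here \(\det N=\det M=1\), so \(N\in SO(r)\).

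The group property can also be checked directly. \(I^TSI=S\). If \(M_1,M_2\in K_S\), then
\[
        (M_1M_2)^TS(M_1M_2)=M_2^T(M_1^TSM_1)M_2=M_2^TSM_2=S .
\]
Applying \(M^{-T}(\cdot)M^{-1}\) to \(M^TSM=S\) gives \((M^{-1})^TSM^{-1}=S\), so \(K_S\) is closed under inverses.

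For closedness, observe that \(M\mapsto M^TSM-S\) is continuous on \(\SL_r(\RR)\), and \(K_S\) is the preimage of \(\{0\}\). Compactness then follows from the conjugation identity: \(SO(r)\) is compact, and conjugation by the fixed matrix \(S^{1/2}\) is a homeomorphism of \(\SL_r(\RR)\).

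As a direct alternative for boundedness, let \(\lambda_{\min}\) and \(\lambda_{\max}\) denote the extreme eigenvalues of \(S\). The identity \(M^TSM=S\) gives
\[
        \lambda_{\min}\|Mv\|^2\le v^TM^TSMv=v^TSv\le\lambda_{\max}\|v\|^2 ,
\]
so \(\|M\|\le(\lambda_{\max}/\lambda_{\min})^{1/2}\). A closed and bounded subset of \(\SL_r(\RR)\) is compact, because \(\SL_r(\RR)\) is closed in the space of \(r\times r\) matrices.

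No step is a real obstacle. The only point needing care is that closedness is taken in \(\SL_r(\RR)\), and that topology agrees with the subspace topology from the matrix space, so Heine–Borel applies.
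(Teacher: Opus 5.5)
Your proof is correct and follows the paper's route. Conjugating by \(R=S^{1/2}\) identifies \(K_S\) with \(R^{-1}SO(r)R\), which gives compactness, and closedness follows from continuity of \(M\mapsto M^TSM\). Your direct subgroup check and the eigenvalue bound \(\|M\|\le(\lambda_{\max}/\lambda_{\min})^{1/2}\) are valid supplementary details that the paper leaves implicit.
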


\begin{proof}
Closedness is immediate.  If \(R=S^{1/2}\), then
\[
M^T S M=S
\quad\Longleftrightarrow\quad
(RMR^{-1})^T(RMR^{-1})=I.
\]
For \(M\in\SL_r(\RR)\), this identifies \(K_S\) with \(R^{-1}SO(r)R\), hence it
is compact.
\end{proof}

\begin{lemma}[Finite-union stabilizers]
\label{lem:closed-stabilizer-finite-union}
Let \(\mathcal L=\{L_1,\ldots,L_N\}\) be a finite family of nonzero proper
linear subspaces of \(\RR^r\), and set
\[
\operatorname{Stab}(\mathcal L)
:=
\left\{
M\in\SL_r(\RR):
M\left(\bigcup_{j=1}^N L_j\right)=\bigcup_{j=1}^N L_j
\right\}.
\]
Then \(\operatorname{Stab}(\mathcal L)\) is a closed subgroup of
\(\SL_r(\RR)\).
\end{lemma}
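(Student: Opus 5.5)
We need to show that $\operatorname{Stab}(\mathcal L)$ is a subgroup of $\SL_r(\RR)$ and that it is closed. Write $U:=\bigcup_{j=1}^N L_j$ throughout.

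\emph{Group property.} The identity clearly satisfies $IU=U$. If $M,M'\in\operatorname{Stab}(\mathcal L)$, then $MM'U=M U=U$, so the set is closed under products. For inverses, apply $M^{-1}$ to both sides of $MU=U$; this gives $U=M^{-1}U$, so $M^{-1}\in\operatorname{Stab}(\mathcal L)$.

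\emph{Closedness.} We first reduce to a one-sided inclusion. Suppose $M\in\SL_r(\RR)$ satisfies $MU\subset U$. Since $M$ is invertible, each $ML_j$ is a linear subspace of dimension $\dim L_j$. It is contained in the finite union $U$ of subspaces. A real vector space is not a finite union of proper subspaces, so $ML_j$ must lie inside a single $L_{\pi(j)}$.

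We next show that $\pi$ can be taken to be a permutation. Assume without loss of generality that the $L_j$ are pairwise distinct, and discard any $L_j$ contained in another $L_k$; this does not change $U$. Then each $L_j$ is a maximal element of $\mathcal L$ under inclusion.

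Order the subspaces by dimension. Since $M$ is injective and $ML_j\subset L_{\pi(j)}$, we get $\dim L_{\pi(j)}\ge\dim L_j$. The map $j\mapsto L_{\pi(j)}$ is injective on the maximal elements: if $ML_j$ and $ML_k$ lay in the same $L_i$, then $L_j+L_k\subset M^{-1}L_i$, which has dimension $\dim L_i$. Iterating this containment along $\pi$ on the finite set, and counting dimensions, forces $\pi$ to be a permutation with $\dim L_{\pi(j)}=\dim L_j$. Hence $ML_j=L_{\pi(j)}$ for each $j$, and therefore $MU=U$. So
\[
\operatorname{Stab}(\mathcal L)=\{M\in\SL_r(\RR): MU\subset U\}.
\]

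It remains to show that the inclusion condition is closed. Since $U$ is a finite union of closed subspaces, it is a closed subset of $\RR^r$. For each fixed $v\in U$, the map $M\mapsto Mv$ is continuous, so $\{M: Mv\in U\}$ is closed. The set $\{M: MU\subset U\}$ is the intersection over all $v\in U$ of these closed sets, and is therefore closed. Combined with the group property, $\operatorname{Stab}(\mathcal L)$ is a closed subgroup of $\SL_r(\RR)$.

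An alternative route avoids the dimension argument. For $M_k\to M$ with $M_kU=U$, pass to a subsequence on which the induced permutations of the finite set $\{L_j\}$ are constant, say equal to $\pi$. Then $M_kL_j=L_{\pi(j)}$ for all $k$. The set $\{M: ML_j\subset L_{\pi(j)}\}$ is closed, so $ML_j\subset L_{\pi(j)}$. Since $M$ is invertible and the dimensions agree, $ML_j=L_{\pi(j)}$, and hence $MU=U$.

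The main obstacle is the step from the inclusion $MU\subset U$ to the equality $MU=U$. The subsequence argument handles it most cleanly, so I would present that version, using the fact that only finitely many permutations of the $L_j$ can occur.
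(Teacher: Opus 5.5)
Your subsequence argument is correct, but it does more work than the paper's proof. The paper never upgrades $MU\subset U$ to $MU=U$ through the subspace structure. Since $\operatorname{Stab}(\mathcal L)$ is a group and inversion is continuous on $\SL_r(\RR)$, $M_n\to M$ also gives $M_n^{-1}\to M^{-1}$ with $M_n^{-1}U=U$. Closedness of $U$ then yields both $MU\subset U$ and $M^{-1}U\subset U$, hence $MU=U$. This uses only that $U$ is closed; it shows that the setwise stabilizer of any closed subset of $\RR^r$ is a closed subgroup. So the step you call the main obstacle never arises. What your route adds is structural information the lemma does not need. Every element of $\operatorname{Stab}(\mathcal L)$ permutes the inclusion-maximal members of $\mathcal L$, and, by your first route, for invertible $M$ the one-sided condition $MU\subset U$ already forces $MU=U$.

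If you keep your version, tighten two points.

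First, in the subsequence argument, ``the induced permutation'' only makes sense after you discard repeated and non-maximal $L_j$. If $L_1\subset L_2$, a matrix preserving $U=L_2$ need not permute $\{L_1,L_2\}$. You should also say why $M_kU=U$ forces $M_kL_j$ to equal some $L_i$. Since a real vector space is not a finite union of proper subspaces, $M_kL_j\subset L_i$ and $M_k^{-1}L_i\subset L_{i'}$ for some $i,i'$. Maximality then gives $i'=j$ and equality.

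Second, in your first route, the injectivity sentence proves nothing as written. The inclusion $L_j+L_k\subset M^{-1}L_i$ is no contradiction, e.g.\ when $L_j,L_k$ are two lines and $L_i$ is a plane. ``Iterating along $\pi$'' has to become an actual argument, for example:
\begin{itemize}
\item Dimensions are constant around each cycle of $\pi$, so $ML_c=L_{\pi(c)}$ for every periodic $c$.
\item If some point were not periodic, take the last non-periodic point $j'$ on its forward orbit, so $\pi(j')=c$ is periodic.
\item With $c'$ the cycle predecessor of $c$, you get $L_{j'}\subset M^{-1}L_c=L_{c'}$ with $j'\neq c'$, contradicting maximality.
\end{itemize}
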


\begin{proof}
It is a subgroup.  If \(M_n\in\operatorname{Stab}(\mathcal L)\) and
\(M_n\to M\), then the finite union \(\bigcup_j L_j\) is closed, so
\(
        M\left(\bigcup_j L_j\right)\subset\bigcup_j L_j .
\)
Applying the same argument to \(M_n^{-1}\to M^{-1}\) gives the reverse
inclusion.  Thus \(M\in\operatorname{Stab}(\mathcal L)\).
\end{proof}

For the two obstruction arguments, fix a good \(\omega\).  Since \(G\) is
Polish, the supports are closed, and \(\Gamma_\ell:G^\ell\to\TT^d\) is
continuous, each \(\mathfrak P_\ell^\pm(\omega)\) is a closed separable subspace
of \(G^\ell\times G^\ell\).  Choose countable dense subsets
\(
        \mathfrak D_\ell^+(\omega)\subset\mathfrak P_\ell^+(\omega),
        \ 
        \mathfrak D_\ell^-(\omega)\subset\mathfrak P_\ell^-(\omega).
\)
These choices are pointwise in \(\omega\).  Countability is used only to form
intersections of sets of full Haar measure for the a.e. phase identities below; continuity of
\(Q_{\mathbf g,\mathbf h}^{\pm}(\theta)\) in the word-pair variables and
closedness of the relevant stabilizer then extend the conclusion to all
\(\mathfrak P_\ell^\pm(\omega)\).

\begin{proposition}[Environment-wise compact reductions force compact equal-total monodromy groups]
\label{prop:compact-reduction-controls-return-monodromy}
Assume that the cocycle admits an environment-wise compact reduction
\(H:\Omega\times\TT^d\longrightarrow\mathcal S_r\).
Then for \(\PP\)-a.e. \(\omega\) there exists a set of full Haar measure
\(\mathcal R_H(\omega)\subset\TT^d\) such that, for every
\(\theta\in\mathcal R_H(\omega)\),
\(
        \cM^{\mathrm{eq}}_{\omega,\theta}
        \subset K_{H(\omega,\theta)} .
\)
In particular, \(\cM^{\mathrm{eq}}_{\omega,\theta}\) is compact at every such
phase.
\end{proposition}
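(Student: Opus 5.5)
The plan is to iterate the one-step isometry identity \eqref{eq:compact-reduction-identity} along admissible words, and then show that every equal-total word quotient preserves the inner product \(H(\omega,\theta)\). Since the generators lie in the closed subgroup \(K_{H(\omega,\theta)}\) of Lemma~\ref{lem:closed-stabilizer-inner-product}, the closure of the group they generate does too, and compactness follows.

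\emph{Step 1: fix the good environments.} Let \(\Omega_0\) be the full-measure set from Definition~\ref{def:compact-reduction}. Replace it by \(\Omega_0':=\bigcap_{j\in\ZZ}\Theta^{-j}\Omega_0\), intersected with the set of Remark~\ref{rem:pointwise-support-monodromy}; this set is \(\Theta\)-invariant and has full measure. Fix \(\omega\in\Omega_0'\). For each \(\ell\) and each forward word \(\mathbf g\in\mathfrak W_\ell^+(\omega)\), apply the one-step identity at \(\Theta^j\omega\) with \(g_j\) and phase \(\theta+\Gamma_j(\mathbf g)\), and telescope. This gives
\[
\mathcal A^+_{\mathbf g}(\theta)^T H(\Theta^\ell\omega,\theta+\Gamma_\ell(\mathbf g))\mathcal A^+_{\mathbf g}(\theta)=H(\omega,\theta)
\]
for \(m\)-a.e. \(\theta\). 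This uses that translations preserve \(m\), so each exceptional null set pulls back to a null set. The analogous identity holds for backward words \(\mathbf g\in\mathfrak W_\ell^-(\omega)\), with source \((\Theta^{-\ell}\omega,\theta-\Gamma_\ell(\mathbf g))\) and target \((\omega,\theta)\).

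\emph{Step 2: equal-total quotients.} Take \((\mathbf g,\mathbf h)\in\mathfrak P_\ell^+(\omega)\). Both word maps land on the same fiber, with the same \(H(\Theta^\ell\omega,\theta+\Gamma_\ell)\). Comparing the two identities gives
\[
Q^{+}_{\mathbf g,\mathbf h}(\theta)^TH(\omega,\theta)Q^{+}_{\mathbf g,\mathbf h}(\theta)=H(\omega,\theta)
\]
for a.e. \(\theta\). The backward case is analogous: the common source fiber is \((\Theta^{-\ell}\omega,\theta-\Gamma_\ell)\), and the identity for \(\mathcal A^-_{\mathbf g}(\mathcal A^-_{\mathbf h})^{-1}\) follows in the same way.

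\emph{Step 3: from countably many pairs to all pairs (the main obstacle).} The null set in Step 2 depends on the word pair, and there are uncountably many pairs. The null sets also come from the a.e.-in-\(\theta\) clause in the definition of a compact reduction, which is taken per \(g\), so the support elements cannot all be handled at once. I would handle this with the countable dense sets \(\mathfrak D_\ell^\pm(\omega)\). Let \(\mathcal R_H(\omega)\) be the intersection over \(\ell\) and over pairs in \(\mathfrak D_\ell^\pm(\omega)\) of the full-measure phase sets from Step 2; this is still a set of full Haar measure.

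Now fix \(\theta\in\mathcal R_H(\omega)\). The map \((\mathbf g,\mathbf h)\mapsto Q^\pm_{\mathbf g,\mathbf h}(\theta)\) is continuous on \(\mathfrak P^\pm_\ell(\omega)\), because the group operations and the evaluation \(G\times\TT^d\to\SL_r(\RR)\) are continuous in the uniform topology. Every pair in \(\mathfrak P^\pm_\ell(\omega)\) is a limit of pairs in \(\mathfrak D^\pm_\ell(\omega)\), and \(K_{H(\omega,\theta)}\) is closed, so all quotients in \(\mathcal E^\pm_{\omega,\theta}\) lie in \(K_{H(\omega,\theta)}\). The key point that makes this work is that \(H\) is evaluated only at the fixed pair \((\omega,\theta)\). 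The continuity argument therefore never requires continuity of \(H\) in \(\theta\), which we do not have since \(H\) is merely measurable.

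\emph{Step 4: conclude.} \(K_{H(\omega,\theta)}\) is a closed subgroup, so it contains the closed subgroup generated by \(\mathcal E^+_{\omega,\theta}\cup\mathcal E^-_{\omega,\theta}\), namely \(\cM^{\mathrm{eq}}_{\omega,\theta}\). It is compact by Lemma~\ref{lem:closed-stabilizer-inner-product}, so \(\cM^{\mathrm{eq}}_{\omega,\theta}\) is compact for every \(\theta\in\mathcal R_H(\omega)\).
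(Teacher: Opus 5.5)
Your proposal is correct and matches the paper's proof essentially step for step. You restrict to a \(\Theta\)-invariant full-measure core, telescope the reduction identity along forward and backward admissible words, and compare the two identities for an equal-total pair to get \(Q^{\pm}_{\mathbf g,\mathbf h}(\theta)\in K_{H(\omega,\theta)}\) for phases outside a countable union of null sets indexed by \(\mathfrak D_\ell^\pm(\omega)\); you then extend to all pairs by continuity in the word-pair variables and closedness of \(K_{H(\omega,\theta)}\), which needs no regularity of \(H\) in the phase because \(H\) is evaluated only at the fixed base point \((\omega,\theta)\).
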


\begin{proof}
Let \(\Omega_H^*\) be the \(\Theta\)-invariant full-measure core obtained by
intersecting the compact-reduction domain over all \(\Theta\)-iterates and with
\(\Omega_{\mathrm{eq}}\) from
Lemma~\ref{lem:phase-lower-chabauty-monodromy}.  Fix
\(\omega\in\Omega_H^*\) and choose the dense sets
\(\mathfrak D_\ell^\pm(\omega)\) above.

Iterating \eqref{eq:compact-reduction-identity} along a fixed forward word gives,
for \(m\)-a.e. \(\theta\),
\[
(\mathcal A_{\mathbf g}^{+}(\theta))^T
H(\Theta^\ell\omega,\theta+\Gamma_\ell(\mathbf g))
\mathcal A_{\mathbf g}^{+}(\theta)
=H(\omega,\theta).
\]
Along a fixed backward word it gives, for \(m\)-a.e. \(\theta\),
\[
(\mathcal A_{\mathbf g}^{-}(\theta))^T
H(\omega,\theta)
\mathcal A_{\mathbf g}^{-}(\theta)
=H(\Theta^{-\ell}\omega,\theta-\Gamma_\ell(\mathbf g)).
\]
Intersect the corresponding phase sets of full Haar measure over all component words that
occur in the countable dense pairs, and call the resulting set
\(\mathcal R_H(\omega)\).

Fix \(\theta\in\mathcal R_H(\omega)\) and write \(S:=H(\omega,\theta)\).  For a
forward dense pair \((\mathbf g,\mathbf h)\in\mathfrak D_\ell^+(\omega)\), put
\[
        A=\mathcal A_{\mathbf g}^{+}(\theta),
        \qquad
        B=\mathcal A_{\mathbf h}^{+}(\theta),
        \qquad
        T=H(\Theta^\ell\omega,\theta+\Gamma_\ell(\mathbf g)).
\]
The common forward admissible support-word space gives the same source and target environments,
and the two word maps are evaluated at the same phase \(\theta\), so they
share their source vector fiber.  The equal-total condition identifies their
target phase, so they also share their target vector fiber.  Hence \(A^TTA=S\) and \(B^TTB=S\).  Therefore
\(
        (A^{-1}B)^T S(A^{-1}B)=S.
\)
Thus every dense forward environment-matched equal-total word quotient lies in
\(K_S\).

For a backward dense pair, with
\(A=\mathcal A_{\mathbf g}^{-}(\theta)\) and
\(B=\mathcal A_{\mathbf h}^{-}(\theta)\), the common backward admissible support-word space
gives the same source and target environments, and both word maps are evaluated
at the same phase \(\theta\), so they share their target vector fiber.  The
equal-total condition identifies their source phase, so they also share their
source vector fiber.  Thus the identities have the form
\(A^TSA=T\) and \(B^TSB=T\) with the same source value \(T\).  Hence
\(
        (AB^{-1})^T S(AB^{-1})=S,
\)
so every dense backward environment-matched equal-total word quotient also lies in
\(K_S\).

By continuity in the word-pair variables and closedness of \(K_S\), every
environment-matched equal-total word quotient in
\(\mathcal E_{\omega,\theta}^+\cup\mathcal E_{\omega,\theta}^-\) lies
in \(K_S\).  Since \(K_S\) is a closed subgroup,
\(\cM^{\mathrm{eq}}_{\omega,\theta}\subset K_S\).
Lemma~\ref{lem:closed-stabilizer-inner-product} gives compactness of \(K_S\).
\end{proof}

\begin{proposition}[Finite support-invariant subbundle families constrain equal-total monodromy groups to finite-union stabilizers]
\label{prop:finite-family-controls-return-monodromy}
Assume that
\(\mathcal L_1,\ldots,\mathcal L_N
\subset\Omega\times\TT^d\times\RR^r\)
form a finite support-invariant family of nonzero proper measurable subbundles.  Then for \(\PP\)-a.e.
\(\omega\) there exists a set of full Haar measure
\(\mathcal R_{\mathcal L}(\omega)\subset\TT^d\) such that, for every
\(\theta\in\mathcal R_{\mathcal L}(\omega)\),
\(
\cM^{\mathrm{eq}}_{\omega,\theta}
\subset
\operatorname{Stab}
\bigl(
\{\mathcal L_{1,\theta}^{\omega},\ldots,
\mathcal L_{N,\theta}^{\omega}\}
\bigr).
\)
Consequently, \(\cM^{\mathrm{eq}}_{\omega,\theta}\) preserves the finite union
\(\bigcup_{j=1}^N\mathcal L_{j,\theta}^{\omega}\) and is not strongly
irreducible.
\end{proposition}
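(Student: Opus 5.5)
I will mirror the proof of Proposition~\ref{prop:compact-reduction-controls-return-monodromy}, replacing the quadratic-form identity by the set-valued transport of the finite union. Write \(U^\omega_\theta:=\bigcup_{j=1}^N\mathcal L^\omega_{j,\theta}\). First I form the \(\Theta\)-invariant full-measure core \(\Omega^*_{\mathcal L}\) by intersecting the support-invariance domain \(\Omega_0\) of Definition~\ref{def:environment-wise-strong-irreducibility} over all \(\Theta\)-iterates, together with \(\Omega_{\mathrm{eq}}\) from Lemma~\ref{lem:phase-lower-chabauty-monodromy}. Then I fix \(\omega\in\Omega^*_{\mathcal L}\) and the countable dense sets \(\mathfrak D^\pm_\ell(\omega)\).

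The first step is to iterate the one-step identity along a fixed word. For a forward word \(\mathbf g\in\mathfrak W^+_\ell(\omega)\) with \(g_j\in\supp\nu^{\Theta^j\omega}\), apply the identity at environment \(\Theta^j\omega\) and phase \(\theta+\Gamma_j(\mathbf g)\). Translation invariance of \(m\) makes each of these a full-measure condition in \(\theta\), so composing gives
\(\mathcal A^+_{\mathbf g}(\theta)U^\omega_\theta=U^{\Theta^\ell\omega}_{\theta+\Gamma_\ell(\mathbf g)}\) for \(m\)-a.e. \(\theta\). For a backward word \(\mathbf g\in\mathfrak W^-_\ell(\omega)\), apply the identity at environments \(\Theta^{-\ell+j}\omega\) and phases \(\theta-\Gamma_\ell(\mathbf g)+\Gamma_j(\mathbf g)\). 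This gives \(\mathcal A^-_{\mathbf g}(\theta)U^{\Theta^{-\ell}\omega}_{\theta-\Gamma_\ell(\mathbf g)}=U^\omega_\theta\) for \(m\)-a.e. \(\theta\). Let \(\mathcal R_{\mathcal L}(\omega)\) be the intersection of these full-measure phase sets over all component words occurring in the countable families \(\mathfrak D^\pm_\ell(\omega)\), \(\ell\ge1\). It has full Haar measure.

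Now fix \(\theta\in\mathcal R_{\mathcal L}(\omega)\) and a forward dense pair \((\mathbf g,\mathbf h)\). Since \(\Gamma_\ell(\mathbf g)=\Gamma_\ell(\mathbf h)\), both word maps send \(U^\omega_\theta\) onto the same set \(U^{\Theta^\ell\omega}_{\theta+\Gamma_\ell(\mathbf g)}\). Hence \(Q^+_{\mathbf g,\mathbf h}(\theta)\) maps \(U^\omega_\theta\) onto itself. In the backward case, both word maps send the common source set \(U^{\Theta^{-\ell}\omega}_{\theta-\Gamma_\ell(\mathbf g)}\) onto \(U^\omega_\theta\), so \(Q^-_{\mathbf g,\mathbf h}(\theta)\) maps \(U^\omega_\theta\) onto itself.

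Next I extend this from the dense pairs to all of \(\mathfrak P^\pm_\ell(\omega)\). This is the step where some care is needed, because the a.e. identities were only secured for countably many words. For fixed \(\theta\), the map \((\mathbf g,\mathbf h)\mapsto Q^\pm_{\mathbf g,\mathbf h}(\theta)\) is continuous on \(\mathfrak P^\pm_\ell(\omega)\): it is a finite product of matrix evaluations \(\mathcal A(g_i)(\theta+\Gamma_i)\), and evaluation \(G\times\TT^d\to\SL_r(\RR)\) is continuous under the uniform topology. Lemma~\ref{lem:closed-stabilizer-finite-union} shows \(\operatorname{Stab}(\{\mathcal L^\omega_{j,\theta}\})\) is closed, so every element of \(\mathcal E^\pm_{\omega,\theta}\) lies in it.

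Finally, because the stabilizer is a closed subgroup, the closed generated group \(\cM^{\mathrm{eq}}_{\omega,\theta}\) is contained in it. The union consists of finitely many nonzero proper subspaces, so by definition \(\cM^{\mathrm{eq}}_{\omega,\theta}\) is not strongly irreducible.
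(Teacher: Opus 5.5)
Your proposal is correct and follows essentially the same route as the paper's proof. Both use a $\Theta$-invariant full-measure core, iterate the a.e.\ transport of the finite union along the countably many component words of the dense pairs, and use the equal-total condition to match target (forward) or source (backward) unions. Both then extend to all of $\mathfrak P^\pm_\ell(\omega)$ by continuity of $Q^\pm_{\mathbf g,\mathbf h}(\theta)$ in the word pair and closedness of $\operatorname{Stab}$. The paper also intersects with the phase set where the fibers are nonzero and proper; this is harmless and in fact automatic from the subbundle definition, so omitting it costs you nothing.
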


\begin{proof}
Let \(\Omega_{\mathcal L}^*\) be the \(\Theta\)-invariant full-measure core
obtained by intersecting the support-invariance domain of the subbundle family over all
\(\Theta\)-iterates and with \(\Omega_{\mathrm{eq}}\).  Fix
\(\omega\in\Omega_{\mathcal L}^*\) and choose the
dense sets \(\mathfrak D_\ell^\pm(\omega)\).

For each fixed admissible support word, iterating the support-invariance relation
gives, for
\(m\)-a.e. \(\theta\),
\[
\mathcal A_{\mathbf g}^{+}(\theta)
\left(\bigcup_{j=1}^N\mathcal L_{j,\theta}^{\omega}\right)
=
\bigcup_{j=1}^N
\mathcal L_{j,\theta+\Gamma_\ell(\mathbf g)}^{\Theta^\ell\omega},
\]
and
\[
\mathcal A_{\mathbf g}^{-}(\theta)
\left(\bigcup_{j=1}^N
\mathcal L_{j,\theta-\Gamma_\ell(\mathbf g)}^{\Theta^{-\ell}\omega}\right)
=
\bigcup_{j=1}^N\mathcal L_{j,\theta}^{\omega}.
\]
Intersect the phase sets of full Haar measure over the component words in all dense pairs,
and also the phase set where the subbundle fibers are nonzero and proper; denote the
result by \(\mathcal R_{\mathcal L}(\omega)\).

Fix \(\theta\in\mathcal R_{\mathcal L}(\omega)\), and set
\[
        U_{\omega,\theta}:=\bigcup_{j=1}^N\mathcal L_{j,\theta}^{\omega},
        \qquad
        \mathscr L_{\omega,\theta}:=
        \{\mathcal L_{1,\theta}^{\omega},\ldots,
        \mathcal L_{N,\theta}^{\omega}\}.
\]
The stabilizer \(\operatorname{Stab}(\mathscr L_{\omega,\theta})\) is closed by
Lemma~\ref{lem:closed-stabilizer-finite-union}.

For a forward dense pair \((\mathbf g,\mathbf h)\in\mathfrak D_\ell^+(\omega)\),
the common forward admissible support-word space gives the same source and target environments,
and the two word maps are evaluated at the same phase \(\theta\), so they
share their source phase.  The equal-total condition identifies the target
phase.  Hence the target union is the same for \(\mathbf g\) and
\(\mathbf h\), and
\(
        (\mathcal A_{\mathbf g}^{+}(\theta))^{-1}
        \mathcal A_{\mathbf h}^{+}(\theta)U_{\omega,\theta}
        =U_{\omega,\theta}.
\)
For a backward dense pair, the common backward admissible support-word space gives the same
source and target environments, and the two word maps are evaluated at the same
phase \(\theta\), so they share their target phase.  The equal-total condition
identifies the source phase.  Hence the source union is the same, and
\(
        \mathcal A_{\mathbf g}^{-}(\theta)
        (\mathcal A_{\mathbf h}^{-}(\theta))^{-1}U_{\omega,\theta}
        =U_{\omega,\theta}.
\)
Thus all dense environment-matched equal-total word quotients lie in
\(\operatorname{Stab}(\mathscr L_{\omega,\theta})\).  Continuity in the word-pair
variables and closedness of the stabilizer extend this to all
environment-matched equal-total word quotients, and taking the closed
generated subgroup gives
\(
\cM^{\mathrm{eq}}_{\omega,\theta}
\subset
\operatorname{Stab}(\mathscr L_{\omega,\theta}).
\)
Since \(U_{\omega,\theta}\) is a finite union of nonzero proper subspaces, this
environment-matched equal-total monodromy group is not strongly irreducible.
\end{proof}

\begin{theorem}[Exclusion criterion from environment-matched equal-total monodromy groups]
\label{thm:monodromy-criterion-full}
Assume that there exist Borel sets
\(\Omega_{\mathrm{nc}},\Omega_{\mathrm{si}}\subset\Omega\) with
\(
        \PP(\Omega_{\mathrm{nc}})>0, \PP(\Omega_{\mathrm{si}})>0,
\)
such that:
\begin{enumerate}[label=(M\arabic*)]
    \item for every \(\omega\in\Omega_{\mathrm{nc}}\), there exists a phase
    \(\theta_\omega^{\mathrm{nc}}\in\TT^d\) such that
    \(\cM^{\mathrm{eq}}_{\omega,\theta_\omega^{\mathrm{nc}}}\) is stably
    noncompact;
    \item for every \(\omega\in\Omega_{\mathrm{si}}\), there exists a phase
    \(\theta_\omega^{\mathrm{si}}\in\TT^d\) such that
    \(\cM^{\mathrm{eq}}_{\omega,\theta_\omega^{\mathrm{si}}}\) is stably
    strongly irreducible.
\end{enumerate}
Then the cocycle is
environment-wise noncompact and environment-wise strongly irreducible.
\end{theorem}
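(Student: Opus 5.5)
We argue by contradiction, treating the two conclusions separately; each follows the same pattern.

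For noncompactness, suppose the cocycle admits an environment-wise compact reduction \(H\). By Proposition~\ref{prop:compact-reduction-controls-return-monodromy}, there is a full-measure set \(\Omega_H\) such that, for every \(\omega\in\Omega_H\), the group \(\cM^{\mathrm{eq}}_{\omega,\theta}\) is compact for all \(\theta\) in a set \(\mathcal R_H(\omega)\) of full Haar measure. Since \(\PP(\Omega_{\mathrm{nc}})>0\), we may pick \(\omega\in\Omega_{\mathrm{nc}}\cap\Omega_H\cap\Omega_{\mathrm{eq}}\). Set \(\theta_0:=\theta^{\mathrm{nc}}_\omega\), and let \(\mathcal U\) be a lower-Chabauty neighborhood of \(\cM^{\mathrm{eq}}_{\omega,\theta_0}\) consisting only of noncompact closed subgroups. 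By Lemma~\ref{lem:phase-lower-chabauty-monodromy}, the map \(\theta\mapsto\cM^{\mathrm{eq}}_{\omega,\theta}\) is lower-Chabauty continuous. Hence there is an open phase neighborhood \(V\ni\theta_0\) with \(\cM^{\mathrm{eq}}_{\omega,\theta}\in\mathcal U\) for all \(\theta\in V\).

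The set \(V\) has positive Haar measure, so \(V\cap\mathcal R_H(\omega)\neq\varnothing\). At any \(\theta\) in this intersection, \(\cM^{\mathrm{eq}}_{\omega,\theta}\) is compact by the choice of \(\mathcal R_H(\omega)\), yet noncompact because it lies in \(\mathcal U\). This contradiction shows the cocycle is environment-wise noncompact.

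For strong irreducibility, suppose there is a finite support-invariant family of nonzero proper measurable subbundles. Proposition~\ref{prop:finite-family-controls-return-monodromy} supplies, for a.e.\ \(\omega\), a full-measure phase set \(\mathcal R_{\mathcal L}(\omega)\) on which \(\cM^{\mathrm{eq}}_{\omega,\theta}\) is not strongly irreducible. The argument then repeats with \(\Omega_{\mathrm{si}}\), \(\theta^{\mathrm{si}}_\omega\), and a lower-Chabauty neighborhood consisting only of strongly irreducible closed subgroups. Lower-Chabauty continuity again places \(\cM^{\mathrm{eq}}_{\omega,\theta}\) in that neighborhood for all \(\theta\) in an open set, which meets \(\mathcal R_{\mathcal L}(\omega)\). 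At such a phase the group is both strongly irreducible and not strongly irreducible, a contradiction.

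The one point requiring care is making the full-measure environment sets compatible. The sets \(\Omega_H^*\) and \(\Omega_{\mathcal L}^*\) from those propositions are full-measure and are already intersected with \(\Omega_{\mathrm{eq}}\), so they meet any set of positive measure. Everything else is the observation that a nonempty open subset of \(\TT^d\) cannot be Haar-null.
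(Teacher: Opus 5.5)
Your proof is correct and follows the paper's argument essentially step for step. You intersect the positive-measure set \(\Omega_{\mathrm{nc}}\) (resp.\ \(\Omega_{\mathrm{si}}\)) with the full-measure regularity sets, use lower-Chabauty continuity in \(\theta\) to keep \(\cM^{\mathrm{eq}}_{\omega,\theta}\) in the stable neighborhood on an open phase set, and derive the contradiction with Proposition~\ref{prop:compact-reduction-controls-return-monodromy} (resp.\ Proposition~\ref{prop:finite-family-controls-return-monodromy}) at a phase in that open set's intersection with the full-Haar-measure regular set.
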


\begin{proof}
Suppose first that a compact reduction \(H\) exists.  Let \(\Omega_H^*\) be the
full-measure set from
Proposition~\ref{prop:compact-reduction-controls-return-monodromy}.  Since
\(\Omega_{\mathrm{eq}}\) and \(\Omega_H^*\) have full measure,
\(
\PP\bigl(
\Omega_{\mathrm{nc}}
\cap\Omega_{\mathrm{eq}}
\cap\Omega_H^*
\bigr)
=
\PP(\Omega_{\mathrm{nc}})>0.
\)
Choose
\(
\omega\in
\Omega_{\mathrm{nc}}
\cap\Omega_{\mathrm{eq}}
\cap\Omega_H^*,
\ 
\theta_0:=\theta_\omega^{\mathrm{nc}}.
\)
Stable noncompactness gives a lower-Chabauty neighborhood
\(\mathcal O_{\mathrm{nc}}\) of
\(\cM^{\mathrm{eq}}_{\omega,\theta_0}\) consisting only of noncompact closed
subgroups.  By Lemma~\ref{lem:phase-lower-chabauty-monodromy}, for \(\theta\) in
some nonempty open neighborhood \(V\) of \(\theta_0\),
\(\cM^{\mathrm{eq}}_{\omega,\theta}\in\mathcal O_{\mathrm{nc}}\).
Since Haar measure has full support, choose
\(\theta\in V\cap\mathcal R_H(\omega)\).  Then
\(\cM^{\mathrm{eq}}_{\omega,\theta}\) is noncompact, while
Proposition~\ref{prop:compact-reduction-controls-return-monodromy} gives
\(\cM^{\mathrm{eq}}_{\omega,\theta}\subset K_{H(\omega,\theta)}\)
with compact right-hand side.  This contradiction excludes compact reductions;
the cocycle is environment-wise noncompact.

Suppose next that a finite support-invariant family of nonzero proper
measurable subbundles exists.
Let \(\Omega_{\mathcal L}^*\) be the full-measure set from
Proposition~\ref{prop:finite-family-controls-return-monodromy}.  Since
\(\Omega_{\mathrm{eq}}\) and \(\Omega_{\mathcal L}^*\) have full measure,
\(
\PP\bigl(
\Omega_{\mathrm{si}}
\cap\Omega_{\mathrm{eq}}
\cap\Omega_{\mathcal L}^*
\bigr)
=
\PP(\Omega_{\mathrm{si}})>0.
\)
Choose
\(
\omega\in
\Omega_{\mathrm{si}}
\cap\Omega_{\mathrm{eq}}
\cap\Omega_{\mathcal L}^*,\ 
\theta_0:=\theta_\omega^{\mathrm{si}}.
\)
Stable strong irreducibility gives a lower-Chabauty neighborhood
\(\mathcal O_{\mathrm{si}}\) of
\(\cM^{\mathrm{eq}}_{\omega,\theta_0}\) consisting only of strongly irreducible
closed subgroups.  Again Lemma~\ref{lem:phase-lower-chabauty-monodromy} gives a
nonempty open phase neighborhood \(V\) of \(\theta_0\) such that
\(\cM^{\mathrm{eq}}_{\omega,\theta}\in\mathcal O_{\mathrm{si}} \ (\theta\in V)\).
Choose \(\theta\in V\cap\mathcal R_{\mathcal L}(\omega)\).  At this phase the
environment-matched equal-total monodromy group is strongly irreducible by membership in
\(\mathcal O_{\mathrm{si}}\), but
Proposition~\ref{prop:finite-family-controls-return-monodromy} places it in the
stabilizer of a finite union of nonzero proper subspaces.  This contradiction
excludes finite support-invariant subbundle families; the cocycle is
environment-wise strongly
irreducible.

In each contradiction, the putative degeneracy supplies a full-measure regular
environment set, while only one environment from its intersection with the
corresponding environment set is used.  Positive measure of each environment set is
therefore sufficient; mere nonemptiness would not ensure such an intersection.
\end{proof}

\begin{theorem}[Positivity from environment-matched equal-total monodromy groups]
\label{thm:positivity-via-return-monodromy}
Assume that the mixed base system
\((\Omega\times G^{\NN}\times\TT^d,\mathfrak F,\rho)\)
is ergodic and that the logarithmic integrability condition
\eqref{eq:log-integrability-G} holds.  Assume also that there exist Borel sets
\(\Omega_{\mathrm{nc}},\Omega_{\mathrm{si}}\subset\Omega\) with
\(
        \PP(\Omega_{\mathrm{nc}})>0, \PP(\Omega_{\mathrm{si}})>0,
\)
such that:
\begin{enumerate}[label=(M\arabic*)]
    \item for every \(\omega\in\Omega_{\mathrm{nc}}\), there exists a phase
    \(\theta_\omega^{\mathrm{nc}}\in\TT^d\) such that
    \(\cM^{\mathrm{eq}}_{\omega,\theta_\omega^{\mathrm{nc}}}\) is stably
    noncompact;
    \item for every \(\omega\in\Omega_{\mathrm{si}}\), there exists a phase
    \(\theta_\omega^{\mathrm{si}}\in\TT^d\) such that
    \(\cM^{\mathrm{eq}}_{\omega,\theta_\omega^{\mathrm{si}}}\) is stably
    strongly irreducible.
\end{enumerate}
The two environment sets and the two chosen phases need not coincide,
and no measurability of either phase choice is assumed.  Then
\(
        L_1(\rho)>0.
\)
\end{theorem}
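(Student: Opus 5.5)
This is a direct composition of two earlier results. Hypotheses (M1) and (M2) are exactly those of Theorem~\ref{thm:monodromy-criterion-full}. The standing assumptions, mixed-base ergodicity and \eqref{eq:log-integrability-G}, are exactly those of Theorem~\ref{thm:env-positivity}. So the plan is to chain the exclusion criterion into the environment-wise Furstenberg positivity criterion.

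First, apply Theorem~\ref{thm:monodromy-criterion-full} to the Borel sets \(\Omega_{\mathrm{nc}}\) and \(\Omega_{\mathrm{si}}\) with the given phases. That theorem uses no ergodicity or integrability, only the pointwise support constructions. Its conclusion is that the cocycle is environment-wise noncompact and environment-wise strongly irreducible.

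The hidden content is in the proof of that theorem. A putative compact reduction, through Proposition~\ref{prop:compact-reduction-controls-return-monodromy}, places \(\cM^{\mathrm{eq}}_{\omega,\theta}\) inside the compact group \(K_{H(\omega,\theta)}\) for Haar-a.e.\ \(\theta\). Lower-Chabauty continuity, Lemma~\ref{lem:phase-lower-chabauty-monodromy}, propagates stable noncompactness from \(\theta_\omega^{\mathrm{nc}}\) to an open phase neighborhood. That neighborhood meets the full-measure regular set, which gives the contradiction. The strong-irreducibility case is identical, using Proposition~\ref{prop:finite-family-controls-return-monodromy}.

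I would also check that the arbitrary phase choices cause no trouble. Only one environment and one nearby regular phase are used, so no measurability of \(\omega\mapsto\theta_\omega\) is needed. Positivity of \(\PP(\Omega_{\mathrm{nc}})\) and \(\PP(\Omega_{\mathrm{si}})\) guarantees that these sets meet the full-measure cores \(\Omega_H^*\), \(\Omega_{\mathcal L}^*\) and \(\Omega_{\mathrm{eq}}\).

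Second, with both nondegeneracy properties established, invoke Theorem~\ref{thm:env-positivity} under mixed-base ergodicity and \eqref{eq:log-integrability-G}, which gives \(L_1(\rho)>0\). Internally, the contrapositive chain runs as follows:
\begin{itemize}
\item If \(L_1(\rho)=0\), Theorem~\ref{thm:random-cdk-exactness} yields an exact equivariant family.
\item Theorem~\ref{thm:degeneracy-exact} then yields a compact reduction or a finite support-invariant subbundle family, both of which are now excluded.
\item Since \(\log\|\mathcal A_n\|\ge0\), the exponent is nonnegative, so it is strictly positive.
\end{itemize}

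There is no real obstacle at this level. The only point to verify is that the degeneracy objects produced in Section~\ref{sec:exact-equivariance} are the ones Theorem~\ref{thm:monodromy-criterion-full} rules out. Both use the same support-level definitions (Definitions~\ref{def:compact-reduction} and~\ref{def:environment-wise-strong-irreducibility}), with full-measure environment sets and a.e.-phase identities, so the two theorems compose without further work.
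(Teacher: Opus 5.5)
Your proposal is correct and matches the paper's proof: Theorem~\ref{thm:monodromy-criterion-full} turns (M1)--(M2) into environment-wise noncompactness and environment-wise strong irreducibility. Theorem~\ref{thm:env-positivity} then gives \(L_1(\rho)>0\) under mixed-base ergodicity and \eqref{eq:log-integrability-G}.
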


\begin{proof}
By Theorem~\ref{thm:monodromy-criterion-full}, the cocycle is
environment-wise noncompact and environment-wise strongly irreducible.  The
mixed-base ergodicity and logarithmic integrability assumptions then allow us
to apply Theorem~\ref{thm:env-positivity}, which yields \(L_1(\rho)>0\).
\end{proof}

\section{Stationarily Varying Random--Quasiperiodic Schr\"odinger Cocycles}
\label{sec:schrodinger}

This final section specializes the moving-law model to stationarily varying
random--quasiperiodic Schr\"odinger cocycles.  We first define the
Schr\"odinger parameter kernel, the induced one-step cocycle laws, and their
frequency marginals.  We then give two mixed-base ergodicity criteria.  Finally,
same-frequency distinct-noise pairs in adjacent noisy environments supply two
one-step word quotients that serve as unipotent monodromy witnesses.
Theorem~\ref{thm:positivity-via-return-monodromy} then gives positivity at
every energy.  For standard background on one-dimensional Schr\"odinger
transfer matrices and Lyapunov exponents, see \cite{BL85,Damanik17}.

\subsection{Schr\"odinger parameter kernels, one-step cocycle laws, and frequency marginals}
\label{sec:abstract-schrodinger-setup}

Fix a continuous potential \(v\in C^0(\TT^d,\RR)\).  For
\(E\in\RR\), set
\[
S_{E,v}(\theta):=
\begin{pmatrix}
 v(\theta)-E & -1\\
 1 & 0
\end{pmatrix}
\in \SL_2(\RR),
\]
and, for \(\zeta\in\RR\), let
\[
P(\zeta):=
\begin{pmatrix}
 1 & \zeta\\
 0 & 1
\end{pmatrix},
\qquad
A_{E,\zeta}(\theta):=P(\zeta)S_{E,v}(\theta)
=
\begin{pmatrix}
 v(\theta)+\zeta-E & -1\\
 1 & 0
\end{pmatrix}.
\]
Thus \(A_{E,\zeta}\in C^0(\TT^d,\SL_2(\RR))\).  With the present sign convention,
\(A_{E,\zeta_n}(\theta_n)\) is the transfer matrix for the recursion
\[
        \binom{u_{n+1}}{u_n}
        =A_{E,\zeta_n}(\theta_n)\binom{u_n}{u_{n-1}},
        \qquad
        u_{n+1}+u_{n-1}+(E-v(\theta_n)-\zeta_n)u_n=0 .
\]
When \(E\) is fixed, we write \(A_\zeta\) for \(A_{E,\zeta}\).

Let \(\omega\mapsto\kappa^\omega\in\Prob_c(\TT^d\times\RR)\) be a measurable
Schr\"odinger parameter kernel.  For each energy \(E\), define the induced Schr\"odinger one-step cocycle law on \(\coc(d,2)\) by
\begin{equation}\label{eq:schrod-law}
\nu_E^\omega
:=
\int_{\TT^d\times\RR}
\delta_{(\alpha,A_{E,\zeta})}
\,d\kappa^\omega(\alpha,\zeta)
\in \Prob_c\bigl(\coc(d,2)\bigr).
\end{equation}
For \(g_{\alpha,\zeta}:=(\alpha,A_{E,\zeta})\), the vector-fiber matrix is
\(\mathcal A(g_{\alpha,\zeta})(\theta) =A_{E,\zeta}(\theta) =P(\zeta)S_{E,v}(\theta)\).
Let \(\rho_E\) be the mixed invariant measure associated with \(\nu_E^\bullet\).  We call
\(L_1(E):=L_1(\rho_E)\) the \emph{top Lyapunov exponent at energy \(E\)}.

Throughout this section, after changing the kernel on a null set if necessary,
we assume a uniform noise-support bound: there is \(R>0\) such that
\begin{equation}\label{eq:schrod-bounded-support}
        \supp\kappa^\omega\subset \TT^d\times[-R,R]
        \qquad
        \text{for all }\omega\in\Omega .
\end{equation}
Since \(v\) is continuous and \(\zeta\) is uniformly bounded, the matrices
\(A_{E,\zeta}(\theta)\) are uniformly bounded in \((\zeta,\theta)\) for each fixed
\(E\).  Hence the logarithmic integrability condition
\eqref{eq:log-integrability-G} holds for the one-step cocycle laws \(\nu_E^\omega\).

Let \(\pi_{\TT^d}:\TT^d\times\RR\to\TT^d\) be the first-coordinate projection and
set
\(\mu^\omega:=(\pi_{\TT^d})_*\kappa^\omega\).
This frequency marginal is precisely the translation marginal of the
Schr\"odinger one-step cocycle law \(\nu_E^\omega\).  It determines the
translation factor because the variable \(\zeta\) changes only the vector-fiber
matrix.

\subsection{Mixed-base ergodicity criteria for frequency marginals}

The next two criteria provide the mixed-base ergodicity input for the Schr\"odinger
application.  They are stated directly for \(\mu^\omega\), because
Theorem~\ref{thm:ergodicity-random-environment} depends on a one-step cocycle law only through its translation marginal.

\begin{proposition}[Mixed-base ergodicity from Fourier contraction]
\label{prop:random-env-support-ergodicity}
Suppose that, for every \(k\in\ZZ^d\setminus\{0\}\),
\(\PP\Bigl\{ \omega: |\widehat{\mu^\omega}(k)|<1 \Bigr\}>0\).
Then the mixed base system associated with the translation marginals
\(\mu^\omega\) is ergodic.
\end{proposition}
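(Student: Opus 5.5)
We verify item (4) of Theorem~\ref{thm:ergodicity-random-environment}; the equivalence (4)\(\Leftrightarrow\)(1) then gives ergodicity of the mixed base. Since that theorem depends on the one-step law only through its translation marginal, this suffices for every energy \(E\) at once.

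Fix \(k\in\ZZ^d\setminus\{0\}\) and suppose \(u\in L^\infty(\Omega,\PP)\) satisfies
\(u(\omega)=\widehat{\mu^\omega}(k)\,u(\Theta\omega)\) for \(\PP\)-a.e. \(\omega\). Write \(a(\omega):=\widehat{\mu^\omega}(k)\), so \(|a|\le 1\). Taking absolute values gives \(|u|=|a|\,|u\circ\Theta|\le|u\circ\Theta|\) a.e.

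Next we use invariance. Since \(\Theta_*\PP=\PP\), we have \(\int|u\circ\Theta|\,d\PP=\int|u|\,d\PP\). The nonnegative function \(|u\circ\Theta|-|u|\) has zero integral, so \(|u|=|u\circ\Theta|\) a.e. Then \(|u|\) is \(\Theta\)-invariant, and by ergodicity of \(\Theta\) it equals a constant \(c\ge0\) a.e.

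Now suppose \(c>0\). Dividing gives \(|a(\omega)|=1\) for \(\PP\)-a.e. \(\omega\). This contradicts the hypothesis that \(\{\omega:|\widehat{\mu^\omega}(k)|<1\}\) has positive measure. Hence \(c=0\), so \(u=0\) a.e. and item (4) holds for this \(k\). Since \(k\neq0\) was arbitrary, (4) holds for all nonzero modes, and the theorem gives ergodicity.

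The only delicate point is measurability. The map \(\omega\mapsto\widehat{\mu^\omega}(k)\) is measurable because \(\omega\mapsto\kappa^\omega\) is a measurable kernel and \(e_k\) is bounded and continuous. Consequently the set in the hypothesis is measurable and the a.e. manipulations above are legitimate. Beyond this, the argument is the same absolute-value and invariance step already used in the proof of (4)\(\Leftrightarrow\)(5), so no step should present a real obstacle.
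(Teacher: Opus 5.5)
Your proposal is correct and follows essentially the same argument as the paper. You compare moduli to get $|u|\le|u\circ\Theta|$, force equality by $\Theta$-invariance of $\PP$ (you integrate in $L^1$, the paper compares $L^2$ norms), and use ergodicity of $\Theta$ to make $|u|$ constant (the paper instead notes that $\{|u|>0\}$ is invariant); this gives $|\widehat{\mu^\omega}(k)|=1$ a.e., contradicting the hypothesis, so item (4) of Theorem~\ref{thm:ergodicity-random-environment} applies.
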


\begin{proof}
Fix \(k\in\ZZ^d\setminus\{0\}\), put
\(c(\omega):=\widehat{\mu^\omega}(k)\), and suppose
\(u(\omega)=c(\omega)u(\Theta\omega) \ \PP\text{-a.e.}\)
with \(u\in L^\infty(\Omega,\PP)\).  Since \(|c|\le1\),
\(|u(\omega)|\le |u(\Theta\omega)|\).  The two sides have the same \(L^2\)-norm,
because \(\Theta\) preserves \(\PP\), and therefore
\(|u(\omega)|=|u(\Theta\omega)| \ \PP\text{-a.e.}\)
The set \(B:=\{\omega:|u(\omega)|>0\}\) is \(\Theta\)-invariant modulo null sets.
If \(u\neq0\), ergodicity of \(\Theta\) gives \(\PP(B)=1\), and the identity
\(|u(\omega)|=|c(\omega)|\,|u(\Theta\omega)|\)
forces \(|c(\omega)|=1\) for \(\PP\)-a.e. \(\omega\), contradicting the hypothesis
for this \(k\).  Hence \(u=0\).  Since this holds for every nonzero \(k\),
Theorem~\ref{thm:ergodicity-random-environment}, item \textup{(4)}, applies.
\end{proof}

\begin{proposition}[Mixed-base ergodicity from a common rationally independent support frequency]
\label{prop:weak-mixing-fixed-support-frequency}
Assume that \(\Theta\) is weakly mixing.  Let \(\alpha_*\in\TT^d\) be rationally
independent, i.e.
\(e_k(\alpha_*)\neq1 \ \forall k\in\ZZ^d\setminus\{0\}\),
and suppose that
\(\alpha_*\in\supp\mu^\omega \ \text{for }\PP\text{-a.e. }\omega\).
Then the mixed base system associated with the translation marginals
\(\mu^\omega\) is ergodic.
\end{proposition}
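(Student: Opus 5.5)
The plan is to verify item~\textup{(4)} of Theorem~\ref{thm:ergodicity-random-environment}, following the same modulus argument as in Proposition~\ref{prop:random-env-support-ergodicity}, and then to replace the Fourier-contraction hypothesis by a rigidity-plus-weak-mixing argument.

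\emph{Step 1: reduce to unimodular coefficients.} Fix \(k\in\ZZ^d\setminus\{0\}\) and put \(c(\omega):=\widehat{\mu^\omega}(k)\). Suppose \(u\in L^\infty(\Omega,\PP)\) is nonzero and satisfies \(u(\omega)=c(\omega)u(\Theta\omega)\) a.e. Since \(|c|\le1\), we get \(|u|\le|u\circ\Theta|\). Invariance of \(\PP\) then forces \(|u|=|u\circ\Theta|\) a.e. Ergodicity of \(\Theta\), which follows from weak mixing, shows that \(|u|\) is a nonzero constant, and after normalization \(|u|=1\) a.e. Consequently \(|c(\omega)|=1\) for \(\PP\)-a.e.\ \(\omega\).

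\emph{Step 2: the coefficient is a constant.} This is the key new step. If \(\bigl|\int e_k\,d\mu^\omega\bigr|=1\) for a probability measure \(\mu^\omega\), then equality in the triangle inequality gives \(e_k=c(\omega)\) for \(\mu^\omega\)-a.e.\ \(\alpha\). The set \(\{\alpha: e_k(\alpha)=c(\omega)\}\) is closed because \(e_k\) is continuous, and it has full \(\mu^\omega\)-measure, so it contains \(\supp\mu^\omega\). Using \(\alpha_*\in\supp\mu^\omega\) for a.e.\ \(\omega\), we obtain \(c(\omega)=e_k(\alpha_*)=:\lambda\) a.e. Rational independence of \(\alpha_*\) gives \(\lambda\neq1\), and \(|\lambda|=1\).

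\emph{Step 3: contradiction with weak mixing.} The relation now reads \(u\circ\Theta=\bar\lambda\,u\) a.e., with \(u\) nonzero and \(\bar\lambda\neq1\). Thus \(u\) is an \(L^2\) eigenfunction of the Koopman operator of \(\Theta\) with a nontrivial eigenvalue; such a \(u\) is nonconstant, since a constant eigenfunction would force \(\bar\lambda=1\). Weak mixing excludes nonconstant eigenfunctions, so we have a contradiction. Hence \(u=0\) for every \(k\neq0\), item~\textup{(4)} holds, and Theorem~\ref{thm:ergodicity-random-environment} yields ergodicity of the mixed base.

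The only delicate point is Step~2. There the topological-support hypothesis must be converted into a pointwise identity on the support, and this uses continuity of \(e_k\) together with closedness of the level set. The remaining steps are routine.
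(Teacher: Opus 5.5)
Your proof is correct and follows essentially the same route as the paper: you use the modulus/invariance argument to force $|c|=1$ a.e., then equality in the triangle inequality together with closedness of the level set of $e_k$ to get $c\equiv e_k(\alpha_*)$, and finally weak mixing to exclude the resulting Koopman eigenfunction. The only difference is cosmetic: you argue by contradiction from $u\neq0$ to obtain $|c|=1$ a.e., whereas the paper splits into the cases $\PP\{|c|<1\}>0$ and $|c|=1$ a.e.
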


\begin{proof}
Fix \(k\in\ZZ^d\setminus\{0\}\), set
\(c(\omega):=\widehat{\mu^\omega}(k)\), and consider an obstruction
\(u\in L^\infty(\Omega,\PP)\) satisfying
\(u(\omega)=c(\omega)u(\Theta\omega)\).  If \(\PP\{|c|<1\}>0\), the argument in the proof of
Proposition~\ref{prop:random-env-support-ergodicity} gives \(u=0\).  It remains
to consider the case \(|c|=1\) a.e.  Since \(|e_k|=1\), equality in the triangle
inequality implies that \(e_k\) is constant \(\mu^\omega\)-a.e.; by continuity it is
constant on \(\supp\mu^\omega\), for a.e. \(\omega\).  The common support frequency \(\alpha_*\) then gives
\(c(\omega)=e_k(\alpha_*) \  \PP\text{-a.e.}\)
The obstruction equation becomes
\(u\circ\Theta=\overline{e_k(\alpha_*)}\,u\).
Rational independence gives \(\overline{e_k(\alpha_*)}\neq1\), and weak mixing
excludes nonzero Koopman eigenfunctions with this eigenvalue.  Thus \(u=0\).
Theorem~\ref{thm:ergodicity-random-environment}, item \textup{(4)}, now gives
mixed-base ergodicity.
\end{proof}

\subsection{Same-frequency distinct-noise monodromy witnesses}

We now verify the group-level hypotheses in dimension two.  A same-frequency
distinct-noise pair at \(\omega\) gives a forward environment-matched
equal-total word quotient that is lower unipotent, while such a pair at
\(\Theta^{-1}\omega\) gives a backward word quotient that is upper unipotent.
Geometrically, these quotients are fiberwise return matrices, and together they
form a pair of monodromy witnesses.  Their generated group contains a
hyperbolic element, and a two-element lower-Chabauty criterion gives the stable
noncompactness and stable strong irreducibility required by
Theorem~\ref{thm:positivity-via-return-monodromy}.

\begin{lemma}[Two-element criterion for stable subgroup nondegeneracy in \(\SL_2\)]
\label{lem:stable-sl2-witness}
Let \(H\in\operatorname{Sub}(\SL_2(\RR))\).  Suppose that \(H\) contains elements
\(W,T\) such that \(W\) is hyperbolic and, writing
\(F_W\subset\mathbb P^1\) for its two fixed projective lines, the projective map
\(T\) preserves no nonempty subset of \(F_W\).  Then \(H\) is stably noncompact
and stably strongly irreducible in the lower-Chabauty topology.
\end{lemma}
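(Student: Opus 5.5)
The plan is to use the lower-Chabauty neighborhood built from the compact set \(K=\{W,T\}\subset H\). By Definition~\ref{def:lower-chabauty}, a closed subgroup \(H'\) lies in \(\mathcal U_{K,\delta}(H)\) exactly when it contains a compact \(K'\) with \(d_{\mathrm H}(K,K')<\delta\). For \(\delta\) small this forces \(H'\) to contain elements \(W'\) and \(T'\) with \(d(W',W)<\delta\) and \(d(T',T)<\delta\): each point of \(K\) is within \(\delta\) of some point of \(K'\). So it suffices to show that both hypotheses on the pair \((W,T)\) are open conditions, and that any closed subgroup containing such a perturbed pair is noncompact and strongly irreducible.

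\textbf{Step 1 (openness).} Hyperbolicity \(|\operatorname{tr}W|>2\) is open. On the open set of hyperbolic matrices, the two eigenlines, i.e.\ the attracting and repelling fixed points in \(\mathbb P^1\), depend continuously on the matrix. The condition that \(T\) preserves no nonempty subset of \(F_W=\{x_+,x_-\}\) is equivalent to \(Tx_+\notin\{x_+\}\), \(Tx_-\notin\{x_-\}\), and \(T\{x_+,x_-\}\neq\{x_+,x_-\}\). This amounts to finitely many non-equalities among continuous functions of \((W,T)\), namely \(Tx_+\ne x_+\), \(Tx_-\ne x_-\), and not both \(Tx_+=x_-\) and \(Tx_-=x_+\). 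I would phrase it as the union of the open conditions \(\{Tx_+\notin F_W\}\) and \(\{Tx_-\notin F_W\}\) together with \(\{Tx_\pm\ne x_\pm\}\). Hence it persists for \((W',T')\) near \((W,T)\). This gives \(\delta>0\).

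\textbf{Step 2 (noncompactness).} If \(W'\in H'\) is hyperbolic, then \(\|W'^n\|\to\infty\), so \(H'\) is unbounded and therefore not compact.

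\textbf{Step 3 (strong irreducibility).} In \(\SL_2(\RR)\), a finite union of nonzero proper subspaces is a finite set \(F\subset\mathbb P^1\). Suppose \(H'\) preserves \(F\). Then \(W'\) permutes \(F\), so some power \(W'^k\) fixes every point of \(F\). Since \(W'^k\) is hyperbolic, its only fixed points are \(x_\pm'\), so \(F\subset F_{W'}\) and \(F\) is nonempty. Then \(T'\) preserves \(F\), a nonempty subset of \(F_{W'}\), contradicting Step~1.

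The main, though mild, obstacle is the continuity of the eigenlines and the correct case analysis of ``preserves no nonempty subset'', including the case where \(T\) swaps the two fixed points. Once the perturbed pair is controlled, the group-theoretic steps are immediate.
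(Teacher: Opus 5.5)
Your proposal is correct and follows the paper's proof. Both use the lower-Chabauty neighborhood $\mathcal U_{K,\delta}(H)$ with $K=\{W,T\}$, openness of hyperbolicity and of the ``preserves no nonempty subset of $F_W$'' condition via continuity of the fixed points, and unbounded powers of $W'$ for noncompactness. Strong irreducibility also comes out the same way: any finite invariant set is forced into $F_{W'}$, which $T'$ cannot preserve. Your argument that some power $W'^k$ fixes $F$ pointwise simply makes explicit the paper's remark that a hyperbolic projective map has no finite orbits outside its two fixed lines.
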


\begin{proof}
Fix the compatible group metric \(d_G\) used in
Definition~\ref{def:lower-chabauty}, and fix a compatible metric
\(d_{\mathbb P}\) on \(\mathbb P^1\).  On the hyperbolic locus, the unordered
two-point set
\(A\mapsto F_A\) varies continuously in the Hausdorff metric
\(d_{\mathrm H}^{\mathbb P}\).

The assumption on \(T\) means that \(T\) fixes neither point of \(F_W\) and does
not map \(F_W\) onto itself.  Hence
\[
\Delta:=
\min\left\{
        \min_{x\in F_W} d_{\mathbb P}(Tx,x),
        d_{\mathrm H}^{\mathbb P}(T(F_W),F_W)
\right\}>0.
\]
By continuity, there are neighborhoods \(\mathcal N_W\) of \(W\) and
\(\mathcal N_T\) of \(T\) such that every \(W'\in\mathcal N_W\) is hyperbolic and
\(T'\) preserves no nonempty subset of \(F_{W'}\) whenever
\((W',T')\in\mathcal N_W\times\mathcal N_T\).
Choose \(\delta>0\) with
the \(d_G\)-balls \(B_G(W,\delta)\subset\mathcal N_W\) and
\(B_G(T,\delta)\subset\mathcal N_T\), set \(K:=\{W,T\}\subset H\), and let
\(\mathcal U:=\mathcal U_{K,\delta}(H)\) be the lower-Chabauty neighborhood from
Definition~\ref{def:lower-chabauty}.  If \(H'\in\mathcal U\), then \(H'\) contains
\(W',T'\) with
\(d_G(W',W)<\delta, \  d_G(T',T)<\delta\).
Thus \(W'\) is hyperbolic and \(T'\) preserves no nonempty subset of \(F_{W'}\).
The powers of \(W'\) are unbounded, so \(H'\) is noncompact.

Suppose that \(H'\) preserved a nonempty finite set
\(F\subset\mathbb P^1\).  Since \(W'F=F\), and a hyperbolic projective map has no
finite orbits outside its two fixed lines, one has \(F\subset F_{W'}\).  But
\(T'F=F\), so \(T'\) preserves a nonempty subset of \(F_{W'}\), a contradiction.
Therefore \(H'\) preserves no finite subset of \(\mathbb P^1\), equivalently no
finite union of nonzero proper subspaces of \(\RR^2\).  Hence \(H'\) is strongly
irreducible.  The same neighborhood \(\mathcal U\) gives both stability
conclusions.
\end{proof}

\begin{proposition}[Positivity from same-frequency distinct-noise pairs]
\label{prop:schrodinger-random-frequency-noise-same-frequency}
Assume that \(\Theta\) is weakly mixing and that there exists a rationally
independent \(\alpha_*\in\TT^d\) such that
\(\alpha_*\in\supp\mu^\omega \ \text{for }\PP\text{-a.e. }\omega\).
Suppose also that there is a Borel set \(\Omega_{\mathrm{sf}}\subset\Omega\)
such that, for every \(\omega\in\Omega_{\mathrm{sf}}\), the support of
\(\kappa^\omega\) contains the same-frequency distinct-noise pair
\(
(\alpha_\omega,\zeta_{1,\omega}),
(\alpha_\omega,\zeta_{2,\omega}),
\zeta_{1,\omega}\neq\zeta_{2,\omega},
\)
where \(\alpha_\omega\) is the environment-dependent comparison frequency and
is not assumed to equal the common rationally independent support frequency
\(\alpha_*\).  Assume in addition the adjacent noisy-environment overlap
condition
\(
\PP\left(
\Omega_{\mathrm{sf}}
\cap
\{\omega\in\Omega:\Theta^{-1}\omega\in\Omega_{\mathrm{sf}}\}
\right)>0.
\)
Then, for every
energy \(E\in\RR\),
\(
L_1(E)>0.
\)
\end{proposition}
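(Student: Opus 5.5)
The plan is to apply Theorem~\ref{thm:positivity-via-return-monodromy} at each fixed energy \(E\), with \(\Omega_{\mathrm{nc}}=\Omega_{\mathrm{si}}:=\Omega_{\mathrm{sf}}\cap\Theta(\Omega_{\mathrm{sf}})\), which has positive measure by the overlap hypothesis. The analytic inputs are already in place. Logarithmic integrability follows from the uniform noise bound \eqref{eq:schrod-bounded-support}. Mixed-base ergodicity follows from Proposition~\ref{prop:weak-mixing-fixed-support-frequency}, since the translation marginal of \(\nu_E^\omega\) is \(\mu^\omega\), independently of \(E\). It therefore remains to verify (M1) and (M2) at every \(\omega\) in this set, using the two-element criterion Lemma~\ref{lem:stable-sl2-witness}. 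The phase may be taken arbitrary, say \(\theta=0\).

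\textbf{Step 1: the two witnesses.} Fix \(\omega\) with \(\omega,\Theta^{-1}\omega\in\Omega_{\mathrm{sf}}\). The pair \((\alpha_\omega,A_{E,\zeta_1}),(\alpha_\omega,A_{E,\zeta_2})\in\supp\nu_E^\omega\) is a forward equal-total pair of length one. The support of \(\nu_E^\omega\) is the image of \(\supp\kappa^\omega\) under the continuous map \((\alpha,\zeta)\mapsto(\alpha,A_{E,\zeta})\), and since \(\kappa^\omega\) has compact support this image is closed. By Remark~\ref{rem:elementary-generators} it contributes
\[
L:=A_{\zeta_1}(\theta)^{-1}A_{\zeta_2}(\theta)=S(\theta)^{-1}P(\zeta_2-\zeta_1)S(\theta).
\]
A direct computation shows that \(S^{-1}P(t)S\) is lower unipotent, \(\begin{pmatrix}1&0\\-t&1\end{pmatrix}\) up to sign conventions, and in particular it is independent of \(\theta\), \(v\) and \(E\). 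Likewise, with \(s:=\zeta_2'-\zeta_1'\neq0\) coming from the pair at \(\Theta^{-1}\omega\), the backward quotient
\[
U:=A_{\zeta_1'}(\theta-\beta)A_{\zeta_2'}(\theta-\beta)^{-1}=P(\zeta_1'-\zeta_2')=P(-s)
\]
is upper unipotent. Thus \(L=\begin{pmatrix}1&0\\ a&1\end{pmatrix}\) and \(U=\begin{pmatrix}1&b\\0&1\end{pmatrix}\) with \(a,b\neq0\), and both lie in \(\cM^{\mathrm{eq}}_{\omega,\theta}\).

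\textbf{Step 2: a hyperbolic element and a twisting element.} Choose an integer \(n\) with \(|2+n ab|>2\). This is possible because \(ab\neq0\): for example \(n\) large with sign making \(nab>0\). Then \(W:=U^nL\) has trace \(2+nab\), so \(W\) is hyperbolic and lies in the group. Its fixed lines \(F_W\) contain neither \(e_1\) nor \(e_2\), because a fixed line equal to \(e_1\) or \(e_2\) would force \(nb\) or \(a\) to vanish. Take \(T:=U\). Its only fixed projective point is \(e_1\notin F_W\), so \(T\) fixes no point of \(F_W\). If \(T\) swapped the two points of \(F_W\), then \(T^2=U^2\) would fix both points, which is impossible since \(U^2\) is a nontrivial parabolic. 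Hence \(T\) preserves no nonempty subset of \(F_W\), and Lemma~\ref{lem:stable-sl2-witness} gives stable noncompactness and stable strong irreducibility of \(\cM^{\mathrm{eq}}_{\omega,\theta}\).

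\textbf{Step 3: conclusion and obstacle.} Steps 1 and 2 establish (M1) and (M2) on a set of positive measure, so Theorem~\ref{thm:positivity-via-return-monodromy} yields \(L_1(E)>0\). The main obstacle is Step 1, specifically the bookkeeping showing that the forward and backward quotients land in the same group at the same base point \((\omega,\theta)\). This depends on the overlap condition, and on the backward word map for \(\mathfrak W_1^-(\omega)\) evaluating at \(\theta-\beta\), so that the \(S\)-factors cancel and \(U\) is a pure upper unipotent independent of \(v\). I would also check the sign computation for \(S^{-1}P(t)S\) carefully; whatever the sign, nonvanishing of \(a\) and \(b\) is all that Step 2 needs.
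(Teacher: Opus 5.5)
Your proposal is correct and follows the paper's proof essentially step for step. You use the same positive-measure set \(\Omega_{\mathrm{sf}}\cap\Theta(\Omega_{\mathrm{sf}})\); the paper additionally intersects with \(\Omega_{\mathrm{eq}}\), which is harmless. You also use the same lower- and upper-unipotent one-step witnesses at \(\omega\) and \(\Theta^{-1}\omega\), a hyperbolic product \(U^{\pm1}L\), and Lemma~\ref{lem:stable-sl2-witness}. The only cosmetic difference is the twisting element: the paper takes \(T=L\), whereas your choice \(T=U\), with the \(U^2\) argument ruling out a swap of the two fixed lines, works equally well.
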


\begin{proof}
Fix \(E\in\RR\) and write \(A_\zeta=A_{E,\zeta}\).  By
Proposition~\ref{prop:weak-mixing-fixed-support-frequency}, the mixed base for
\(\nu_E^\omega\) is ergodic.  The support bound
\eqref{eq:schrod-bounded-support} gives the logarithmic integrability condition
\eqref{eq:log-integrability-G}.  It remains only to verify the stable noncompactness and stable strong
irreducibility hypotheses for the equal-total monodromy groups in
Theorem~\ref{thm:positivity-via-return-monodromy}.

Let \(\Omega_{\mathrm{eq}}\) be the full-measure set from
Lemma~\ref{lem:phase-lower-chabauty-monodromy}, and set
\[
        \Omega_0
        :=
        \Omega_{\mathrm{sf}}
        \cap
        \{\omega\in\Omega:\Theta^{-1}\omega\in\Omega_{\mathrm{sf}}\}
        \cap
        \Omega_{\mathrm{eq}}.
\]
This is a Borel set and
\(
\PP(\Omega_0)>0,
\)
because \(\Omega_{\mathrm{eq}}\) has full measure.  Fix \(\omega\in\Omega_0\).
The current environment \(\omega\) supplies a same-frequency distinct-noise pair
\(
        (\alpha,\zeta_1),(\alpha,\zeta_2)\in\supp\kappa^\omega,
        \ \zeta_1\neq\zeta_2,
\)
and the preceding environment \(\Theta^{-1}\omega\) supplies another such pair
\(
        (\alpha_-,\zeta^-_1),(\alpha_-,\zeta^-_2)
        \in\supp\kappa^{\Theta^{-1}\omega},
        \ \zeta^-_1\neq\zeta^-_2.
\)
By continuity of \((\alpha,\zeta)\mapsto(\alpha,A_{E,\zeta})\), the corresponding cocycle elements lie in
\(\supp\nu_E^\omega\) and \(\supp\nu_E^{\Theta^{-1}\omega}\).  The elementary one-step word quotients in
Remark~\ref{rem:elementary-generators} therefore give,
for every \(\theta\in\TT^d\),
\[
        A_{\zeta_1}(\theta)^{-1}A_{\zeta_2}(\theta)
        =
        \begin{pmatrix}1&0\\ \zeta_1-\zeta_2&1\end{pmatrix}
        \in\cM^{\mathrm{eq}}_{\omega,\theta},
\]
and
\[
        A_{\zeta^-_1}(\theta-\alpha_-)
        A_{\zeta^-_2}(\theta-\alpha_-)^{-1}
        =
        \begin{pmatrix}1&\zeta^-_1-\zeta^-_2\\ 0&1\end{pmatrix}
        \in\cM^{\mathrm{eq}}_{\omega,\theta}.
\]
These two word quotients are fiberwise return matrices and, together, form
a pair of monodromy witnesses.  Put \(\delta:=\zeta_1-\zeta_2\) and
\(u:=\zeta^-_1-\zeta^-_2\).  Then
\(\delta u\neq0\), and the group \(\cM^{\mathrm{eq}}_{\omega,\theta}\) contains
\[
        L:=\begin{pmatrix}1&0\\ \delta&1\end{pmatrix},
        \qquad
        U:=\begin{pmatrix}1&u\\ 0&1\end{pmatrix}.
\]
Choose \(\varepsilon\in\{1,-1\}\) so that \(\varepsilon u\delta>0\), and set
\(W:=U^\varepsilon L\).
Since \(\cM^{\mathrm{eq}}_{\omega,\theta}\) is a group, it contains \(W\).  Moreover
\(\operatorname{tr}W=2+\varepsilon u\delta=2+|u\delta|>2\),
so \(W\) is hyperbolic.  The lower unipotent \(L\) has unique fixed projective
line \([e_2]\), while
\(W e_2=U^\varepsilon L e_2=\varepsilon u e_1+e_2\)
is not a multiple of \(e_2\).  Hence \([e_2]\notin F_W\).  A nontrivial
parabolic projective transformation has no finite projective orbit outside its
unique fixed line, so \(L\) preserves no nonempty subset of \(F_W\).
Lemma~\ref{lem:stable-sl2-witness}, applied with
\(H=\cM^{\mathrm{eq}}_{\omega,\theta}\) and \(T=L\), shows that
\(\cM^{\mathrm{eq}}_{\omega,\theta}\) is stably noncompact and stably strongly
irreducible.

The preceding argument holds for every \(\theta\in\TT^d\) and every
\(\omega\in\Omega_0\).  Thus the stable noncompactness and stable strong irreducibility hypotheses
for the equal-total monodromy groups in
Theorem~\ref{thm:positivity-via-return-monodromy} hold with
\(
        \Omega_{\mathrm{nc}}=\Omega_{\mathrm{si}}=\Omega_0, \theta_\omega^{\mathrm{nc}}
        =\theta_\omega^{\mathrm{si}}
        =0_{\TT^d}.
\)
That theorem gives \(L_1(E)>0\).  Since \(E\) was arbitrary, the conclusion
holds for all energies.
\end{proof}

\begin{remark}[Relation with the fixed-law criteria of Cai--Duarte--Klein]
 The same-frequency distinct-noise criterion uses environment-matched
equal-total counterparts
of the one-step unipotent monodromy matrices used in
\cite[Propositions~4.1 and~4.4]{CDK23}.  If
\(\Omega=\{\ast\}\), \(\Theta=\mathrm{id}\), and
\(\kappa^\omega\equiv\eta\in\Prob_c(\TT^d\times\RR)\), the model specializes to
a fixed-law one and the environment-matching requirement becomes vacuous.  If \(\supp\eta\) contains
\((\alpha,\zeta_1)\) and \((\alpha,\zeta_2)\), with \(\alpha\) rationally
independent and \(\zeta_1\neq\zeta_2\), the mixed-base ergodicity input is supplied by
\(\alpha_*:=\alpha\), and the same one-step unipotent matrices are obtained.
Under the hypotheses of
\cite[Proposition~4.4]{CDK23}, these matrices reproduce its
Lyapunov-positivity argument.  When the frequency marginal is the single atom
\(\delta_\alpha\), the same matrices likewise reproduce the fixed-frequency
random-noise
positivity argument of \cite[Proposition~4.1]{CDK23}.  
\end{remark}

\end{document}